\title{On Kummer-like surfaces  attached to singularity and  modular forms}
\author{Atsuhira Nagano and Hironori Shiga}
\def\bigzerou{\smash{\lower1.7ex\hbox{\b 0}}}
\newtheorem{thm}{Theorem}[section]
\newtheorem{lem}{Lemma}[section]
\newtheorem{prop}{Proposition}[section]
\newtheorem{rem}{Remark}[section]
\newtheorem{cor}{Corollary}[section]
\def\comment#1{{ }}
\begin{document}
\maketitle
\setlength{\baselineskip}{13 pt}
 \renewcommand{\thefootnote}{\fnsymbol{footnote}}

\begin{abstract}
We study a family of lattice polarized $K3$ surfaces
which is an extension of the family of Kummer surfaces derived from principally polarized Abelian surfaces.
Our family has two special properties.
First, it is coming from a resolution of a simple $K3$ singularity.
Second, it has a natural parametrization by Hermitian modular forms of four complex variables.
In this paper, we show two results:
(1) We determine the transcendental lattice and the N\'eron-Severi lattice of a generic member of our family.
(2) We give a detailed description of the double covering structure associated with our $K3$ surfaces.
\end{abstract}

\footnote[0]{Keywords:  Kummer surfaces ; $K3$ singularities ; Hermitian modular forms.  }
\footnote[0]{Mathematics Subject Classification 2020:  Primary 14J28 ; Secondary    32S25, 11F11.}
\setlength{\baselineskip}{14 pt}

\section*{Introduction}
A Kummer surface ${\rm Kum}(\mathfrak{A})$ for  a principally polarized Abelian surface $\mathfrak{A}$  is an algebraic $K3$ surface with an explicit defining equation parametrized by Siegel modular forms of degree $2$ (see \cite{CD} Section 5.1). 
Let $\mathcal{G}_{\rm Kum}$ be the family of such Kummer surfaces.
The Picard number of a generic member of 
$\mathcal{G}_{\rm Kum}$ is $17$.
Since $\mathcal{G}_{\rm Kum}$ 
is important in various areas in  mathematics (for example, see \cite{BHPV} or \cite{PS}),
it is meaningful to extend $\mathcal{G}_{\rm Kum}$.
In this paper,
we study a family  $\mathcal{G}_1$,
which is a natural extension of $\mathcal{G}_{\rm Kum}$  as shown in the diagram (\ref{Diagram}).

Now, 
we give a brief summary of our family $\mathcal{G}_1$.
The family $\mathcal{G}_1$ has a particular property:
it is coming from a resolution of a simple $K3$ singularity (see Section 1).
It has the following explicit expression.
Letting $T$ be a Zariski open set of the weighted projective space $\mathbb{P}(4,6,10,12,18)$,
this family is given by
$
\pi_{\mathcal{G}_1} : \mathcal{G}_1=\{K(t) |\hspace{1mm} [t]=(t_4:t_6:t_{10}:t_{12}:t_{18})\in T\} \rightarrow T,
$
where $K(t)$ is an elliptic surface given by the Weierstrass equation
\begin{align}\label{K}
 K(t):   Z^2=Y^3+a_K(U)Y +b_K(U)  \text{ with } a_K(U)= \Big(t_4 + \frac{t_{10}}{U^2} \Big),  b_K(U)= \Big(  t_6 +U^2+ \frac{ t_{12}}{U^2}+\frac{t_{18}}{U^4} \Big),
 \end{align}
which is birationally equivalent to a $K3$ surface of Picard number $16$
 (for detail, see (\ref{E88}) and Section 3.2).
Indeed,
the subfamily $\pi_{\mathcal{G}_1}^{-1} \left(\{t_{18}=0\} \cap T \right)$ coincides with the family $\mathcal{G}_{{\rm Kum}}$.
So,
we regard $\mathcal{G}_1$ as an extension of $\mathcal{G}_{\rm Kum}$
and we call a member of $\mathcal{G}_1$ a Kummer-like surface.

In order to  study  $\mathcal{G}_1$, 
we  use another family
$
\pi_{\mathcal{F}_1} : \mathcal{F}_1=\{S(t)|\hspace{1mm} [t]\in T\} \rightarrow T
$
of elliptic surfaces,
where $S(t)$ is given by the Weierstrass equation 
\begin{align}\label{S}
S(t):  Z^2=Y^3+a_S(X)Y+b_S(X) \text{ with }a_S(X)= \Big(t_4 + \frac{t_{10}}{X} \Big), b_S(X)=\Big(  t_6 +X+ \frac{ t_{12}}{X}  +\frac{t_{18}}{X^2}\Big).
\end{align}
Here, $S(t)$ gives a $K3$ surface (for detail, see (\ref{S(t)}) and Section 3.2).
For a fixed point $[t] \in T$,
 $K(t)$ is a rational double covering of $S(t)$.
We call $S(t)$ the partner surface of $K(t)$
(for this terminology, see Remark \ref{RemPartner}).
The  surface $S(t)$ has a simple structure as a lattice polarized $K3$ surface
and the period mapping for $\mathcal{F}_1$ is studied in \cite{Na}.
Especially, the period mapping characterizes the tuple $t$ of the parameters.
Namely,
 $t_4, t_6, t_{10},t_{12},t_{18}$ 
 are regarded as  Hermitian modular forms for $SU(2,2)$ (see Theorem \ref{ThmModular}).

Our families $\mathcal{G}_1$ and $\mathcal{F}_1$
are extensions of already known families of $K3$ surfaces as in the 
following diagram.
\begin{align}\label{Diagram}
\xymatrix{
 \mathcal{G}_1 \ar[r]  \ar@{}[d]|{\bigcup}
& \mathcal{F}_1  \ar@{}[d]|{\bigcup} \\
 \mathcal{G}_{{\rm Kum}}  \ar[r] 
& \mathcal{F}_{{\rm CD}}    \\
}
\end{align}
Here, $\mathcal{F}_{\rm CD}$ is the family studied in \cite{CD} (see Section 5.1).
Also, the horizontal arrows are corresponding to the rational double coverings for each member.
We remark that the double covering induces a parametrization of $\mathcal{G}_1$ ($\mathcal{G}_{\rm Kum}$, resp.)
 by Hermitian (Siegel, resp.) modular forms
 derived from the period mapping of $\mathcal{F}_1$ ($\mathcal{F}_{\rm CD}$, resp.).

The first result of this paper is 
to show the precise lattice structure of our family $\mathcal{G}_1$.
We determine the transcendental lattice and the N\'eron-Severi lattice for $\mathcal{G}_1$ (Theorem \ref{ThmTrK(t)}).
In fact,
it is a non-trivial problem to determine them.
The situation is explained by the following facts:
\begin{itemize}

\vspace{-2mm}
\item The lattice structure of $\mathcal{F}_1$ is already known (see Section 2).
However, as mentioned at the beginning of Section 4,
the lattice structure of $\mathcal{G}_1$ is not directly calculated from that of $\mathcal{F}_1$.

\vspace{-2mm}
\item The N\'eron-Severi lattices for families coming from simple $K3$ singularities are studied by Belcastro \cite{B}.
However,
unfortunately,
her result for $\mathcal{G}_1$ is not correct (see Remark \ref{RemB}). 
\end{itemize}
\vspace{-2mm}
In Section 4,
we will give a geometric construction of $2$-cycles on a generic member of $\mathcal{G}_1$,
in order to determine the precise  structure of the transcendental lattice.
Also, by applying arithmetic properties of even lattices,
we will calculate the N\'eron-Severi lattice.

The second result  of this paper is 
to obtain a detailed description of  the double covering between $\mathcal{G}_1$ and $\mathcal{F}_1$.
The double covering between the subfamilies 
$\mathcal{G}_{\rm Kum}$ and $\mathcal{F}_{\rm CD}$
induces an interesting phenomenon called the Kummer sandwich (for detail, see Section 5).
This phenomenon is studied by many researchers (for example, see \cite{S}, \cite{Ma}, \cite{BMS} or \cite{MS}).
We will prove that
we cannot find a complete sandwich phenomenon
 for our families $\mathcal{G}_1$ and $\mathcal{F}_1$ (Theorem \ref{ThmNonExistence}).
In the meanwhile,
the restrictions of our expressions (\ref{K}) and (\ref{S}) to $t_{18}=0$
are useful to consider the Kummer sandwich between $\mathcal{G}_{\rm Kum}$ and $\mathcal{F}_{\rm CD}$ (see Section 5.1).
Furthermore,
we will see a handy description of a sandwich between the family $\mathcal{G}_{\rm MSY}$ of \cite{MSY} and the family $\mathcal{F}_{\rm CMS}$ of \cite{CMS} (see Section 5.3).

In Table 1,
we show the properties of our pair $(\mathcal{G}_1,\mathcal{F}_1)$
compared with those of other pairs which are already known.
Here, ${\rm Tr}$ means the transcendental lattice of a generic member.  
Also, $U$ is the unimodular hyperbolic lattice of rank $2$.
Moreover, for a lattice $\mathcal{L}$ with the intersection matrix $(c_{j,k})_{j,k}$ and $n\in \mathbb{Z}$,
we denote by $\mathcal{L}(n)$ the lattice given by the intersection matrix $n(c_{j,k})_{j,k}$.
\begin{table}[h]
\center
\begin{tabular}{cccccc}
\toprule
$(\mathcal{G},\mathcal{F})$ & $(\mathcal{G}_1,\mathcal{F}_1)$  & $(\mathcal{G}_{{\rm Kum}},\mathcal{F}_{{\rm CD}})$ &  $(\mathcal{G}_{{\rm MSY}}, \mathcal{F}_{{\rm CMS}})$  \\
 \midrule
${\rm Tr}$ for $\mathcal{G}$ 
     &  $U(2)^{\oplus 2}\oplus A_2(-2)$  &  $ U(2)^{\oplus 2}\oplus A_1(-2)$   & $ U(2)^{\oplus 2}\oplus A_1(-1) ^{\oplus 2} $   \vspace{1mm} \\
${\rm Tr}$ for $\mathcal{F}$ 
  &  $ U^{\oplus 2}\oplus A_2(-1)$ &   $ U^{\oplus 2}\oplus A_1(-1)$   & $ U^{\oplus 2} \oplus A_1(-1) ^{\oplus 2}$  \vspace{1mm} \\
Weights of parameters &  $4,6,10,12,18$  & $4,6,10,12$   &  $4,6,8,10,12$ &  \\
Sandwich &  only one side & both sides    & both sides   \\
 \bottomrule
\end{tabular}
\caption{Comparison of our pair $(\mathcal{G}_1,\mathcal{F}_1)$ with other pairs  }
\end{table}

Our family $\mathcal{G}_1$ must be one of the most reasonable extension of $\mathcal{G}_{\rm Kum}$.
It has several good properties.
The moduli space of $\mathcal{G}_1$ is a natural extension of that of the family of principally polarized Abelian surfaces.
Also, the Hermitian modular forms $t_j$ $(j\in \{4,6,10,12,18\})$ have  exact expression using theta functions on the symmetric domain for $SU(2,2)$ and invariants of a complex reflection group (see \cite{NS}).
Furthermore, it seems to the authors that  
the results of the recent work \cite{ILP},
in which motives of $K3$ surfaces are studied,
 are closely related to our surfaces  $K(t)$ and $S(t)$.
Hence, 
the authors expect that those properties of our families provide new merits for future research of arithmetic theory of $K3$ surfaces.

  \section{ $K3$ surfaces from  singularities}
 
 \subsection{Simple $K3$ singularities}
 
 In this subsection, we survey the concept and properties of simple $K3$ singularities.
 For detail, see \cite{IW}, \cite{W}, \cite{Y} and \cite{B}.

 Let $(\mathcal{X},x)$ be a normal isolated singularity in an analytic space $\mathcal{X}$ of three-dimension.
 Let $\rho:(\widetilde{\mathcal{X}},E) \rightarrow (\mathcal{X},x)$ be a good resolution, 
 where $E$ is the exceptional set.
 For any positive integer $m$,
 the plurigenera $\delta_m(\mathcal{X},x)$ is defined as
 ${\rm dim}_\mathbb{C} ( \Gamma(\mathcal{X}-\{x\},\mathcal{O}(mK^\circ))/L^{\frac{2}{m}} (\mathcal{X}-\{x\}))$.
 Here, 
 $K^\circ $ is the canonical bundle on $\mathcal{X}-\{x\}$
 and
 $L^{\frac{2}{m}}(\mathcal{X}-\{x\})$ is the set of holomorphic $m$-ple $3$-forms on $\mathcal{X}-\{x\}$ which are $L^{\frac{2}{m}}$-integrable at $x$.
 A singularity $(\mathcal{X},x)$ is said to be purely elliptic 
 if it holds $\delta_m(\mathcal{X},x)=1$ for any $m$.
 
 Suppose that $(\mathcal{X},x)$ is quasi-Gorenstein.
 Letting $E=\cup_i E_i$ be the irreducible decomposition,
 we have an expression in the form
 $
 K_{\widetilde{\mathcal{X}}}=\pi^* K_\mathcal{X} +\sum_{i\in I} m_i E_i -\sum_{j\in J} m_j E_j,
 $
 with
$m_i, m_j  \in \mathbb{Z}_{>0}$.
If $(\mathcal{X},x)$ is purely elliptic, 
we can prove that $m_j=1$ holds for any $j\in J$.
Now, setting $E_J=\sum_{j\in J} E_j$,
there is an unique $j\in\{0,1,2\}$ such that
$H^{2}(E_J,\mathcal{O}_J) \simeq H^{0,j}(E_J) \simeq \mathbb{C}$.
Such a singularity $(\mathcal{X},x)$ is said to be of $(0,j)$-type.
Now, we have the following result.

\begin{prop} (\cite{IW} Section III)
For a $3$-dimensional isolated singularity $(\mathcal{X},x)$,
the following two conditions are equivalent.

\noindent
(i) $(\mathcal{X},x)$ is Gorenstein, purely elliptic and of $(0,2)$-type,

\noindent
(ii) $(\mathcal{X},x)$ is quasi-Gorenstein and the exceptional divisor $E$ for any minimal resolution $\rho: (\widetilde{\mathcal{X}},E)\rightarrow (\mathcal{X},x)$ is a normal $K3$ surface. 
\end{prop}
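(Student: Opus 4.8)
\emph{Sketch of the argument.}
The plan is to prove the two implications separately. The common tools will be the relative canonical formula $K_{\widetilde{\mathcal X}}=\rho^{*}K_{\mathcal X}+\sum_{i\in I}m_iE_i-\sum_{j\in J}m_jE_j$ recalled above, adjunction on the exceptional divisor, and the standard expression of the plurigenera $\delta_m(\mathcal X,x)$ through the cohomology of powers of the dualizing sheaf of $E_J=\sum_{j\in J}E_j$. Since quasi-Gorenstein means $K_{\mathcal X}$ is Cartier, after shrinking $\mathcal X$ to a Stein neighbourhood of $x$ the line bundle $\rho^{*}K_{\mathcal X}$ is trivial, so $\rho^{*}K_{\mathcal X}|_{E}\cong\mathcal O_{E}$; this will be used tacitly throughout.

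For (ii) $\Rightarrow$ (i) I would start from the hypothesis that $E$ is a normal $K3$ surface for a minimal resolution $\rho$, so $E$ is irreducible with $\omega_E\cong\mathcal O_E$, $h^1(\mathcal O_E)=0$, $h^2(\mathcal O_E)=1$. Writing $K_{\widetilde{\mathcal X}}=\rho^{*}K_{\mathcal X}+aE$, adjunction gives $\mathcal O_E\cong\omega_E\cong(K_{\widetilde{\mathcal X}}+E)|_E\cong\mathcal O_E((a+1)E)|_E$; since $\mathrm{Pic}(E)$ is torsion free and $N_{E/\widetilde{\mathcal X}}=\mathcal O_E(E)|_E$ is not numerically trivial (as $E$ is contracted by $\rho$ to the point $x$), this forces $a=-1$, i.e. $E=E_J$ with $m_j=1$. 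Feeding $a=-1$ into the plurigenus formula gives $\delta_m(\mathcal X,x)=h^0(E,\omega_E^{\otimes m})=1$ for every $m$, so the singularity is purely elliptic, and since $H^2(E,\mathcal O_E)\cong\mathbb C$ is pure of Hodge type $(0,2)$ it is of $(0,2)$-type. Finally, $h^1(\mathcal O_E)=0$ and the analogous vanishing on the infinitesimal thickenings of $E$ give $R^1\rho_{*}\mathcal O_{\widetilde{\mathcal X}}=0$ by the theorem on formal functions, which for a $3$-dimensional normal isolated singularity is equivalent to being Cohen--Macaulay; hence $(\mathcal X,x)$ is Gorenstein.

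For (i) $\Rightarrow$ (ii), Gorenstein trivially gives quasi-Gorenstein, so the task is to identify $E_J$ (for a minimal resolution; the condition turns out to be independent of the choice) with a normal $K3$ surface. First I would use pure ellipticity to obtain $m_j=1$ for all $j\in J$, as recalled above, so that $E_J$ is reduced; restricting the canonical formula to $E_J$ and applying adjunction then gives $\omega_{E_J}\cong\mathcal O_{E_J}(\sum_{i\in I}m_iE_i|_{E_J})$, which is trivial for a minimal resolution, so $E_J$ is a connected reduced Gorenstein surface with trivial dualizing sheaf and $h^2(\mathcal O_{E_J})=1$. The $(0,2)$-type hypothesis means that the mixed Hodge structure on $H^2(E_J)$ has its $h^{0,2}$-part pure of weight $2$; together with the purely elliptic condition $\delta_m=h^0(E_J,\omega_{E_J}^{\otimes m})=1$ this forces $E_J$ to be irreducible and normal (a reducible or non-normal $E_J$ would shift part of this class to a lower weight), with $\omega_{E_J}\cong\mathcal O_{E_J}$; by the Enriques--Kodaira classification of its minimal resolution, $E_J$ is then a normal $K3$ surface or a smooth abelian surface. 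The abelian case has $h^1(\mathcal O_{E_J})=2$, which by the argument of the previous paragraph would force $R^1\rho_{*}\mathcal O_{\widetilde{\mathcal X}}\neq0$ and contradict Gorenstein (hence Cohen--Macaulay). Therefore $E_J$ is a normal $K3$ surface.

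The hard part will be the direction (i) $\Rightarrow$ (ii), in two respects: proving that $\omega_{E_J}$ is genuinely trivial rather than merely torsion in $\mathrm{Pic}(E_J)$ requires a careful analysis of the components $E_i$ ($i\in I$) of positive discrepancy and of their intersections with $E_J$ for a minimal resolution; and the step upgrading ``$E_J$ is a reduced surface with trivial dualizing sheaf of $(0,2)$-type'' to ``$E_J$ is a normal $K3$ surface'' is precisely where the normality and isolatedness of $(\mathcal X,x)$, rather than purely numerical data, enter, and where one must set up carefully the mixed Hodge structure on the cohomology of the possibly reducible exceptional divisor. The routine ingredient, which I would simply quote from \cite{IW}, \cite{W}, \cite{Y}, is the exact formula for $\delta_m$ in terms of $h^0(E_J,\omega_{E_J}^{\otimes m})$.
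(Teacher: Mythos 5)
The paper does not prove this proposition: it is quoted from Ishii--Watanabe (\cite{IW}, Section III) as background for the definition of a simple $K3$ singularity, so there is no in-paper argument to compare yours against. Judged on its own, your sketch follows the route one would expect (and the route the cited literature takes): the relative canonical formula, adjunction along the exceptional divisor, the expression of $\delta_m$ through $h^0(E_J,\omega_{E_J}^{\otimes m})$, and the mixed Hodge structure on $H^2(E_J)$. The direction (ii) $\Rightarrow$ (i) is essentially sound: the discrepancy computation forcing $a=-1$ is correct, and the Cohen--Macaulay step via $R^1\rho_*\mathcal O_{\widetilde{\mathcal X}}=0$ works, though you should note that the vanishing on the thickenings $nE$ reduces to $H^1(E,\mathcal O_E(-kE)|_E)=0$ for $k\ge 1$, which needs the nefness and bigness of $-E|_E$ on the contracted divisor together with Kawamata--Viehweg (or Serre duality on the $K3$); it is not automatic from $h^1(\mathcal O_E)=0$ alone.

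The genuine gap is in (i) $\Rightarrow$ (ii), at the step you parenthesize as ``a reducible or non-normal $E_J$ would shift part of this class to a lower weight.'' That is not true as stated: if $E_J$ were, say, the union of a $K3$ component and a rational component meeting along a curve, the $(0,2)$-class would still live in $\mathrm{Gr}^W_2 H^2(E_J)$, so the $(0,2)$-type hypothesis by itself does not exclude reducible or non-normal configurations. What the $(0,2)$-condition actually yields is that \emph{some} irreducible component of $E_J$ has a resolution with $p_g\ge 1$; eliminating the remaining components, proving normality, and ruling out the abelian alternative is exactly where the minimality of the resolution, the Gorenstein hypothesis and contraction arguments genuinely enter, and this is the substantive content of \cite{IW} rather than a formal consequence of the weight filtration. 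You flag this as the hard part, which is honest, but as written the sketch replaces that hard part with an incorrect one-line justification; a self-contained proof would have to reproduce the Ishii--Watanabe analysis of the essential components of $E_J$ (or simply cite it, as the paper does).
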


If $(\mathcal{X},x)$ satisfies the conditions of the above proposition,
it is called a simple $K3$ singularity.

In the following,
we assume that 
a $K3$ simple singularity $(\mathcal{X},x)$ is given by a hypersurface singularity.
Let   $\mathcal{X}$ be a hypersurface 
$
\mathcal{X}=\{(\zeta_1,\zeta_2,\zeta_3,\zeta_4)\in \mathbb{C}^4| F(\zeta_1,\zeta_2,\zeta_3,\zeta_4)=0\}
$
where $F$ is a non-degenerate polynomial given by
\begin{align} \label{HyperSurface}
F(\zeta_1,\zeta_2,\zeta_3,\zeta_4)=\sum_{(p_1,p_2,p_3,p_4)} \lambda_{p_1,p_2,p_3,p_4} \zeta_1^{p_1} \zeta_2^{p_2} \zeta_3^{p_3} \zeta_4^{p_4}=\sum_p \lambda_p \zeta^p.
\end{align}
Also, let
 $x$ be the origin of the $(\zeta_1,\zeta_2,\zeta_3,\zeta_4)$-space.
Here,
the points $p=(p_1,p_2,p_3,p_4)\in \mathbb{Z}_{\geq 0} ^4$ with $\lambda_p\not =0$
 are integral points of a $3$-dimensional Newton polytope $\Delta$. 
 Such a polytope $\Delta$ is determined by a weight vector 
 $\mathfrak{a}=(\mathfrak{a}_1,\mathfrak{a}_2,\mathfrak{a}_3,\mathfrak{a}_4)\in \mathbb{Q}_{>0}^4$ as follows.
 Let $\Delta'$ be the convex hull of
 $$
 \Big\{\Big(\frac{1}{\mathfrak{a}_1},0,0,0\Big),\Big(0,\frac{1}{\mathfrak{a}_2},0,0\Big),\Big(0,0,\frac{1}{\mathfrak{a}_3},0\Big),\Big(0,0,0,\frac{1}{\mathfrak{a}_4}\Big)\Big\}.
 $$
 Then, $\Delta$ is the convex hull of all integral points of $\Delta'$.
We can prove that
 $(\mathcal{X},x)$ is a simple $K3$ singularity
 if and only if
 the corresponding $3$-dimensional polytope $\Delta$ contains the point $(1,1,1,1)$ in the relative interior (see \cite{W}).
 We note that such a weight vector $\mathfrak{a}$ satisfies
 $\sum_{i=1}^4 \mathfrak{a}_i p_i=1$
 for every integral point $p\in \Delta$.
 Especially,
 it holds
 $\sum_{i=1}^4 \mathfrak{a}_i =1$. 
 Yonemura \cite{Y} classifies such weight vectors into $95 $ types.

 We can obtain  minimal resolutions of the above simple $K3$ singularities as follows. 
 Let  the weight vector $\mathfrak{a}$ be given by $(\frac{w_1}{w},\frac{w_2}{w},\frac{w_3}{w},\frac{w_4}{w})$ such that ${\rm gcd}(w_i,w_j,w_k)=1$ for  distinct  $i,j,k\in \{1,2,3,4\}$.
 Then, from the $4$ unit vectors in $\mathbb{Z}^4_{\geq 0}$ and the integral vector $(w_1,w_2,w_3,w_4),$
 we can obtain a $4$-dimensional fan $\Upsilon \subset \mathbb{R}_{\geq 0}^4$.
So, we have a $4$-dimensional toric variety $V_\Upsilon$.
 There is a canonical morphism $\tilde{\rho}: V_\Upsilon \rightarrow \mathbb{C}^4$ such that 
 $V_\Upsilon-\tilde{\rho}^{-1}(0)$ is isomorphic to $\mathbb{C}^4-\{0\}$
 and
 $\tilde{\rho}^{-1}(0)$ is a $3$-dimensional weighted projective space $\mathbb{P}(w_1,w_2,w_3,w_4).$
 Letting $\widetilde{\mathcal{X}}$ be the proper transform of $\mathcal{X}$ by $\tilde{\rho}$,
 set $\rho=\tilde{\rho}|_{\widetilde{\mathcal{X}}}$.
  Then,  we have the following result.
  
  \begin{prop}(\cite{Y} Section 3, see also \cite{IW} Section IV)
  The morphism
  $\rho: (\widetilde{\mathcal{X}},E)\rightarrow (\mathcal{X},x)$ gives a minimal resolution of  $(\mathcal{X},x)$.
  Here, the exceptional divisor
  is given by a $2$-dimensional set
  $\rho^{-1}(0)=\tilde{\rho}^{-1}(0) \cap \widetilde{\mathcal{X}}$,
  which is a hypersurface  in $\mathbb{P}(w_1,w_2,w_3,w_4)$ corresponding to (\ref{HyperSurface}).
  \end{prop}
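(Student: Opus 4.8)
The plan is to recognise $\tilde\rho$ as the weighted blow-up of $\mathbb C^4$ at the origin with weights $(w_1,w_2,w_3,w_4)$ and then to read off the exceptional data from the toric combinatorics; the morphism $\rho$ of the statement is its restriction to the proper transform $\widetilde{\mathcal X}$ of $\mathcal X$. Write $\mathbb C^4=V_{\sigma_0}$ for the orthant $\sigma_0=\mathbb R_{\ge0}e_1+\cdots+\mathbb R_{\ge0}e_4$ and set $\mathbf w=(w_1,w_2,w_3,w_4)$; the hypothesis ${\rm gcd}(w_i,w_j,w_k)=1$ makes $\mathbf w$ a primitive lattice vector in the relative interior of $\sigma_0$. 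The fan $\Upsilon$ of the statement is then the star subdivision of $\sigma_0$ along the ray $\tau=\mathbb R_{\ge0}\mathbf w$, whose maximal cones are $\sigma_k={\rm Cone}\,(\{e_i\mid i\ne k\}\cup\{\mathbf w\})$ for $k=1,\dots,4$; this refinement gives a proper birational toric morphism $\tilde\rho\colon V_\Upsilon\to\mathbb C^4$, and since every proper face of $\sigma_0$ remains a cone of $\Upsilon$, the map $\tilde\rho$ is an isomorphism over $\mathbb C^4\setminus\{0\}$, the complement of the torus-fixed point.

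I would next identify the fibre $\tilde\rho^{-1}(0)$, which equals the closure $V(\tau)$ of the torus orbit of the new ray $\tau$. By the orbit--cone correspondence this is the toric variety of the star (quotient) fan of $\tau$ in $\mathbb Z^4/\mathbb Z\mathbf w\cong\mathbb Z^3$: its cones are the images $\overline\sigma_k={\rm Cone}\,(\overline e_i\mid i\ne k)$, so its rays are $\overline e_1,\dots,\overline e_4$ subject to the single relation $w_1\overline e_1+\cdots+w_4\overline e_4=0$, with all three-element subsets of the rays spanning maximal cones. This is precisely the defining fan of $\mathbb P(w_1,w_2,w_3,w_4)$, so $\tilde\rho^{-1}(0)\cong\mathbb P(w_1,w_2,w_3,w_4)$.

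Then I would pass to the hypersurface. Since every monomial $\zeta^p$ of $F$ satisfies $\sum_i w_i p_i=w$ for a common integer $w$, the polynomial $F$ is $\mathbf w$-weighted homogeneous of weighted degree $w$ with $\mathbf w$-multiplicity $w$ at the origin, and the relation $\sum_i\mathfrak a_i=1$ recalled above reads $w_1+\cdots+w_4=w$. In the chart $U_{\sigma_k}$ with exceptional coordinate $\xi_k$ one then has $\tilde\rho^*F=\xi_k^{\,w}\widehat F_k$, so $\widetilde{\mathcal X}\cap U_{\sigma_k}=\{\widehat F_k=0\}$ and $\rho^{-1}(0)=\widetilde{\mathcal X}\cap\tilde\rho^{-1}(0)=\{\widehat F_k=\xi_k=0\}$; the dehomogenizations $\widehat F_k$ glue to exhibit $\rho^{-1}(0)$ as the zero locus of $F$, viewed as a weighted-homogeneous form, in $\mathbb P(w_1,w_2,w_3,w_4)$, that is, the hypersurface corresponding to (\ref{HyperSurface}). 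That $\rho$ is a resolution and $E=\rho^{-1}(0)$ is a normal $K3$ surface is where the two standing hypotheses enter: non-degeneracy of $F$ forces $\widetilde{\mathcal X}$ to be normal with singularities confined to the lower-dimensional toric strata and forces $E$ to be normal, while ``$(1,1,1,1)$ in the relative interior of $\Delta$'' is precisely the condition making the Poincar\'e residue over $F$ of $\zeta_1\zeta_2\zeta_3\zeta_4\,\tfrac{d\zeta_1}{\zeta_1}\wedge\cdots\wedge\tfrac{d\zeta_4}{\zeta_4}$ a nowhere-vanishing section of $\omega_E$, whence $E$ is a normal $K3$ surface as in Proposition~1.1. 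For minimality, $\mathcal X$ is Gorenstein with $K_{\mathcal X}=0$, and the weighted-blow-up discrepancy formula together with adjunction for the hypersurface give $K_{\widetilde{\mathcal X}}=\rho^*K_{\mathcal X}+(w_1+\cdots+w_4-w-1)E=-E$, using $w_1+\cdots+w_4=w$ and $K_{\mathcal X}=0$; thus $E$ is the unique $\rho$-exceptional prime divisor, of discrepancy $-1$ (matching the $(0,2)$-type picture recalled before Proposition~1.1), and $-E=K_{\widetilde{\mathcal X}}$ is $\rho$-ample since $E$ is contracted to a point, so $\widetilde{\mathcal X}$ admits no further contraction over $(\mathcal X,x)$.

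The one genuinely nontrivial point is the local analysis just invoked. The ambient variety $V_\Upsilon$ is itself singular along the non-unimodular orbits --- the cone $\sigma_k$ has multiplicity $w_k$ and so carries a cyclic quotient singularity --- so one must use Newton non-degeneracy of $F$ chart by chart to verify that $\widetilde{\mathcal X}$ meets these strata well enough to remain normal and to make $E$ a normal $K3$ surface. This is carried out in \cite{Y}, Section~3, with the complex-analytic details supplied in \cite{IW}, Section~IV; everything else above is formal from standard toric geometry.
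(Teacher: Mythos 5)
The paper gives no proof of this proposition at all --- it is stated as a quoted result from \cite{Y} Section 3 and \cite{IW} Section IV --- so there is no internal argument to compare against; your toric sketch (star subdivision of the orthant along the ray through $(w_1,w_2,w_3,w_4)$, the orbit--cone identification of $\tilde\rho^{-1}(0)$ with $\mathbb{P}(w_1,w_2,w_3,w_4)$ using ${\rm gcd}(w_i,w_j,w_k)=1$, and weighted homogeneity of $F$ with $\sum_i w_i=w$ giving discrepancy $-1$ and $K_{\widetilde{\mathcal{X}}}=-E$) is a correct reconstruction of exactly the argument carried out in those references. You are also right to isolate the chart-by-chart normality analysis via Newton non-degeneracy as the one genuinely nontrivial step, and deferring that to \cite{Y} and \cite{IW} is appropriate for a result the paper itself only cites.
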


Let $\mathfrak{K}_\lambda$ be the hypersurface in $\mathbb{P}(w_1,w_2,w_3,w_4)$ in the above proposition.
 So,
we obtain a family $\{\mathfrak{K}_\lambda\}_\lambda$ of $K3$ hypersurfaces in $\mathbb{P}(w_1,w_2,w_3,w_4)$ with 
complex parameters $\lambda=\lambda_{p_1,p_2,p_3,p_4}$.
In fact,  each weight vector of the 95 vectors of \cite{Y} corresponds to  one of the ``famous 95'' families of $K3$ weighted projective hypersurfaces with Gorenstein singularities 
 due to Reid (for example, see \cite{R} Section 4). 
 Moreover,
 Belcastro \cite{B} 
 studies the lattice structure of a generic member of each family of $95$ families
 from the viewpoint of mirror symmetry of $K3$ surfaces.

 \subsection{Family of $K3$ surfaces corresponding to a weight vector}
 
 In this paper,
 we  focus on the weight vector
 $$
 \mathfrak{a}=\Big( \frac{11}{27},\frac{1}{3}, \frac{5}{27} ,\frac{2}{27} \Big).
 $$
 This is the No.88 in Yonemura's list of the weight vectors (see \cite{Y}).
 The purpose of the present paper is
 to show that the family coming from the  singularity of No.88
gives a natural extension of the family of Kummer surfaces. 
 
 In this case, 
 the polynomial $F(\zeta)$ of (\ref{HyperSurface}) is given by a linear combination of $11$ monomials
 $$
 \zeta_1^2 \zeta_3, \zeta_1 \zeta_4^8, \zeta_2^3, \zeta_2 \zeta_4^9, \zeta_3^5 \zeta_4, \zeta_3 \zeta_4^{11}, \zeta_1 \zeta_2 \zeta_3 \zeta_4, \zeta_2^2 \zeta_3 \zeta_4^2, \zeta_1 \zeta_3^2 \zeta_4^3, \zeta_2 \zeta_3^2 \zeta_4^4, \zeta_3 \zeta_4^{11}
 $$
 over $\mathbb{C}.$
 We can regard $F(\zeta)$ as a quasi homogeneous polynomial of weight $27$,
where  the weight of $\zeta_1$ ($ \zeta_2, \zeta_3, \zeta_4$, resp.) is $11$ ($9,5,2$, resp.).
 A $K3$ surface $\mathfrak{K}_\lambda$ is corresponding to the hypersurface $\{F(\zeta)=0\}$
 in $\mathbb{P}(11,9,5,2).$
 So,
  by putting $\zeta_1=x_0, \zeta_2=y_0, \zeta_3=z_0$ and $\zeta_4=1$,
  $\mathfrak{K}_\lambda$ is given by a hypersurface in $\mathbb{C}^3$,
 which is defined by a linear combination of monomials
 \begin{align}\label{monomialsxyz}
 x_0^2 z_0, x_0, y_0^3, y_0, z_0^5, z_0, x_0 y_0 z_0, y_0^2 z_0, x_0 z_0^2, y_0 z_0^2, z_0.
 \end{align}
A generic member of this family of complex surfaces is a $K3$ surface.
 By an appropriate birational transformation,
 the defining equation of  a generic member can be transformed to an equation 
 \begin{align}\label{E88}
  \hat{K}(t) :\quad  x_1^2 = y_1^3 +(t_4 z^4 + t_{10} z^2) y_1+(z^8 + t_6 z^6 + t_{12} z^4 + t_{18} z^2).
 \end{align}
 We note that the equation (\ref{E88}) defines an elliptic surface 
with $5$ complex parameters  $t_4, t_6, t_{10},t_{12}$ and $t_{18}$.

  \begin{rem}
  Let $\mathcal{R}(x_0,y_0,z_0)=0$ be a linear combination of the monomials in (\ref{monomialsxyz}).
  Then, one can find  
  appropriate constants $c_y, c_z \in \mathbb{C}$
and   polynomial $\psi(y_1',z)\in \mathbb{C}[y_1',z]$
  such that
  $\mathcal{R}(x_0,y_0,z_0)=0$ is transformed to
  \begin{align}\label{EqCss}
  x_1^2= y_1'^3 + (t_2' z) y_1'^2 +(t_4'z^4+ t_{10}' z^2) y_1'+(z^8+ t_6' z^6 + t_{12}' z^4+ t_{18}' z^2)
  \end{align}
 by the birational transformation $(x_0,y_0,z_0) \mapsto (x_1,y_1',z)$
with  $x_0=\frac{x_1}{z^2}+\frac{\psi(y_1',z)}{z}, y_0 =c_y \frac{y_1'}{z}, z_0=c_z z$.
Here, $t_j'$  ($j\in\{2,4,6,10,12,18\}$) are complex numbers.
The form (\ref{EqCss}) can be birationally transformed to (\ref{E88}).
  \end{rem}

 \section{Partner surfaces and Hermitian modular forms}
 
 In order to investigate the $K3 $ surfaces $\hat{K}(t)$,
 let us consider  the partner surfaces $\hat{S}(t)$ of  (\ref{S(t)}).
 We can consider the moduli of the family of $\hat{S}(t)$,
 since 
 they have  clear structures of marked lattice polarized $K3$ surfaces as in \cite{Na} Section 2. 
   In this section, we survey those results.
   For detailed proofs, see \cite{Na} and \cite{NS}.

 Let $t=(t_4,t_6,t_{10},t_{12},t_{18})\in \mathbb{C}^5-\{0\}.$
 Let us consider
  the elliptic surfaces
 \begin{align}\label{S(t)}
  \hat{S}(t):\quad  z_2^2=y_2^3 + (t_4 x_2^4  + t_{10} x_2^3 )y_2 + (x_2^7 + t_6 x_2^6  + t_{12} x_2^5  + t_{18} x_2^4 ).
 \end{align}
 Suppose the weight of $x_2$ ($y_2,z_2, t_j$, resp.) is $6$ ($14, 21,j$, resp.),
 the equation of (\ref{S(t)}) is quasi homogeneous of weight $42$.
So, we have the family of the complex surfaces $\hat{S}(t)$ over
 $
  \mathbb{P}(4,6,10,12,18)={\rm Proj}(\mathbb{C}[t_4,t_6,t_{10},t_{12},t_{18}]).
 $ 
The point of $\mathbb{P}(4,6,10,12,18)$ corresponding to $t=(t_4,t_6,t_{10},t_{12},t_{18}) \in \mathbb{C}^5 -\{0\}$
 is denoted by
 $[t]=(t_4:t_6:t_{10}:t_{12}:t_{18})$.
 Set
 \begin{align}\label{TPara}
 T= \mathbb{P}(4,6,10,12,18) - \{[t] \in \mathbb{P}(4,6,10,12,18) \hspace{0.5mm} | \hspace{0.5mm} t_{10}=t_{12}=t_{18}=0 \}.
\end{align}
If $[t] \in T$, then the complex surface (\ref{S(t)}) is a $K3$ surface.

  The $K3$ lattice $L_{K3}$, which is isomorphic to the $2$-homology group of a $K3$ surface, is isometric to
 $II_{3,19}=U\oplus U\oplus U \oplus E_8(-1) \oplus E_8(-1).$
 Also, the lattice
 \begin{align}\label{LatticeA}
 A=U \oplus U \oplus A_2(-1)
 \end{align}
 of signature $(2,4)$ is necessary for our study.

 \begin{prop}\label{PropPartnerLattice}
  (1) (\cite{Na} Corollary 1.1) \label{NSTrProp}
 For a generic point $[t]\in T$,
 the transcendental lattice (the N\'eron-Severi lattice, resp.) of
 the $K3$ surface $\hat{S}(t)$ of (\ref{S(t)})
 is given by the  intersection matrix
 $A$ of (\ref{LatticeA}) ($U\oplus E_8(-1) \oplus E_6(-1)$, resp.).
 
 (2) (\cite{Na} Proposition 2.3)
  For a generic point $[t]\in \{t_{18}=0\}\cap T$,
 the transcendental lattice (the N\'eron-Severi lattice, resp.) of
 $\hat{S}(t)$
 is given by the  intersection matrix
 $U\oplus U\oplus A_1(-1)$  ($U\oplus E_8(-1) \oplus E_7(-1)$, resp.).
Such a surface coincides with the $K3$ surfaces studied in \cite{CD}.
 \end{prop}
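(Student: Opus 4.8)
The plan is to treat each $\hat S(t)$ as an elliptic surface over the $x_2$-line and extract its lattice invariants from the Kodaira types of its singular fibres. First I would clear denominators and rescale to put (\ref{S(t)}) into a minimal Weierstrass model $z^2=y^3+a(x_2)y+b(x_2)$ over $\mathbb{P}^1$ and, using the hypotheses defining $T$ in (\ref{TPara}), check that the resulting surface is a K3 surface, so that its discriminant $\Delta=-16(4a^3+27b^2)$ is a section of $\mathcal{O}_{\mathbb{P}^1}(24)$ (Euler number $24$). Reading off vanishing orders: at $x_2=\infty$ one has $(\operatorname{ord}a,\operatorname{ord}b,\operatorname{ord}\Delta)=(4,5,10)$, giving a fibre of Kodaira type $II^{*}$; at $x_2=0$, for a generic $[t]$ (so $t_{10}\neq 0$, $t_{18}\neq 0$), one has $(3,4,8)$, giving a fibre of type $IV^{*}$; and the remaining $24-10-8=6$ units of the discriminant are simple zeros away from $0$ and $\infty$, producing six nodal ($I_1$) fibres. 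When $t_{18}=0$ the orders at $x_2=0$ become $(3,5,9)$, so the $IV^{*}$ fibre degenerates to type $III^{*}$ and one is left with $24-10-9=5$ nodal fibres.

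Next I would assemble the trivial lattice $T_{\mathrm{triv}}$, generated by the zero section, a general fibre, and the fibre components not meeting the zero section: in the generic case $T_{\mathrm{triv}}\cong U\oplus E_8(-1)\oplus E_6(-1)$, of rank $16$, and in the case $t_{18}=0$ it is $U\oplus E_8(-1)\oplus E_7(-1)$, of rank $17$. By the Shioda--Tate formula the Picard number equals $\operatorname{rank}T_{\mathrm{triv}}+\operatorname{rank}\mathrm{MW}$, and $\mathrm{NS}(\hat S(t))$ is the extension of $T_{\mathrm{triv}}$ by the Mordell--Weil group. \textbf{The main obstacle is to prove that for a generic $[t]$ the Mordell--Weil group is trivial} --- that its free part has rank $0$ and that there are no torsion sections. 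The vanishing of the Mordell--Weil rank is a genuinely transcendental statement that cannot be read off the fibre types; I would deduce it from the surjectivity of the period map for the family $\mathcal{F}_1$ of \cite{Na} (the moduli is $4$-dimensional, matching $20-16$, the dimension of the period domain of $U\oplus E_8(-1)\oplus E_6(-1)$-polarized K3 surfaces, and $20-17=3$ in the case $t_{18}=0$), or, failing that, by exhibiting a single member of Picard number $16$ (resp.\ $17$) and invoking that the Picard number jumps only on a proper analytic subset. Absence of $3$-torsion (resp.\ $2$-torsion) sections is then checked directly on the Weierstrass equation, e.g.\ by showing the generic fibre carries no rational $3$-torsion (resp.\ $2$-torsion) point; this pins down $\mathrm{NS}(\hat S(t))=T_{\mathrm{triv}}$.

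Finally, with $\mathrm{NS}(\hat S(t))\cong U\oplus E_8(-1)\oplus E_6(-1)$ of signature $(1,15)$ and discriminant group $\mathbb{Z}/3$, the transcendental lattice $\mathrm{Tr}(\hat S(t))=\mathrm{NS}^{\perp}$ inside $L_{K3}\cong II_{3,19}$ has signature $(2,4)$ and discriminant form equal to the negative of that of $E_6(-1)$, which is exactly the discriminant form of $A_2(-1)$. Since the rank $6$ exceeds $2+\ell=3$, where $\ell=1$ is the length of the discriminant group, Nikulin's uniqueness theorem for even lattices forces $\mathrm{Tr}(\hat S(t))\cong U\oplus U\oplus A_2(-1)=A$, and the same argument with $E_7(-1)$, $A_1(-1)$ and rank $5$ gives $U\oplus U\oplus A_1(-1)$ when $t_{18}=0$. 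To identify the $t_{18}=0$ family with the one in \cite{CD}, I would match the equation (\ref{S(t)}) restricted to $t_{18}=0$ with the defining equation of $\mathcal{F}_{\mathrm{CD}}$ by an explicit birational change of coordinates (both being $3$-dimensional families of $U\oplus E_8(-1)\oplus E_7(-1)$-polarized K3 surfaces), or equivalently by comparing their period maps.
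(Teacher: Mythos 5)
Your proposal is essentially correct, and it is worth noting that the paper itself offers no proof of this proposition: it is quoted verbatim from \cite{Na} (Corollary 1.1 and Proposition 2.3), so what you have written is a self-contained reconstruction rather than a parallel to an argument in the text. Your fibre-type bookkeeping checks out: with $a=t_4x_2^4+t_{10}x_2^3$, $b=x_2^7+t_6x_2^6+t_{12}x_2^5+t_{18}x_2^4$ one indeed gets $(\operatorname{ord}a,\operatorname{ord}b,\operatorname{ord}\Delta)=(3,4,8)$ at $x_2=0$ (type $IV^*$, contributing $E_6(-1)$), $(4,5,10)$ at $x_2=\infty$ (type $II^*$, contributing $E_8(-1)$), six residual $I_1$'s, and Euler number $10+8+6=24$; the degeneration to $III^*$ with five $I_1$'s when $t_{18}=0$ is also right. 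You correctly isolate the one genuinely nontrivial step, the vanishing of the Mordell--Weil group, and both of your proposed remedies are viable; note that the torsion part is even easier than you suggest, since torsion sections inject into the direct sum of the component groups of the singular fibres, which here is just the $\mathbb{Z}/3$ (resp.\ $\mathbb{Z}/2$) coming from the $IV^*$ (resp.\ $III^*$) fibre, so only a single division polynomial needs to be examined. The passage from $\mathrm{NS}$ to $\mathrm{Tr}$ via Propositions \ref{PropLatticeUnique} and \ref{PropLatticePerp} is exactly the style of argument the paper uses elsewhere (compare the proof of Theorem \ref{ThmTrK(t)} (2)), and the discriminant forms $-q_{E_6(-1)}=q_{A_2(-1)}$ do match. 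The only caution is that your dimension-count route to $\rho=16$ should not silently invoke the isomorphism (\ref{PhiIso}) of \cite{Na}, since that result may itself presuppose the lattice computation you are trying to prove; your fallback of exhibiting a single member with Picard number $16$ (or checking local injectivity of the period map directly) avoids any circularity.
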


We remark that $A$ is the simplest lattice satisfying the Kneser conditions,
 which are  arithmetic conditions for quadratic forms.
 Due to the Kneser conditions,
 a discrete group derived from $A$ has good properties  as follows.
 First, we have a $4$-dimensional space
 $
\mathcal{D}_M= \{ [\xi] \in \mathbb{P} (A \otimes \mathbb{C}) \hspace{0.5mm} | \hspace{0.5mm}      (\xi, \xi) =0,  (\xi, \overline{\xi}) >0\}
$
from $A$.
Let $\mathcal{D}$ be a connected component of $\mathcal{D}_M$.
We have a subgroup
 $
 O^+(A) = \{\gamma \in O(A) \hspace{0.5mm} | \hspace{0.5mm} \gamma(\mathcal{D}) = \mathcal{D}\}.
 $
Also, set
 $
  \tilde{O}(A) = {\rm Ker}(O(A) \rightarrow {\rm Aut}(A^\vee/A)),
 $
 where $A^\vee ={\rm Hom}(A,\mathbb{Z})$.
 Set 
 \begin{align}\label{DefGamma}
 \Gamma = \tilde{O}^+(A) = O^+(A) \cap \tilde{O}(A).
 \end{align}
Due to the Kneser conditions of $A$,
we can prove that
 $\Gamma = \tilde{O}^+ (A)$ is generated by the reflections of $(-2)$-vectors
 and ${\rm Char}(\Gamma)={\rm Hom}(\Gamma, \mathbb{C}^\times)$ is equal to $\{{\rm id}, {\rm det}\}$.

 We can define the multivalued period mapping $\Phi: T\rightarrow \mathcal{D}$  defined by
 \begin{align}\label{PerPhi}
  [t] \mapsto \Big(\int_{\Delta_{1,t}} \omega_{[t]}^S : \cdots: \int_{\Delta_{6,t}} \omega_{[t]}^S \Big),
 \end{align}
 where $\omega_{[t]}^S$ is the unique holomorphic $2$-form up to a constant factor and $\Delta_{1,t},\ldots, \Delta_{6,t}$ are appropriate $2$-cycles on $\hat{S}(t)$.
 We note that the cycles $\Delta_{1,t},\ldots, \Delta_{6,t}$ are constructed geometrically and explicitly  in \cite{NS} Section 2.
We can prove that this mapping  induces the biholomorphic isomorphism 
\begin{align}\label{PhiIso}
\bar{\Phi} : T \simeq \mathcal{D}/\Gamma
\end{align}
by a detailed argument of the period mappings for lattice polarized $K3$ surfaces.
Especially,
(\ref{PhiIso}) means that
$T$ of (\ref{TPara}) gives the moduli space of marked pseudo-ample $(U\oplus E_8(-1) \oplus E_6(-1))$-polarized $K3$ surfaces
(for detail, see \cite{Na} Section 2).
Thus, 
we have a clear expression of the moduli space of our partner surface (\ref{S(t)}).

The period mapping for the family of $\hat{S}(t)$ is closely related to modular forms.
Let $\mathcal{D}^*$ be the $\mathbb{C}^\times$-bundle of $\mathcal{D}$.
If a holomorphic function $f:\mathcal{D}^* \rightarrow \mathbb{C}$ 
given by $Z\mapsto f(Z)$ satisfies the conditions
\begin{itemize}

\item[(i)] $f(\lambda Z) = \lambda^{-k} f(Z) \quad (\text{for all } \lambda \in \mathbb{C}^*)$,

\item[(ii)] $f(\gamma Z) = \chi(\gamma) f(Z) \quad (\text{for all } \gamma \in \Gamma),$

\end{itemize}
where $k\in \mathbb{Z}$ and $\chi \in {\rm Char}(\Gamma)$,
then $f$ is called a modular form of weight $k$ and character $\chi$ for the group $\Gamma$.

\begin{thm}\label{ThmModular} (\cite{Na}, Theorem 5.1)
(1)  
The ring  $\mathcal{A}(\Gamma,{\rm id})$ of modular forms of  character {\rm id} is isomorphic to
the ring  $\mathbb{C}[t_4,t_6,t_{10},t_{12},t_{18}]$.
Namely, 
via the inverse of the period mapping $\bar{\Phi}$ of (\ref{PhiIso}),
the correspondence $Z\mapsto t_k(Z)$ gives a modular form of weight $k$ and character ${\rm id}$.

(2) There is a modular form $s_{54}$  of weight $54$ and character ${\rm det}$.
Here, $s_{54}$ is given by $s_9 s_{45}$, where
$s_9$ and $s_{45}$ are holomorphic functions on $\mathcal{D}^*$ such that
\begin{align*}
s_9^2 =t_{18},  \quad \quad
s_{45}^2 =(\text{a polynomial in } t_4,t_6,t_{10},t_{12},t_{18} \text{of weight } 90 \text{ in \cite{Na} Section 1}).
\end{align*}
These relations determine the structure of the ring $\mathcal{A}(\Gamma)$ of modular forms with characters.
\end{thm}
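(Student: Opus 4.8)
\noindent\emph{Proof plan.}
Since this statement is quoted from \cite{Na}, Theorem~5.1, I only indicate how I would prove it. The plan is to deduce everything, through the period isomorphism $\bar\Phi\colon T\simeq\mathcal D/\Gamma$ of (\ref{PhiIso}), from the two consequences of the Kneser conditions of $A$ already recorded: that $\Gamma=\tilde O^+(A)$ of (\ref{DefGamma}) is generated by the reflections in $(-2)$-vectors, and that ${\rm Char}(\Gamma)=\{{\rm id},{\rm det}\}$.

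For part (1), I would first reinterpret a modular form of weight $k$ as a holomorphic section of the $k$-th power of the automorphy line bundle on $\mathcal D$, i.e.\ a holomorphic function on the $\mathbb C^{\times}$-bundle $\mathcal D^{*}$ with $f(\lambda Z)=\lambda^{-k}f(Z)$. For character ${\rm id}$ this is a genuine $\Gamma$-invariant section, hence descends to a section of a line bundle $\mathcal L_k$ on $\mathcal D/\Gamma$, which I transport along $\bar\Phi$ to a power of $\mathcal O(1)$ on $T\subset\mathbb P(4,6,10,12,18)$. The next step is to fix the grading: from the weight-$42$ quasi-homogeneity of $\hat S(t)$ of (\ref{S(t)}) and the explicit periods $\int_{\Delta_{j,t}}\omega_{[t]}^{S}$ of (\ref{PerPhi}) (built in \cite{NS}), the rescaling $(x_2,y_2,z_2)\mapsto(\lambda^{6}x_2,\lambda^{14}y_2,\lambda^{21}z_2)$ sends $\omega^{S}\mapsto\lambda\,\omega^{S}$ and $t_j\mapsto\lambda^{j}t_j$, so $t_j$ has weight $j$. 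Then, because $A$ has signature $(2,4)$ and $4\ge 3$, I invoke the Koecher principle, so every such section extends holomorphically over the Baily--Borel boundary; and since $\mathbb P(4,6,10,12,18)\setminus T=\{t_{10}=t_{12}=t_{18}=0\}$ is the weighted projective line $\mathbb P(4,6)$, of codimension $3$, sections over $T$ extend over the whole weighted projective space by normality. A final check of the cusps of $\mathcal D/\Gamma$ (attached to the isotropic sublattices of $A$) identifies $(\mathcal D/\Gamma)^{\rm BB}$ with $\mathbb P(4,6,10,12,18)$, so that
\[
\mathcal A(\Gamma,{\rm id})=\bigoplus_{k\ge 0}H^{0}\bigl(\mathbb P(4,6,10,12,18),\mathcal O(k)\bigr)=\mathbb C[t_4,t_6,t_{10},t_{12},t_{18}],
\]
the $t_j$ being exactly the weight-$j$ modular forms pulled back from the coordinate functions by $\bar\Phi^{-1}$.

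For part (2), since ${\rm Char}(\Gamma)$ has order two, $\mathcal A(\Gamma)=\mathcal A(\Gamma,{\rm id})\oplus\mathcal A(\Gamma,{\rm det})$, and $\mathcal A(\Gamma,{\rm det})$ is a rank-one module over $\mathbb C[t_4,\dots,t_{18}]$, because ratios of two of its nonzero elements lie in $\mathbb C(t)$ while squares land in $\mathcal A(\Gamma,{\rm id})$; so it is enough to produce the anti-invariant form of least weight. A form of character ${\rm det}$ is negated by every reflection, hence vanishes along every mirror $\delta^{\perp}$ with $\delta^{2}=-2$; conversely the suitably normalised product of all mirror equations is the reduced ramification divisor of $\mathcal D\to\mathcal D/\Gamma$, whose square, being $\Gamma$-invariant, descends to a polynomial in the $t_j$. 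The essential computation --- the geometric core of \cite{Na} --- is to describe this branch divisor in the coordinates $t_j$: it splits into parts lying over $\{t_{18}=0\}$ and over the zero locus of an explicit weight-$90$ polynomial $P=P(t_4,t_6,t_{10},t_{12},t_{18})$, each appearing with multiplicity two, so that the square roots $s_9$ (with $s_9^{2}=t_{18}$) and $s_{45}$ (with $s_{45}^{2}=P$) are single-valued on the simply connected $\mathcal D^{*}$ and $s_{54}=s_9s_{45}$ is a modular form of weight $54$ and character ${\rm det}$ generating $\mathcal A(\Gamma,{\rm det})$ over $\mathbb C[t_4,\dots,t_{18}]$. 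Hence
\[
\mathcal A(\Gamma)=\mathbb C[t_4,t_6,t_{10},t_{12},t_{18},s_{54}]\big/\bigl(s_{54}^{2}-t_{18}P\bigr),
\]
as the last sentence of the theorem asserts.

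The hard part will be the explicit identification, in the coordinates $t_4,\dots,t_{18}$, of the Baily--Borel boundary in part (1) and of the branch divisor of $\mathcal D\to\mathcal D/\Gamma$ in part (2) --- equivalently, computing the discriminant of the family $\{\hat S(t)\}$ and matching its irreducible components, with multiplicities, against the $\Gamma$-orbits of $(-2)$-mirrors. Everything else --- descent of line bundles, the Koecher principle, the two-element character group, and the module reduction in part (2) --- is formal once (\ref{PhiIso}) and the Kneser conditions of $A$ are in hand.
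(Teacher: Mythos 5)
The paper does not prove this theorem: it is imported verbatim from \cite{Na}, Theorem 5.1, as part of the survey in Section 2, so there is no internal proof to measure your plan against. That said, your plan is the standard strategy for such results and is consistent with how the cited source proceeds: for part (1), identify $\mathcal{A}(\Gamma,{\rm id})$ with the section ring of the automorphy bundle over the Baily--Borel compactification of $\mathcal{D}/\Gamma$ via the Koecher principle, and match that compactification with $\mathbb{P}(4,6,10,12,18)$ using the isomorphism (\ref{PhiIso}) and the grading coming from the quasi-homogeneity of (\ref{S(t)}); for part (2), use ${\rm Char}(\Gamma)=\{{\rm id},{\rm det}\}$ and the fact that $\Gamma$ is generated by $(-2)$-reflections to realize $\mathcal{A}(\Gamma,{\rm det})$ as a free rank-one module generated by the square root of the (doubled) branch divisor, split as $s_9 s_{45}$ over the two components $\{t_{18}=0\}$ and $\{P=0\}$. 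You correctly flag that the substantive content --- identifying the Baily--Borel boundary and the discriminant components with multiplicities in the coordinates $t_j$ --- is exactly what \cite{Na} supplies via the explicit construction of the cycles $\Delta_{j,t}$ and the Torelli-type argument behind (\ref{PhiIso}); this cannot be obtained formally from the Kneser conditions alone.

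Two small points of care. First, $\mathcal{D}^*$ is a $\mathbb{C}^\times$-bundle over the contractible domain $\mathcal{D}$, hence has $\pi_1\cong\mathbb{Z}$ and is \emph{not} simply connected; the single-valuedness of $s_9$ and $s_{45}$ along the fiber direction instead follows from the evenness of the weights $18$ and $90$, while single-valuedness over $\mathcal{D}$ uses that the pullbacks of $t_{18}$ and $P$ vanish to order two along the mirrors, as you note. Second, your reduction of $\mathcal{A}(\Gamma,{\rm det})$ to a principal module tacitly uses that a rank-one torsion-free (indeed reflexive) module over the polynomial ring $\mathbb{C}[t_4,\dots,t_{18}]$ is free; this is true but should be said. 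Neither point affects the viability of the plan.
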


There is a biholomorphic mapping between $\mathcal{D}$ and the $4$-dimensional  complex bounded symmetric domain $\mathbb{H}_I$ of type $I$.
Via this  biholomorphic mapping, 
the modular forms in Theorem \ref{ThmModular} are identified with the Hermitian modular forms for the imaginary quadratic field of the smallest discriminant.
Moreover, 
we have an explicit expression of the Hermitian modular forms in Theorem \ref{ThmModular}
by theta functions introduced by Dern-Krieg \cite{DK} (see \cite{NS}).

Thus, 
the family of $K3$ surfaces $\hat{S}(t)$ is very closely related to Hermitian modular forms and theta functions.
Such a relation is a natural and non-trivial counterpart of a classical relation among the Weierstrass form of elliptic curves, elliptic modular forms and Jacobi theta functions.

 \section{Lattice theory for double coverings for $K3$ surfaces}
 
 \subsection{Lattice theoretic properties}

 Let $L$ be a non-degenerate even lattice.
 Let $(s_+,s_-)$ be the signature of $L$.
 We have a natural embedding
 $L\hookrightarrow L^\vee ={\rm Hom}(L,\mathbb{Z})$.
 We set $\mathscr{A}_L = L^\vee/L$.
 The length $l (\mathscr{A}_L)$ of $\mathscr{A}_L$ is the minimum number of generators of $\mathscr{A}_L$.
 
 Let $q$ be the quadratic form which defines the lattice $L$.
 Then, it induces the discriminant form $q_L : \mathscr{A}_L \rightarrow \mathbb{Q}/2\mathbb{Z}.$
 We have $q_{L_1 \oplus L_2} \simeq q_{L_1} \oplus q_{L_2}.$

 \begin{prop}\label{PropLatticeUnique}
 (\cite{NikQuad} Corollary 1.13.3, \cite{Mo} Theorem 2.2)
 Let $L$ be an even lattice with the conditions
 $s_+>0, s_->0$ and $l(\mathscr{A}_L)\leq {\rm rank}(L)-2$.
 Then, $L$ is the unique lattice with invariants $(s_+,s_-,q_L)$ up to isometry.
 \end{prop}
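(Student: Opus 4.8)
The statement to prove is Proposition 2.1 (labeled \texttt{PropLatticeUnique}): an even lattice $L$ with $s_+>0$, $s_->0$ and $l(\mathscr{A}_L)\le \mathrm{rank}(L)-2$ is determined up to isometry by its invariants $(s_+,s_-,q_L)$. This is essentially a citation of Nikulin's Corollary 1.13.3 together with a reformulation due to Morrison, so the ``proof'' here is really an exposition of why those cited results apply.

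The plan is to reduce the statement to Nikulin's uniqueness criterion for indefinite even lattices. First I would recall that by Nikulin's theory, an even lattice $L$ is determined by the genus data: signature and discriminant form. So the content is showing that, under the stated hypotheses, the genus contains only one class. The standard route is: (1) note $L$ is indefinite since $s_+>0$ and $s_->0$; (2) invoke Nikulin's Corollary 1.13.3, whose hypothesis is exactly that $L$ is indefinite, even, and $\mathrm{rank}(L) \ge l(\mathscr{A}_L)+2$ — this is precisely the condition $l(\mathscr{A}_L)\le\mathrm{rank}(L)-2$; (3) conclude uniqueness in the genus, hence uniqueness up to isometry given $(s_+,s_-,q_L)$. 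I would also point out that Morrison (Theorem 2.2) packages exactly this into the form convenient for $K3$-lattice applications, which is why both references are given.

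The only mildly technical point to spell out is the equivalence ``same $(s_+,s_-,q_L)$'' $\Leftrightarrow$ ``same genus'' for even lattices: two even lattices lie in the same genus iff they have the same signature and isomorphic discriminant forms (this is Nikulin Corollary 1.9.4 / the theory of $p$-adic symbols), so once uniqueness in the genus is granted, the statement as phrased follows immediately. Then the strong approximation / spinor genus argument underlying Nikulin's corollary — the part asserting that an indefinite form of rank $\ge 3$ (and the rank bound $\ge l+2$ handles the borderline small-rank cases) has class number one in its genus — is the real mathematical input, but it is exactly what is cited, so I would not reproduce it.

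Thus the main obstacle is essentially bookkeeping rather than mathematics: making sure the hypothesis $l(\mathscr{A}_L)\le\mathrm{rank}(L)-2$ is transcribed correctly into Nikulin's rank condition and that indefiniteness is used to rule out the delicate definite case (where class numbers genuinely exceed one). I expect the write-up to be short: state the reduction to genus theory, cite Nikulin's Corollary 1.13.3 for uniqueness in the genus of an indefinite even lattice satisfying the rank bound, and note Morrison's Theorem 2.2 as the ready-made reformulation. \emph{No computation is needed.}
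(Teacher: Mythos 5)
The paper gives no proof of this proposition at all --- it is stated purely as a citation of Nikulin's Corollary 1.13.3 and Morrison's Theorem 2.2 --- and your proposal correctly identifies it as such and accurately transcribes the hypotheses (indefiniteness from $s_+>0$, $s_->0$, and the length bound $l(\mathscr{A}_L)\leq {\rm rank}(L)-2$) into the form required by the cited results. This matches the paper's treatment, so there is nothing further to compare.
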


 \begin{prop}\label{PropLatticePerp}
 (\cite{NikQuad} Proposition 1.6.1, \cite{Mo} Lemma 2.4)
 For a unimodular lattice $L$ and a primitive embedding $M\hookrightarrow L$,
 it holds $q_{M^\perp} \simeq - q_M$.
 
 Conversely,
 if $M_1$ and $M_2$ are non-degenerate even lattices satisfying $q_{M_1} \simeq -q_{M_2}$,
 then there are a unimodular lattice $L$ and a primitive embedding $M_1 \hookrightarrow L$
 such that $M_1^\perp \simeq M_2$.
 \end{prop}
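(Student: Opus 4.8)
The plan is to use the standard ``gluing'' description of primitive embeddings in terms of discriminant forms, so I will only outline the two constructions rather than carry out the routine verifications. For the first assertion I would set $N=M^{\perp}\subseteq L$ and work with the chain $M\oplus N\subseteq L=L^{\vee}\subseteq M^{\vee}\oplus N^{\vee}$, noting that $N$ is primitive and, since $M$ is primitive, $M^{\perp\perp}=M$, hence $N^{\perp}=M$. Comparing discriminants along the chain (and using that $L$ is unimodular) yields $[L:M\oplus N]^{2}=|\mathscr{A}_{M}|\,|\mathscr{A}_{N}|$. Next I would study the glue group $H:=L/(M\oplus N)\subseteq\mathscr{A}_{M}\oplus\mathscr{A}_{N}$: using $N^{\perp}=M$ and $M^{\perp}=N$ one checks that the two projections $H\to\mathscr{A}_{M}$ and $H\to\mathscr{A}_{N}$ are injective, and then the index count forces both of them to be bijective, so $H$ is the graph of a group isomorphism $\gamma\colon\mathscr{A}_{M}\xrightarrow{\ \sim\ }\mathscr{A}_{N}$. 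Finally, for $v=(a,b)\in L$ the evenness $v\cdot v\in2\mathbb{Z}$ becomes $q_{M}(a\bmod M)+q_{N}(b\bmod N)=0$ in $\mathbb{Q}/2\mathbb{Z}$; since $b\bmod N=\gamma(a\bmod M)$ and $a$ runs over all of $M^{\vee}$, this says exactly that $\gamma$ is an anti-isometry, i.e.\ $q_{M^{\perp}}=q_{N}\simeq-q_{M}$.

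For the converse I would fix an anti-isometry $\gamma\colon\mathscr{A}_{M_{1}}\xrightarrow{\sim}\mathscr{A}_{M_{2}}$ realizing $q_{M_{1}}\simeq-q_{M_{2}}$ (so in particular $|\mathscr{A}_{M_{1}}|=|\mathscr{A}_{M_{2}}|$), let $\Gamma_{\gamma}=\{(x,\gamma(x)):x\in\mathscr{A}_{M_{1}}\}$ be its graph, and take $L$ to be the preimage of $\Gamma_{\gamma}$ under $M_{1}^{\vee}\oplus M_{2}^{\vee}\twoheadrightarrow\mathscr{A}_{M_{1}}\oplus\mathscr{A}_{M_{2}}$, with the $\mathbb{Q}$-valued form inherited from $M_{1}^{\vee}\oplus M_{2}^{\vee}$. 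Then $M_{1}\oplus M_{2}\subseteq L$ with index $|\mathscr{A}_{M_{1}}|$, and the anti-isometry property makes $a\cdot a+b\cdot b\equiv0\pmod{2\mathbb{Z}}$ for every $(a,b)\in L$, so the form on $L$ is $\mathbb{Z}$-valued and even; a discriminant computation, $|{\rm disc}(L)|=|\mathscr{A}_{M_{1}}|\,|\mathscr{A}_{M_{2}}|/|\mathscr{A}_{M_{1}}|^{2}=1$, then shows $L$ is unimodular. Since $M_{1}=L\cap(M_{1}\otimes\mathbb{Q})$ the embedding $M_{1}\hookrightarrow L$ is primitive, and any $(a,b)\in L$ orthogonal to $M_{1}$ has $a=0$ by non-degeneracy of $M_{1}$, hence $b\bmod M_{2}=\gamma(0)=0$, i.e.\ $(a,b)\in M_{2}$; therefore $M_{1}^{\perp}=M_{2}$.

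I expect the only genuinely delicate point to be the bookkeeping around the glue group in the first direction --- namely that \emph{both} projections of $H=L/(M\oplus N)$ onto $\mathscr{A}_{M}$ and $\mathscr{A}_{N}$ are bijective, which is where primitivity of $M$ (via $M^{\perp\perp}=M$) and the discriminant index formula are used --- together with the parallel check in the converse that the glued lattice is simultaneously integral, even and unimodular. Once those are in place, the translation between ``the form is integral on the glue vectors'' and ``the discriminant quadratic forms cancel'' makes both implications formal, and the remainder is routine Nikulin theory (\cite{NikQuad}, \cite{Mo}).
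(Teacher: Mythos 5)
Your argument is correct: both directions are the standard glue-group (overlattice) argument — identifying $L/(M\oplus M^{\perp})$ with the graph of an anti-isometry of discriminant forms in one direction, and building $L$ as the preimage of such a graph in $M_1^\vee\oplus M_2^\vee$ in the other — and the points you flag as delicate (bijectivity of both projections via primitivity plus the index count, and integrality/evenness/unimodularity of the glued lattice) are exactly the ones that need checking and are handled correctly. The paper itself gives no proof of this proposition; it is quoted directly from \cite{NikQuad} (Proposition 1.6.1) and \cite{Mo} (Lemma 2.4), and your proof is essentially the one found in those references.
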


Next,  let us summarize  results of Nikulin involutions.
 For detail,
 see \cite{Ni1}, \cite{Ni2} and \cite{Ma}.
  Let $\mathfrak{X}$ be an algebraic $K3$ surface
 and
  $\omega$ be the unique holomorphic $2$-form on $\mathfrak{X}$ up to a constant factor.
 An involution $\iota \in {\rm Aut}(\mathfrak{X})$ is called a Nikulin involution (or symplectic involution),
 if it holds $\iota^* \omega =\omega$.
 If $\iota$ is a Nikulin involution on $\mathfrak{X}$,
 then the minimal resolution $\mathfrak{Y}=\widetilde{\mathfrak{X}/\langle \iota \rangle}$ is also an algebraic $K3$ surface.
 Namely, we have a rational quotient mapping $\mathfrak{X} \dashrightarrow \mathfrak{Y}.$
 Conversely,
 any given rational double covering $\mathfrak{X}\dasharrow \mathfrak{Y}$ of $K3$ surfaces is derived from   a Nikulin involution.

 Let $\iota$ be a Nikulin involution on $\mathfrak{X}$
 and set 
 $\mathfrak{T}^\iota = H_2(\mathfrak{X},\mathbb{Z})^{\iota_*}$.
 Then, we can see that the transcendental lattice ${\rm Tr}(\mathfrak{X})$ is a primitive sublattice of $\mathfrak{T}^\iota$.
 The orthogonal complement $(\mathfrak{T}^\iota)^\perp$ of $\mathfrak{T}^\iota$ in $L_{K3}$
 has remarkable properties.
 The lattice $(\mathfrak{T}^\iota)^\perp$ is an even negative definite lattice and has no $(-2)$-vectors
 (see \cite{Ni1} Lemma 4.2).
 Also, this is a lattice of
  rank $8$ and  determinant $2^8$.
  Moreover, we have $((\mathfrak{T}^\iota)^\perp)^\vee / (\mathfrak{T}^\iota)^\perp \simeq (\mathbb{Z}/2\mathbb{Z})^8$  (see \cite{Ni1} Proposition 10.1).
 According to these properties, 
 we can see that  $(\mathfrak{T}^\iota)^\perp $ is isometric to $E_8(-2)$ and 
 \begin{align} 
  \mathfrak{T}^\iota \simeq U\oplus U\oplus U\oplus E_8(-2).
 \end{align}
 Thus, the lattice $E_8(-2)$ is important to study Nikulin involutions.
 We will use the following result.

 \begin{lem}(\cite{Ni2} Section 2, see also \cite{Ma} Proposition 2.1) \label{LemNikulin}
 Let $\mathfrak{X}$ be an algebraic $K3$ surface.

 (1) If $\mathfrak{X}$ admits a Nikulin involution $\iota$, there is a primitive embedding  
 \begin{align}\label{embTr}
 {\rm Tr} (\mathfrak{X}) \hookrightarrow U\oplus U\oplus U\oplus E_8(-2).
 \end{align}
Also, one has the structure of  the transcendental lattice of $\mathfrak{Y}=\widetilde{\mathfrak{X}/\langle \iota \rangle}$:
\begin{align}\label{TransViaD}
{\rm Tr}(\mathfrak{Y}) \simeq \Big(({\rm Tr}(\mathfrak{X})\otimes \mathbb{Q}) \cap \Big(U\oplus U\oplus U \oplus \frac{1}{2} E_8(-2)\Big)\Big)(2)
\end{align}

(2) Conversely, if the transcendental lattice of $\mathfrak{X}$ admits a primitive embedding (\ref{embTr}),
there is a Nikulin involution $\iota$. 

 \end{lem}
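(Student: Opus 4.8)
The plan is to treat the two parts separately, using the properties of $\mathfrak{T}^{\iota}$ recalled above together with Propositions \ref{PropLatticeUnique}, \ref{PropLatticePerp} and the Torelli theorem. For part $(1)$, the primitive embedding (\ref{embTr}) follows directly from the facts recalled before the lemma: since $\iota$ is symplectic we have $\iota^{*}\omega=\omega$, and as ${\rm Tr}(\mathfrak{X})$ is the smallest primitive sublattice of $H_{2}(\mathfrak{X},\mathbb{Z})$ whose complexification contains the class of $\omega$, the isometry $\iota_{*}$ fixes that class and hence preserves ${\rm Tr}(\mathfrak{X})$; moreover a symplectic automorphism acts trivially on the transcendental lattice, so $\iota_{*}|_{{\rm Tr}(\mathfrak{X})}={\rm id}$ and ${\rm Tr}(\mathfrak{X})\subseteq\mathfrak{T}^{\iota}$. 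Primitivity of ${\rm Tr}(\mathfrak{X})$ in $H_{2}(\mathfrak{X},\mathbb{Z})$ gives primitivity in $\mathfrak{T}^{\iota}\simeq U\oplus U\oplus U\oplus E_{8}(-2)$, which is (\ref{embTr}).

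To obtain the formula (\ref{TransViaD}) I would resolve the Nikulin quotient: blow up the eight fixed points of $\iota$ to get a smooth surface $\widetilde{\mathfrak{X}}$ on which the lift $\widetilde{\iota}$ fixes each exceptional $(-1)$-curve pointwise (because $\iota$ acts as $-{\rm id}$ on the tangent space at a fixed point), so that $g:\widetilde{\mathfrak{X}}\to\mathfrak{Y}$ is a double covering branched along eight disjoint rational curves. On second cohomology one has $g_{*}g^{*}=2\,{\rm id}$, $g^{*}g_{*}={\rm id}+\widetilde{\iota}^{*}$, and $(g^{*}\alpha,g^{*}\beta)=2(\alpha,\beta)$ by the projection formula. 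Restricting to the transcendental lattices, which are untouched by the blow-up and on which $\widetilde{\iota}^{*}={\rm id}$, one obtains isometric finite-index maps $g^{*}:{\rm Tr}(\mathfrak{Y})(2)\hookrightarrow{\rm Tr}(\mathfrak{X})$ and $g_{*}:{\rm Tr}(\mathfrak{X})(2)\hookrightarrow{\rm Tr}(\mathfrak{Y})$; thus ${\rm Tr}(\mathfrak{X})\otimes\mathbb{Q}$ and ${\rm Tr}(\mathfrak{Y})\otimes\mathbb{Q}$ become identified with the quadratic form scaled by $2$. The integral statement is then extracted by comparing the chain ${\rm Tr}(\mathfrak{X})\hookrightarrow\mathfrak{T}^{\iota}\hookrightarrow L_{K3}$ with the embedding ${\rm Tr}(\mathfrak{Y})\hookrightarrow L_{K3}$: writing $L_{K3}=U^{\oplus 3}\oplus E_{8}(-1)^{\oplus 2}$ with $\iota^{*}$ fixing $U^{\oplus 3}$ and exchanging the two $E_{8}(-1)$-summands, $\mathfrak{T}^{\iota}$ consists of $U^{\oplus 3}$ together with the diagonal $E_{8}(-1)$ on which the form is doubled, and one checks that $g^{*}({\rm Tr}(\mathfrak{Y}))$ is exactly the intersection of ${\rm Tr}(\mathfrak{X})\otimes\mathbb{Q}$ with the overlattice $U^{\oplus 3}\oplus\frac{1}{2}E_{8}(-2)$ of $\mathfrak{T}^{\iota}$, the branch curves being responsible for the passage from $E_{8}(-2)$ to $\frac{1}{2}E_{8}(-2)$; dividing the form by $2$ gives (\ref{TransViaD}). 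I expect this last step — keeping track of the integral, not merely rational, structure through $g^{*}$ and the three embeddings, and isolating the contribution of the branch curves — to be the main obstacle of the lemma.

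For part $(2)$ I would use the equivalent form of Nikulin's criterion (\cite{Ni2}): $\mathfrak{X}$ carries a Nikulin involution precisely when $E_{8}(-2)$ embeds primitively into the N\'eron-Severi lattice $({\rm Tr}(\mathfrak{X}))^{\perp}\subset L_{K3}$. From a primitive embedding ${\rm Tr}(\mathfrak{X})\hookrightarrow U^{\oplus 3}\oplus E_{8}(-2)$ one produces such an embedding of $E_{8}(-2)$: compose with a primitive embedding $U^{\oplus 3}\oplus E_{8}(-2)\hookrightarrow L_{K3}$ whose orthogonal complement is $E_{8}(-2)$ (this exists by Proposition \ref{PropLatticePerp} applied to the discriminant forms and realizes $(\mathfrak{T}^{\iota})^{\perp}\simeq E_{8}(-2)$ recalled above); since the primitive embedding of the transcendental lattice of a $K3$ surface into $L_{K3}$ is essentially unique, the complement $({\rm Tr}(\mathfrak{X}))^{\perp}$ then contains a primitive copy of $E_{8}(-2)$. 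Finally, the isometry of $L_{K3}$ acting as $+{\rm id}$ on $E_{8}(-2)^{\perp}$ and $-{\rm id}$ on $E_{8}(-2)$ is well defined (the discriminant group of $E_{8}(-2)$ being $2$-torsion), fixes the period of $\mathfrak{X}$, and, after composition with a suitable element of the Weyl group generated by $(-2)$-reflections, preserves the ample cone; by the Torelli theorem it is induced by an involution $\iota\in{\rm Aut}(\mathfrak{X})$, which is then a Nikulin involution.
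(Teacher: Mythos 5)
The paper offers no proof of this lemma: it is quoted directly from \cite{Ni2} and \cite{Ma} (Proposition 2.1), so there is no in-paper argument to compare against, and your task was effectively to reconstruct the standard proof from those references. Your sketch does follow that standard route. The first claim of (1) and all of (2) are essentially sound as you present them: a finite-order symplectic automorphism acts trivially on ${\rm Tr}(\mathfrak{X})$, which therefore lies primitively in $\mathfrak{T}^\iota\simeq U^{\oplus 3}\oplus E_8(-2)$; conversely, a primitive embedding (\ref{embTr}) yields a primitive copy of $E_8(-2)$ inside ${\rm NS}(\mathfrak{X})$ (granting the essential uniqueness of the primitive embedding ${\rm Tr}(\mathfrak{X})\hookrightarrow L_{K3}$, which does require the numerical hypotheses of Proposition \ref{PropLatticeUnique} or Nikulin's refinement --- worth a sentence), and the $(\pm\,{\rm id})$-isometry glues across the discriminant because $\mathscr{A}_{E_8(-2)}$ is $2$-torsion, fixes the period, and is realized by an automorphism via the strong Torelli theorem after adjusting by the Weyl group.

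The one place the argument as written goes wrong is the final identification in (\ref{TransViaD}), precisely the step you flag as the main obstacle. You assert that $g^*({\rm Tr}(\mathfrak{Y}))$ equals $L=({\rm Tr}(\mathfrak{X})\otimes\mathbb{Q})\cap\bigl(U^{\oplus 3}\oplus\tfrac12E_8(-2)\bigr)$ and then divide the form by $2$; since $(g^*a,g^*b)=2(a,b)$, this outputs $L(1/2)$ rather than the $L(2)$ of the statement, and the two are not isometric. Note also that $L\supseteq{\rm Tr}(\mathfrak{X})$ while $g^*({\rm Tr}(\mathfrak{Y}))\subseteq{\rm Tr}(\mathfrak{X})$, so the asserted equality can only hold in degenerate cases. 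The correct bookkeeping runs through $g_*$: one shows ${\rm Tr}(\mathfrak{Y})=g_*(L)$, equivalently $g^*({\rm Tr}(\mathfrak{Y}))=2L$ because $g^*=2g_*^{-1}$ rationally on the invariant part, and since $g_*$ doubles the form this gives ${\rm Tr}(\mathfrak{Y})\simeq L(2)$. A sanity check is the covering ${\rm Kum}(\mathfrak{A})\dashrightarrow S_{\rm CD}$ of Section 5.1: there ${\rm Tr}(\mathfrak{X})=U(2)^{\oplus2}\oplus A_1(-2)$ and $g^*({\rm Tr}(\mathfrak{Y}))={\rm Tr}(\mathfrak{X})$ has index one, whereas $L$ is a strictly larger, non-integral overlattice with $L(2)=U^{\oplus2}\oplus A_1(-1)={\rm Tr}(S_{\rm CD})$. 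So the identification of which sublattice of ${\rm Tr}(\mathfrak{X})\otimes\mathbb{Q}$ corresponds to ${\rm Tr}(\mathfrak{Y})$ is exactly where your sketch needs repair; the rest is the standard argument.
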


 \subsection{Double covering for our families}

 In this subsection, let us give a relation between the $K3$ surface $\hat{K} (t)$ of (\ref{E88}), which is coming from the simple $K3$ singularity of No.88, and  the partner surface $\hat{S}(t)$ of (\ref{S(t)}), which is closely related to Hermitian modular forms and the simplest lattice $A$ of (\ref{LatticeA}) with the Kneser conditions.
 
 \begin{thm}\label{ThmKS}
 Suppose $[t]$ be a point  $T$ of (\ref{TPara}).
 
 (1) The $K3$ surface $\hat{K} (t)$ of (\ref{E88}) is birationally equivalent to the surface
 \begin{align} \label{KE} 
 K(t): Z^2=Y^3 +\Big( t_4 + \frac{t_{10}}{U^2} \Big)Y +\Big( t_6 + U^2+ \frac{ t_{12}}{U^2}  +\frac{t_{18}}{U^4} \Big).
 \end{align}
 
 (2)
 The $K3$ surface $\hat{S}(t)$ of (\ref{S(t)}) is birationally equivalent to the surface
 \begin{align} \label{SE} 
 S(t): Z^2=Y^3 + \Big(t_4 +\frac{t_{10}}{X}\Big) Y + \Big(  t_6 +X+ \frac{ t_{12}}{X} +\frac{t_{18}}{X^2}\Big). 
 \end{align}
 
 (3) One has an explicit double covering $K(t) \dashrightarrow S(t)$ given by $(U,Y,Z) \mapsto (X,Y,Z) = (U^2,Y,Z)$,
 which is coming from a Nikulin involution $\iota$ on $K(t)$. 
 \end{thm}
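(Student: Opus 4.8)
The plan is to handle (1) and (2) by explicit birational changes of variables, and to realize (3) as a quadratic base change in disguise whose deck transformation is the asserted involution; the only genuinely delicate point is the sign computation showing that this involution is \emph{symplectic} rather than anti-symplectic.

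\emph{Parts (1) and (2).} For (1) I would perform the birational substitution $U=z$, $Y=y_1/z^2$, $Z=x_1/z^3$, with polynomial inverse $z=U$, $y_1=U^2Y$, $x_1=U^3Z$: dividing (\ref{E88}) through by $z^6$ turns it literally into (\ref{KE}). Part (2) is identical: $X=x_2$, $Y=y_2/x_2^2$, $Z=z_2/x_2^3$ carries (\ref{S(t)}) to (\ref{SE}) after dividing by $x_2^6$. Since the surfaces in question are $K3$ and $K3$ surfaces are minimal, each of these birational maps is in fact an isomorphism of the associated smooth $K3$ surfaces.

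\emph{Part (3).} One checks by inspection that substituting $X=U^2$ in (\ref{SE}) reproduces (\ref{KE}) verbatim, so $\pi\colon K(t)\dashrightarrow S(t)$, $(U,Y,Z)\mapsto(U^2,Y,Z)$, is a well-defined dominant rational map, and since $U\mapsto U^2$ has degree two it is generically two-to-one, its fibres being the orbits of the involution $\iota\colon(U,Y,Z)\mapsto(-U,Y,Z)$. Hence $S(t)$ is birational to the quotient $K(t)/\langle\iota\rangle$, and it remains only to show that $\iota$ is a Nikulin involution; granting that, the theory of Nikulin involutions recalled in Section~3.1 shows that $\widetilde{K(t)/\langle\iota\rangle}$ is again a $K3$ surface, which must then be $S(t)$.

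\emph{The symplectic condition, and the main obstacle.} It is tempting to compute $\iota^{*}$ on $dU\wedge dY/Z$ in the coordinates of (\ref{KE}), but that two-form is \emph{not} the holomorphic two-form of $K(t)$, because (\ref{KE}) fails to be relatively minimal over $\mathbb{P}^1_U$ at $U=0$ and $U=\infty$. The right move is to pass, through (1), to the relatively minimal Weierstrass model (\ref{E88}), whose holomorphic two-form is the standard, nowhere-vanishing $\omega=dz\wedge dy_1/x_1$. In these coordinates $\iota$ reads $(z,y_1,x_1)\mapsto(-z,y_1,-x_1)$, the extra sign on $x_1$ coming from $x_1=U^3Z$ and $(-U)^3=-U^3$; note that $\iota$ preserves (\ref{E88}) precisely because $t_4z^4+t_{10}z^2$ and $z^8+t_6z^6+t_{12}z^4+t_{18}z^2$ are even polynomials in $z$. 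Consequently
\[
\iota^{*}\omega=\frac{d(-z)\wedge dy_1}{-x_1}=\frac{dz\wedge dy_1}{x_1}=\omega ,
\]
so $\iota^{*}\omega=\omega$: the sign from $dz\mapsto-dz$ is exactly cancelled by the sign from $x_1\mapsto-x_1$, the latter being the fibrewise $[-1]$ that the base-change quotient forces in. Since $K3$ surfaces are minimal, $\iota$ extends to a biregular involution of $K(t)$, and being symplectic it is a Nikulin involution. I expect this sign bookkeeping — in essence, remembering to read the holomorphic two-form off the relatively minimal model (\ref{E88}) and not off (\ref{KE}) — to be the only real obstacle in the argument; everything else is a direct substitution.
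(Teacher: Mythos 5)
Your proposal is correct and follows essentially the same route as the paper: parts (1) and (2) use exactly the substitutions $x_1=U^3Z,\ y_1=U^2Y,\ z=U$ and $x_2=X,\ y_2=X^2Y,\ z_2=X^3Z$, and part (3) reduces to checking $\iota^{*}\omega=\omega$ for the correct holomorphic $2$-form, which the paper writes on $K(t)$ as $\frac{dY\wedge dU}{UZ}$ --- precisely the pullback of your $\frac{dz\wedge dy_1}{x_1}$ from (\ref{E88}), so the two sign computations coincide. Your extra remark that the naive form $dU\wedge dY/Z$ would be anti-invariant is a correct elaboration of why the $1/U$ factor in the paper's form matters, not a different argument.
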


 \begin{proof}
 (1) By putting
 $$
 x_1=U^3 Z ,\quad \quad y_1=U^2 Y , \quad \quad z=U,
 $$
 the equation (\ref{E88}) is transformed to (\ref{KE}).
 
 (2) By putting
 $$
 x_2=X, \quad\quad y_2=X^2 Y, \quad \quad z_2= X^3 Z,
 $$
 the equation (\ref{S(t)}) is transformed to (\ref{SE}).
 
 (3) 
 Since the unique  holomorphic $2$-form on $K(t)$ is given by
 $\frac{dY \wedge dU}{UZ}$ up to a constant factor,
 an explicit involution given by
$(U,Y,Z)\mapsto (-U,Y,Z)$
 is a Nikulin involution  on $K(t)$.
 \end{proof}

 The family $\pi_{\mathcal{G}_1}:\mathcal{G}_1\rightarrow T$ ($\pi_{\mathcal{F}_1}:\mathcal{F}_1\rightarrow T$, resp.)  in Introduction is equal to the family of $K(t)$ ($S(t)$, resp.).

  \begin{rem}\label{RemPartner}
 The reason why $S(t)$ is named the partner surface is the following.
The subfamily $\pi^{-1}_{\mathcal{F}_1} \left(\{t_{18}=0\} \cap T \right)$ 
 is equal to the family $\mathcal{F}_{\rm CD}$ of $K3$ surfaces studied in \cite{CD} (see Proposition \ref{PropPartnerLattice} (2) and Section 5.1).
Since every member of $\mathcal{F}_{{\rm CD}}$ has a van Geemen-Sarti involution (see \cite{GS} Section 4),
it is called a van Geemen-Sarti partner of the Kummer surface (for example, see \cite{CMS}).
However, as we will see in Section 5.2,
a generic member of our family $\mathcal{F}_1$  admits no van Geemen-Sarti involutions.
Hence, we simply call  $\mathcal{F}_1$ the family of partner surfaces of $\mathcal{G}_1$.
 \end{rem}

 \begin{rem}
 The reason why we use the expressions (\ref{KE}) and (\ref{SE}) with denominators is the following.
Firstly, the double covering $K(t) \dasharrow S(t)$ is described in  a very simple form by using (\ref{KE}) and (\ref{SE}). 
Also, our expressions are natural extensions of the results of \cite{S},
in which he explicitly studies a subfamily $\mathcal{G}_{{\rm Shio}}$ of $\mathcal{G}_{{\rm Kum}}$ consisting of the Kummer surfaces attached to direct products of two elliptic curves 
and a subfamily $\mathcal{F}_{{\rm Shio}}$ of $\mathcal{F}_{{\rm CD}}$ corresponding to $\mathcal{G}_{{\rm Shio}}$ (for detail, see the beginning of Section 5).
 \end{rem}

 \begin{rem}
 The authors found the very simple expressions (\ref{KE}) and (\ref{SE}) of $K3$ surfaces
 during our research 
 in order to describe our period mappings via solutions of
a system of GKZ hypergeometric differential equations. 
This investigation will be published  elsewhere.
 \end{rem}

  \begin{prop}\label{PropKPic}
 For a generic point $[t]=(t_4:t_6:t_{10}:t_{12}:t_{18}) \in T$,
 the Picard number of the $K3$ surface $K(t)$ of (\ref{KE}) is equal to $16$.
 \end{prop}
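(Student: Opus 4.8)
The plan is to exploit the Nikulin involution $\iota: (U,Y,Z)\mapsto(-U,Y,Z)$ on $K(t)$ from Theorem \ref{ThmKS}(3) together with the known Picard number $\rho(S(t))=16$ of the partner surface (Proposition \ref{PropPartnerLattice}), so that computing $\rho(K(t))$ reduces to a rank comparison between the $\iota^{*}$-invariant and $\iota^{*}$-anti-invariant parts of $H^{2}(K(t))$.

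First I would pass from the rational double covering to a genuine one. Since $\iota$ is a Nikulin involution it has exactly $8$ isolated fixed points; blowing them up gives $\phi: \widetilde{K}\to K(t)$ with disjoint exceptional curves $e_{1},\dots,e_{8}$, the lift $\widetilde{\iota}$ fixes each $e_{i}$ pointwise, and $\pi: \widetilde{K}\to \widetilde{K}/\langle\widetilde{\iota}\rangle=S(t)$ is a finite double covering branched along the $8$ disjoint $(-2)$-curves $C_{i}=\pi(e_{i})$. Now comes the rank count: $\pi^{*}$ maps ${\rm NS}(S(t))_{\mathbb Q}$ isomorphically onto the invariant part ${\rm NS}(\widetilde{K})_{\mathbb Q}^{\widetilde{\iota}}$, and since ${\rm NS}(\widetilde{K})_{\mathbb Q}=\phi^{*}{\rm NS}(K(t))_{\mathbb Q}\oplus\bigoplus_{i}\mathbb{Q}\,e_{i}$ with $\widetilde{\iota}$ acting as $\iota^{*}$ on the first summand and fixing each $[e_{i}]$, we get ${\rm rank}\,{\rm NS}(\widetilde{K})^{\widetilde{\iota}}={\rm rank}\,{\rm NS}(K(t))^{\iota}+8$. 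Combined with $\rho(S(t))=16$ this yields ${\rm rank}\,{\rm NS}(K(t))^{\iota}=8$.

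To finish, I would bring in the transcendental lattice. Inside $H^{2}(K(t),\mathbb{Q})$ the decomposition ${\rm NS}(K(t))_{\mathbb Q}\oplus{\rm Tr}(K(t))_{\mathbb Q}$ is orthogonal and $\iota^{*}$-stable, so $H^{2}(K(t),\mathbb{Q})^{\iota}={\rm NS}(K(t))_{\mathbb Q}^{\iota}\oplus{\rm Tr}(K(t))_{\mathbb Q}^{\iota}$; by the Nikulin theory recalled before Lemma \ref{LemNikulin} one has ${\rm Tr}(K(t))\subset\mathfrak{T}^{\iota}$, hence ${\rm Tr}(K(t))_{\mathbb Q}^{\iota}={\rm Tr}(K(t))_{\mathbb Q}$, while ${\rm rank}\,\mathfrak{T}^{\iota}={\rm rank}\,(U^{\oplus 3}\oplus E_{8}(-2))=14$. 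Comparing ranks gives ${\rm rank}\,{\rm Tr}(K(t))=14-8=6$, that is $\rho(K(t))=22-6=16$.

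The individual steps are routine; the one point needing a little care is the rank bookkeeping in the second paragraph, namely that the $8$ curves $C_{i}$ really add $8$ independent classes to ${\rm NS}(S(t))_{\mathbb Q}$ on top of $\pi^{*}{\rm NS}(K(t))_{\mathbb Q}^{\iota}$, which holds precisely because the $e_{i}$ are $\phi$-exceptional. As a cross-check (and a more hands-on alternative) one can instead work with the elliptic fibration $\hat{K}(t)\to\mathbb{P}^{1}$ of (\ref{E88}): its singular fibres are generically of type $IV^{*}$ over $U=\infty$, $IV$ over $U=0$, and twelve fibres of type $I_{1}$ (consistent with $e(\hat{K}(t))=8+4+12=24$), so the Shioda--Tate formula gives $\rho(K(t))=10+{\rm rank}\,MW(K(t))$; one then needs the upper bound $\rho(K(t))\le16$ — which follows from the injection ${\rm Tr}(S(t))_{\mathbb Q}\hookrightarrow{\rm Tr}(K(t))_{\mathbb Q}$ induced by the double covering — together with the exhibition of six independent sections, exactly the explicit computation that the first argument sidesteps.
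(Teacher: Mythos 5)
Your proposal is correct, and it reaches the conclusion by the same underlying mechanism as the paper — transferring the rank computation across the Nikulin involution $\iota:(U,Y,Z)\mapsto(-U,Y,Z)$ of Theorem \ref{ThmKS}~(3) to the partner surface $S(t)$, whose lattice data is known from Proposition \ref{PropPartnerLattice} — but the execution is genuinely different. The paper's proof is two lines: it cites the formula (\ref{TransViaD}) of Lemma \ref{LemNikulin}, which exhibits ${\rm Tr}(S(t))$ as a finite-index rescaling of a lattice commensurable with ${\rm Tr}(K(t))$, so the two transcendental ranks agree and $\rho(K(t))=22-6=16$. You instead re-derive this rank equality by hand: blowing up the eight fixed points, identifying $\pi^{*}{\rm NS}(S(t))_{\mathbb Q}$ with the $\widetilde\iota$-invariant part of ${\rm NS}(\widetilde K)_{\mathbb Q}$, and combining ${\rm rank}\,{\rm NS}(K(t))^{\iota}=16-8=8$ with ${\rm rank}\,\mathfrak T^{\iota}=14$ and ${\rm Tr}(K(t))\subset\mathfrak T^{\iota}$ to get ${\rm rank}\,{\rm Tr}(K(t))=6$. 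Every step of this bookkeeping checks out (in particular $d\iota_p=-{\rm id}$ at each fixed point, so $\widetilde\iota$ does fix the exceptional curves pointwise, and the eight classes $e_i$ are independent of $\phi^{*}{\rm NS}(K(t))_{\mathbb Q}$), so your argument is a valid, more self-contained substitute: it only uses the structural facts about $\mathfrak T^{\iota}$ recalled before Lemma \ref{LemNikulin}, at the cost of being longer. Your concluding Shioda--Tate cross-check also matches the fibre data the paper records in Remark \ref{remsingulartypesandLB} ($IV$ over $0$, $IV^{*}$ over $\infty$, twelve $I_1$), and you correctly flag that it would require exhibiting six independent sections, which neither you nor the paper attempts.
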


 \begin{proof}
 By Proposition \ref{NSTrProp} and Lemma \ref{LemNikulin} (especially (\ref{TransViaD})),
 the rank of ${\rm Tr}(K(t))$ is equal to that of ${\rm Tr}(S(t))$.
 From Proposition \ref{NSTrProp}, we have the assertion.
 \end{proof}

 \section{Geometric construction  of  transcendental lattice}

In this section,
we will determine the lattice structure of the  surface $K(t)$ of (\ref{KE}).

According to Proposition \ref{PropKPic},
the  rank of the transcendental lattice of $K(t)$ is generically equal to $6$.
If we had a double covering $S(t) \dashrightarrow K(t), $
we could determine the transcendental lattice of $K(t)$
by applying Lemma  \ref{LemNikulin} to Proposition \ref{NSTrProp}.
However, in practice,
$S(t)$ does not have any Nikulin involutions,
as we will see in Section 5.2.
Therefore,
in this section, we will construct
$2$-cycles on $K(t)$ geometrically,
in order to
determine the lattice structure of $K(t)$ directly.
Eventually, we will prove the following theorem.

 \begin{thm}\label{ThmTrK(t)}
 (1) For a generic point $[t]=(t_4:t_6:t_{10}:t_{12}:t_{18}) \in T$,
 the transcendental lattice
  of the $K3$ surface $K(t)$ of (\ref{KE})
  is given by
 the intersection matrix $A(2)=U(2)\oplus U(2) \oplus A_2(-2)$. 
 This is a primitive sublattice of $ U\oplus U \oplus U \oplus E_8(-2).$
 
 (2)
 For a generic point $[t] \in T$, 
 the N\'eron-Severi lattice of $K(t)$ is isometric to the lattice $U(2) \oplus E_8(-1) \oplus L_6$,
 where
 \begin{align}\label{M6}
 L_6=
 \begin{pmatrix} 
 -2 & 1 & 1 & 0 & 0 & 0\\
 1&-4&0&2&0&0\\
1 &0&-4&0&2&0\\
 0&2&0&-4&2&0\\
 0&0&2&2&-4&2\\
 0&0&0&0&2&-4\\
 \end{pmatrix}.
 \end{align}
 \end{thm}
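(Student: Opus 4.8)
The plan is to determine the transcendental lattice ${\rm Tr}(K(t))$ first by an explicit geometric construction, and then deduce the N\'eron--Severi lattice by the arithmetic of discriminant forms.

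\textbf{Step 1: The transcendental lattice of $K(t)$.} By Proposition \ref{PropKPic} we already know ${\rm rank}\,{\rm Tr}(K(t))=6$. The Nikulin involution $\iota:(U,Y,Z)\mapsto(-U,Y,Z)$ of Theorem \ref{ThmKS} acts on $H_2(K(t),\mathbb{Z})$, and by Lemma \ref{LemNikulin} we have the formula
\begin{align*}
{\rm Tr}(S(t)) \simeq \Big(({\rm Tr}(K(t))\otimes\mathbb{Q}) \cap \Big(U\oplus U\oplus U\oplus \tfrac{1}{2}E_8(-2)\Big)\Big)(2).
\end{align*}
Wait --- this is the wrong direction, since it is $K(t)$ that double-covers $S(t)$. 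Instead I would use the reverse implication: $\iota$ acts on $H_2(K(t),\mathbb Z)$ with $\mathfrak T^\iota\simeq U\oplus U\oplus U\oplus E_8(-2)$, and ${\rm Tr}(K(t))$ is a primitive sublattice of $\mathfrak T^\iota$. To pin it down, I would push $2$-cycles from the quotient back up: pulling back the holomorphic $2$-form gives $\pi^*\omega^S_{[t]} = c\,\omega^K_{[t]}$, so the period map of $K(t)$ factors through that of $S(t)$ composed with multiplication by $2$ on intersection numbers (the map $S(t)\dashrightarrow K(t)$ doubles self-intersections of transcendental cycles, exactly as in the classical Kummer-sandwich picture). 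Concretely I would take the six explicitly constructed cycles $\Delta_{1,t},\dots,\Delta_{6,t}$ on $\hat S(t)$ from \cite{NS}, lift each to a $\iota$-invariant cycle $\tilde\Delta_{j,t}$ on $K(t)$, and compute the Gram matrix of $\tilde\Delta_{j,t}$: since the covering has degree $2$ and branches along curves disjoint (after resolution) from the transcendental cycles, each intersection number gets multiplied by $2$, yielding Gram matrix $2\cdot A = A(2) = U(2)\oplus U(2)\oplus A_2(-2)$. Primitivity in $\mathfrak T^\iota$ then follows because $A(2)$ is already generated by the images of a basis of $H^2_{\rm tr}$, and the embedding $A(2)\hookrightarrow U\oplus U\oplus U\oplus E_8(-2)$ can be exhibited directly (e.g. send $U(2)\oplus U(2)$ into the first two $U$-summands doubled inside $E_8(-2)$... ) --- more carefully, I would verify primitivity via discriminant forms: $q_{A(2)}$ has length $6$, it embeds primitively into the unimodular-after-rescaling lattice, and no overlattice of $A(2)$ of the same rank embeds, which a short computation of the discriminant group $(\mathbb Z/2)^6$ settles.

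\textbf{Step 2: The N\'eron--Severi lattice of $K(t)$.} Since $K(t)$ is an algebraic $K3$ surface, ${\rm NS}(K(t)) = {\rm Tr}(K(t))^\perp$ inside $L_{K3}=II_{3,19}$, which is unimodular. By Proposition \ref{PropLatticePerp}, $q_{{\rm NS}(K(t))} \simeq -q_{{\rm Tr}(K(t))} = -q_{A(2)}$, and the signature is $(1,15)$ (Picard number $16$ by Proposition \ref{PropKPic}). Now I would check that the candidate lattice $U(2)\oplus E_8(-1)\oplus L_6$ has the right invariants: its signature is $(1,1)+(0,8)+(1,5)$ --- wait, $L_6$ must have signature $(1,5)$; one computes its determinant and Sylvester inertia directly from (\ref{M6}). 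Then compute $q_{U(2)\oplus E_8(-1)\oplus L_6} = q_{U(2)}\oplus q_{L_6}$ (the $E_8(-1)$ part is unimodular and contributes nothing), and verify it equals $-q_{A(2)}$. Finally, to conclude that these invariants determine the lattice uniquely I would invoke Proposition \ref{PropLatticeUnique}: here $s_+=1>0$, $s_-=15>0$, and $l(\mathscr A_{{\rm NS}}) = l(\mathscr A_{A(2)}) = 6 \le 16-2 = {\rm rank}-2$, so ${\rm NS}(K(t))$ is the unique lattice with invariants $(1,15,-q_{A(2)})$ --- hence isometric to $U(2)\oplus E_8(-1)\oplus L_6$ once the latter is shown to have those invariants.

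\textbf{Main obstacle.} The delicate part is Step 1: producing the $2$-cycles on $K(t)$ and rigorously justifying that their Gram matrix is exactly $A(2)$ rather than some sublattice or overlattice. This requires understanding precisely how the explicit cycles of \cite{NS} on $\hat S(t)$ interact with the branch locus of $K(t)\dashrightarrow S(t)$ after passing to the minimal resolutions --- in particular checking that no cycle meets the exceptional curves of the resolution in a way that spoils the clean factor-of-$2$ behavior, and that the lifted cycles together with ${\rm NS}$ actually span $H_2(K(t),\mathbb Z)\otimes\mathbb Q$. Identifying the explicit form of $L_6$ in (\ref{M6}) is then a matter of choosing a convenient basis of divisor classes (the section, fibre components, and the $E_8$-chain together with the extra $(-2)$-curve coming from the $t_{18}$-deformation) and computing intersection numbers; this is routine once Step 1 fixes the abstract isometry type, but matching the specific Gram matrix requires care with the chosen generators.
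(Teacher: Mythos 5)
Your Step 2 is essentially the paper's argument (Nikulin's uniqueness criterion applied to the invariants $(1,15,-q_{A(2)})$); the only difference is that the paper pins down the discriminant form of $U(2)\oplus E_8(-1)\oplus L_6$ not by computing $q_{L_6}$ directly but by exhibiting explicit primitive embeddings $U(2)\hookrightarrow U\oplus U$ and $U(2)\oplus A_2(-2)\hookrightarrow U\oplus E_8(-1)$ whose orthogonal complements are $U(2)$ and $L_6$, so that the candidate lattice is realized as an orthogonal complement of a primitive copy of $A(2)$ in $L_{K3}$ and Proposition \ref{PropLatticePerp} applies.

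Step 1, however, has a genuine gap. Pulling back the cycles $\Delta_{1,t},\dots,\Delta_{6,t}$ from $S(t)$ along the degree-$2$ map $K(t)\dashrightarrow S(t)$ does produce, by the projection formula, classes whose Gram matrix is $2A=A(2)$; but this only shows that ${\rm Tr}(K(t))$ \emph{contains} a finite-index sublattice isometric to $A(2)$. It does not rule out that ${\rm Tr}(K(t))$ is a proper even overlattice of that sublattice, and such overlattices genuinely exist: the isotropic element $\tfrac{1}{2}E$ of $\mathscr{A}_{U(2)}$ yields the index-$2$ even overlattice $U\oplus U(2)\oplus A_2(-2)\supset A(2)$, so "a short computation of the discriminant group" cannot settle the question. (Your identification of $\mathscr{A}_{A(2)}$ with $(\mathbb{Z}/2)^6$ is also wrong: the $A_2(-2)$ summand contributes $\mathbb{Z}/2\times\mathbb{Z}/6$, so the group has order $192$ and a $3$-torsion part.) Deciding between $A(2)$ and its overlattices requires integral information about $H_2(K(t),\mathbb{Z})$, and this is precisely what the paper supplies and what your proposal lacks: it constructs twelve explicit $2$-cycles $\Gamma_i,\Gamma_i'$ on a reference surface $\hat K_0$ (built from monodromy data of the elliptic fibration over the \emph{rational} quotient $\Sigma_0$ by the anti-symplectic involution $z\mapsto -z$, not from $S(t)$ at all), computes the Gram matrix of $G_i=\Gamma_i-\Gamma_i'$, and then certifies primitivity by producing dual cycles $C_1,\dots,C_6$ with $\det\bigl((C_i\cdot G_j)\bigr)=1$, which forces $\{G_i\}$ to be a $\mathbb{Z}$-basis of $H_2(\hat K_0,\mathbb{Z})/{\rm NS}(\hat K_0)$ and hence excludes any overlattice. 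Without an analogue of that unimodular pairing computation, your Step 1 establishes only ${\rm Tr}(K(t))\supseteq A(2)$ up to finite index, which is not enough.
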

 
 \begin{rem}\label{RemB}
 Belcastro \cite{B}
 determines the structures of the lattices for the $K3$ surfaces which are coming from resolutions of simple $K3$ singularities.
 Especially, in the list of \cite{B},
 it is stated that the transcendental lattice corresponding to the $K3$ surfaces for the singularity of No.88
  is isomorphic to $A$ of (\ref{LatticeA}).
 If that is true,
 the family of $K(t)$ should coincide with that of $S(t)$.
 
 We tried to follow such a story, but it was not successful.
 On the contrary,
 we found that the transcendental lattice of the family of $K(t)$ should be $A(2)$
 and this family naturally contains the family of the algebraic Kummer surfaces. 
 
 Thus,
 at least for the case of No.88,
 the statement of \cite{B} is not correct.
 Our Theorem \ref{ThmTrK(t)} gives a correction of this defect.
 \end{rem}

In order to calculate the transcendental lattice of $K(t)$,
 we consider the elliptic $K3$ surface $\hat{K} (t)$ of (\ref{E88}), which is birationally equivalent to $K(t)$.
There is an involution 
\begin{eqnarray} \label{eq: frakj}
\jmath : (x_1,y_1,z)\mapsto  (x_1,y_1,-z).
\end{eqnarray}
Also, on $\hat{K} (t)$,  there is  the unique  holomorphic 2-form 
\begin{eqnarray} \label{eq: holoformS0}
\omega_t=\omega = \frac{dy_1\wedge dz}{x_1}
\end{eqnarray}
up to a constant factor.
Then, we have
$
\jmath^{\ast} (\omega) =-\omega.
$
Namely,
$\jmath$
is not a Nikulin involution on the $K3$ surface (\ref{E88}).
The quotient surface
\begin{align}\label{Sigma(t)}
\Sigma(t):\quad  x_1^2=y_1^3 + ( t_4 w^2 +t_{10} w) y_1 +(w^4 +t_6 w^3 +t_{12} w^2 +t_{18} w) 
\end{align}
 by the involution $\jmath$ is a  rational surface.

\begin{rem}
We remark that the period mapping for our family $\mathcal{F}_1$ is connected to the complex reflection group of No.33  in the list of \cite{ST}.
For detail, see \cite{NS}.

On the other hand, Sekiguchi \cite{Sek} studies the complex reflection groups of No.33 from the viewpoint of Frobenius potentials
and obtains a family of rational surfaces given by the Weierstrass equation
 $$
 z^2=f_{E_6(1)} =y^3 + (  t_2 x^2 + t_5 x) y + (x^4 + t_3 x^3 + t_6 x^2 + t_9 x)  + s_4 y^2.
 $$
 If  $s_4=0$, 
 we have our rational surface $\Sigma(t)$ of (\ref{Sigma(t)}).
 Although our method and viewpoint are very different from those of Sekiguchi,
 our surface $\Sigma(t)$ is very close to his surface.
\end{rem}

By putting $(t_4,t_6,t_{10},t_{12},t_{18})=(0,0,3,-1,-1)$,
we take a generic member
\begin{eqnarray} \label{eq:K0}
&&
\hat{K}_0:\quad x_1^2=y_1^3+3z^2y_1+z^8-z^4-z^2
\end{eqnarray}
of the family of the  surfaces of (\ref{E88}).
Namely, we regard $\hat{K}_0$ as a reference surface of the family.
By virtue of a consideration of local period mapping,
as in the argument in \cite{Na} Section 1.3,
we can see that
the lattice structure for the reference surface $\hat{K}_0$ is valid for generic members of the family of $K(t)$ of (\ref{KE}). 
We  obtain the corresponding quotient surface $\Sigma_0$ of $\hat{K}_0$ by  $\jmath$:
\begin{eqnarray}  \label{eq:surfaceSigma0}
&&
\Sigma_0:\quad x_1^2=y_1^3+3w y_1+w^4-w^2-w.
\end{eqnarray}
 Properties of $\Sigma_0$ will be useful to construct  $2$-cycles on $\hat{K}_0$.

\subsection{Rational surface $\Sigma_0$}

Let us observe the elliptic surface $(\Sigma_0, \pi , \mathbb{P}^1(\mathbb{C}))$ over $w$-sphere $\mathbb{P}^1(\mathbb{C})$.
Let $\tau_1, \tau_2, \tau_3, \tau_4, \tau_5$ and $\tau_6$ be the solutions of 
$
1 + 6 w + w^2 - 2 w^3 - 2 w^4 + w^6 = 0
$
such that approximate values are 
 $
  (\tau_1,\tau_2,\tau_3,\tau_4,\tau_5,\tau_6)\fallingdotseq (-1.40553,-0.173094,-0.662487 + 1.14871 i, -0.662487 - 1.14871 i, 1.4518 + 0.479369 i,1.4518 - 0.479369 i).
  $ 
 We have singular fibres of $\Sigma_0$ of type $I_1$   over these points.
 Moreover, there is a singular fiber of type $II$ ($IV$, resp.) over 
 $w=\tau_0=0$ ($w=\tau_{\infty}=\infty$, resp.).
 Set $\tau_b=-0.5$ be a base point on the $w$-space.
We have a regular fiber
$$
\pi^{-1}(\tau_b): x_1^2=y_1^3   - 1.5 y_1 + 0.3125.
$$
It has four ramification points on $y_1$-plane:
$
(v_1,v_2,v_3,v_m)\fallingdotseq (-1.31799,0.214955,1.10304, \infty).
$
On $\pi^{-1}(\tau_b)$, set an oriented closed arc $\gamma_1$ ($\gamma_2$, resp.) such that its projection goes around 
$v_1$ and $v_2$ ($v_2$ and $v_3$, resp.) in the positive direction. 
We define their branch and orientation so that the intersection number $(\gamma_1\cdot \gamma_2)$ is equal to $1$ 
(see Figure 1).
\begin{figure}[h]
\center
\includegraphics[scale=0.7]{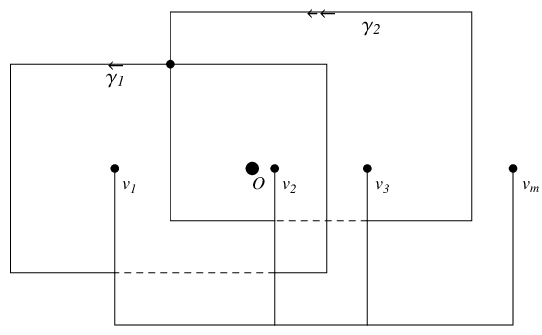}
\caption{Cycles $\gamma_1, \gamma_2$ on $\pi^{-1}(\tau_b)$}
\end{figure}

We make an oriented closed circuits $\delta_i$ $(i=0,1,\ldots, 6, \infty )$ on the $w$-plane which goes around $\tau_i $ in the positive 
direction with the starting point  $\tau_b$  (see Figure 2).

\begin{figure}[h]
\center
\includegraphics[scale=0.8]{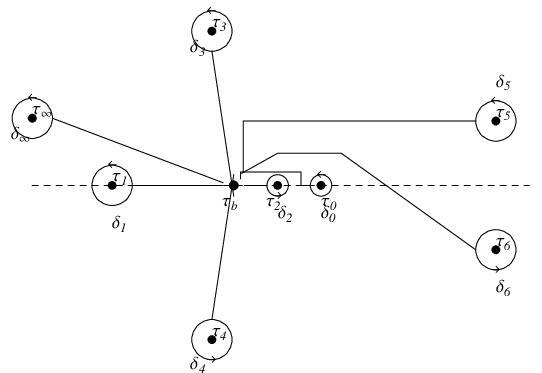}
\caption{Circuits $\delta_i (i=0,1,\ldots, 6, \infty )$}
\end{figure}

Let $M_i$ be the matrix  which represents the monodromy transformation of the homology basis ${}^t( \gamma_1, \gamma_2)$ 
by the continuation of the fiber $\pi^{-1}(\tau_{b})$ along $\delta_i$. 
We call it the circuit matrix. 
They are given in Table 2.
 Note that they are assumed to be left actions.
\begin{table}[h]
\center
{\small
\begin{tabular}{ccccccccc} \hline
&$\tau_i$&$\tau_1$&$\tau_2$&$\tau_3$&$\tau_4$&$\tau_5$&$\tau_6$&\\ \hline
&matrix&$\begin{pmatrix}1&-1\cr0&1\end{pmatrix}$&$\begin{pmatrix}1&-1\cr0&1\end{pmatrix}$&$\begin{pmatrix}1&0\cr1&1\end{pmatrix}$&
$\begin{pmatrix}1&0\cr1&1\end{pmatrix}$&$\begin{pmatrix}1&-1\cr0&1\end{pmatrix}$&$\begin{pmatrix}1&0\cr1&1\end{pmatrix}$&
\\ \hline
&V. cycle&$\gamma_2$&$\gamma_2$&$\gamma_1$&$\gamma_1$&$\gamma_2$&$\gamma_1$
\\ \hline
\end{tabular}
\vskip 1mm
\begin{tabular}{ccccccccc} \hline
&$\tau_i$&$\tau_0$&$\tau_{\infty}$&\\ \hline
&matrix&$\begin{pmatrix}1&-1\cr1&0\end{pmatrix}$&$\begin{pmatrix}-1&-1\cr1&0\end{pmatrix}$&
\\ \hline
&V. cycle&any cycle&any cycle
\\ \hline
\end{tabular}}
\caption{Circuit matrix of $\delta_i$}
\end{table}
Here, ``V. cycle" means the 1-cycle which vanishes at $\tau_i$.
We have
$
M_1M_4M_2M_0M_6M_5M_3M_{\infty} =\rm{id},
$
since
$\delta_1\delta_4\delta_2\delta_0\delta_6\delta_5\delta_3\delta_{\infty} $
is homotopic to  zero.

\subsection{Construction of  cycles on  $\hat{K}_0$}
We have the elliptic $K3$ surface $(\hat{K}_0, \pi_{z}, \mathbb{P}^1(\mathbb{C}))$ of (\ref{eq:K0}),
where the base space $\mathbb{P}^1(\mathbb{C})$ is the $z$-sphere and $\pi_{z}$ is the natural projection.
There is
the double covering  $\hat{K}_0 \rightarrow \Sigma_0$ given by the involution (\ref{eq: frakj}).
Recalling the singular fibers over $\Sigma_0$,
we find singular fibers of the elliptic $K3$ surface $\hat{K}_0$ over the points
$$
\zeta_j = \sqrt{\tau_j} ,\quad  \zeta_j'=-\sqrt{\tau_j} \quad(j\in \{1,\ldots ,6\}),\quad \quad
\zeta_0=0 , \quad 
\zeta_{\infty} =\infty.
$$

\begin{rem} \label{remsingulartypesandLB}
We have a singular fiber of type  $IV$ ($IV^*$, resp.)  over $\zeta_0$ ($\zeta_\infty$, resp.).
So, besides the general fiber and the global section, 
we have two (six, resp.) components 
of $\pi_{z}^{-1}(0)$ ($\pi_{z}^{-1}(\infty)$, resp.). 
Hence, we obtain a sublattice of ${\rm NS}(\hat{K}_0)$ of rank 10 which is generated by these divisors.
We denote it by $L_B$.
\end{rem}

For $i\in\{1,\cdots,0,\infty\}$,
let $\ell_i$ be a line segment
connecting a fixed point $\underline{B}$ in the  $w$-sphere
and $\tau_i$ respectively.
We lift up the cut lines  $\ell_1,\ldots, \ell_6,\ell_0,\ell_{\infty}$,
 to 
 $m_1,\ldots, m_6, m_1',\ldots , m_6', m_0,m_{\infty}$. 
 Those are indicated in Figure 3. 
We take an oriented arc $\alpha$ from $\zeta_b=\sqrt{\tau_b}$ to  $\zeta_b'=-\sqrt{\tau_b}$ in the simply connected region 
$\mathbb{P}'= \mathbb{P}^1(\mathbb{C}) - (\bigcup_i m_i \cup \bigcup_i m_i' \cup {\tilde{BB'}})$, where $B$ ($B'$, resp.) is the initial points of 
the cut lines $m_1, m_2, m_4, m_3',m_5',m_6'$ and $m_0$  ($m_3, m_5,m_6,m_1',m_2',m_4'$ and $m_\infty$, resp.) and ${\tilde{BB'}}$ is an arc connecting them indicated in Figure 3.
 \begin{figure}[h]
\center
\includegraphics[scale=1.0]{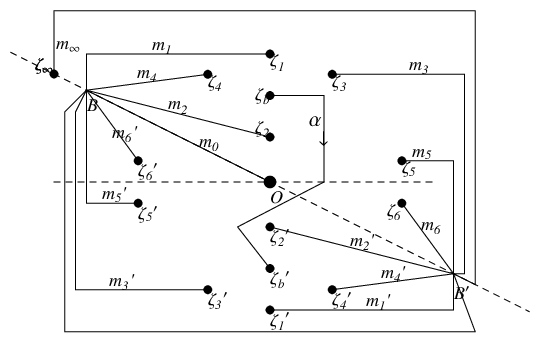}
\caption{A simply connected region $\mathbb{P}'$}
\end{figure}

We make the liftings $\delta_{zi}$ $(i= 1,\ldots,6)$ of $\delta_i$ indicated in 
Figure 4. 
In any case,
 we take $\zeta_b$ as their starting point.
By a similar manner, we make circuits $\delta_{zi}'$ starting from $\zeta_b'$ as indicated in Figure 4.
Also, we take a closed circle $\delta_{z0}$ ($\delta_{z\infty}$, resp.) around $z=0$ ($z=\infty$, resp.).
 \begin{figure}[h]
\center
\includegraphics[scale=0.84]{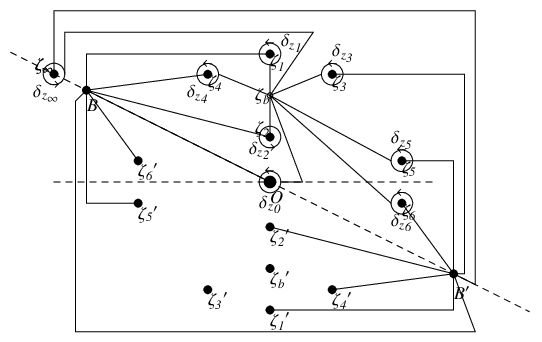}
\includegraphics[scale=0.84]{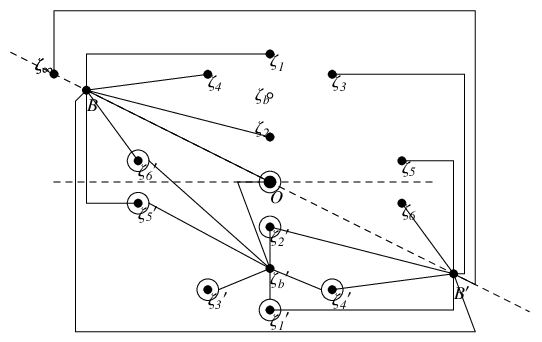}
\caption{Circuits $\delta_{zi}$ and $\delta_{zi}'$}
\end{figure}

Then,  $\pi_z^{-1}(\zeta_b)$ and $\pi_z^{-1}(\zeta_b')$ are the same elliptic curve which is identified with
$\pi^{-1}(\tau_b)$ on $\Sigma_0$. 
So we can define $\gamma_1$ and $\gamma_2$  on them by this identification.
On the other hand,  the involution $\jmath$ in ({\ref{eq: frakj}}) induces an isomorphism 
$
\pi_z^{-1}(\zeta_b) \simeq \pi_z^{-1}(\zeta_b')
$
on $\hat{K}_0$. 
So,  we obtain the $1$-cycles
$
\gamma_1'=\jmath (\gamma_1)$ and $\gamma_2' =\jmath (\gamma_2)
$
on $\pi_z^{-1}(\zeta_b')$ .

We have the local monodromy of the system $\{ \gamma_1,\gamma_2\}$
along every circuit $\delta_{zi}$. We denote the matrix which represents it by $M_{zi}$. 
By observing the covering structure $\hat{K}_0 \rightarrow \Sigma_0$ and Figure 2,
we have $M_{zi} =M_i \ (i=1,\ldots, 6)$ and $M_{z0} =M_0^2$.

\begin{table}[h]
\center
{\small
\begin{tabular}{ccccccccc} \hline
&$\delta_{zi}$&$\delta_{z1}$&$\delta_{z2}$&$\delta_{z3}$&$\delta_{z4}$&$\delta_{z5}$&$\delta_{z6}$&\\ \hline
&matrix $M_{zi}$ &$\begin{pmatrix}1&-1\cr0&1\end{pmatrix}$&$\begin{pmatrix}1&-1\cr0&1\end{pmatrix}$&$\begin{pmatrix}1&0\cr1&1\end{pmatrix}$&
$\begin{pmatrix}1&0\cr1&1\end{pmatrix}$&$\begin{pmatrix}1&-1\cr0&1\end{pmatrix}$&$\begin{pmatrix}1&0\cr1&1\end{pmatrix}$&
\\ \hline
&V. cycle&$\gamma_2$&$\gamma_2$&$\gamma_1$&$\gamma_1$&$\gamma_2$&$\gamma_1$
\\ \hline
\end{tabular}

\vspace{2mm}
\begin{tabular}{ccccccccc} \hline
&$\delta_{zi}$&$\delta_{z0}$&$\delta_{z\infty}$&\\ \hline
&matrix $M_{zi}$&$\begin{pmatrix}0&-1\cr1&-1\end{pmatrix}$&$\begin{pmatrix}0&1\cr -1&-1\end{pmatrix}$&
\\ \hline
&V. cycle&any cycle&any cycle
\\ \hline
\end{tabular}
}
\caption{Circuit matrix $M_{zi}$}
\end{table}

For the local monodromy induced from $\delta_{zi}'$ on the system $\{ \gamma_1', \gamma_2'\}$
 is just the copy of those in Table 3, we have the same matrix for $i\in \{1,\ldots ,6\}$.
We have the transformation between two systems $\{ \gamma_1,\gamma_2\}$ and $\{ \gamma_1',\gamma_2'\}$ 
on $\pi_z^{-1} (\zeta_b')$ induced from the arc $\alpha$:
\begin{eqnarray} \label{eq: gammadashtransform}
&&
\begin{pmatrix} \gamma_1'\cr \gamma_2' \end{pmatrix} 
=M_{\alpha} \begin{pmatrix} \gamma_1\cr \gamma_2\end{pmatrix} ,\quad\quad
\text{where } M_\alpha=\begin{pmatrix} 1&-1\cr 1&0\end{pmatrix}  .
\end{eqnarray}
Then, the local monodromy $M_{zi}'$ induced from $\delta_{zi}'$ on the system $\{ \gamma_1, \gamma_2\}$
is given by
$
M_{zi}'=M_{\alpha}^{-1} M_{zi} M_{\alpha}.
$
Hence, we obtain Table 4 for them.
\begin{table}[h]
\center
{\small
\begin{tabular}{ccccccccc} \hline
&$\delta_{zi}'$&$\delta_{z1}'$&$\delta_{z2}'$&$\delta_{z3}'$&$\delta_{z4}'$&$\delta_{z5}'$&$\delta_{z6}'$&\\ \hline
&matrix $M_{zi}'$ &$\begin{pmatrix}1&0\cr1&1\end{pmatrix}$&$\begin{pmatrix}1&0\cr1&1\end{pmatrix}$&$\begin{pmatrix}2&-1\cr1&0\end{pmatrix}$&
$\begin{pmatrix}2&-1\cr1&0\end{pmatrix}$&$\begin{pmatrix}1&0\cr1&1\end{pmatrix}$&$\begin{pmatrix}2&-1\cr1&0\end{pmatrix}$&
\\ \hline
&V. cycle&$\gamma_1$&$\gamma_1$&$\gamma_1-\gamma_2$&$\gamma_1-\gamma_2$&$\gamma_1$&$\gamma_1-\gamma_2$
\\ \hline
\end{tabular}
}
\caption{ Circuit matrix $M_{zi}'$}
\end{table}

\begin{rem}
We can determine $M_{\alpha}$ by the facts that $M_{z0}'=M_{z0}$ and that the fiber $\pi_z^{-1}(0)$ is a singular 
fiber of type IV.
By the relation
$
M_{0z}M_{z6}'M_{z5}'M_{z3}'M_{z1}'M_{z4}'M_{z2}'M_{z6}M_{z5}M_{z3}M_{z\infty}M_{z1}M_{z4}M_{z2} =\rm{id},
$
the matrix $M_{z\infty}$ is determined as in Table 3.
\end{rem}

Let $\rho$ be an oriented arc in the $z$-sphere $\mathbb{P}^1(\mathbb{C})$ from $\xi$ to $\eta$,
where $\xi $ and $\eta$ are two points in $\mathbb{P}^1(\mathbb{C})$.
Take $\gamma$ be a $1$-cycle on the fibre $\pi_z^{-1}(\xi)$.
We let  $U(\rho,\gamma)$ denote the $2$-chain obtained by the analytic continuation of $\gamma$ along $\rho$.
Here, we define the orientation of $U(\rho,\gamma)$ as the ordered pair of the orientation of $\rho$ and that of $\gamma$ as in \cite{MSY} Chapter 2.
We note that
if $\gamma$ is a vanishing cycle at $\xi$ and $\eta$,
then $U(\rho,\gamma)$ becomes to be a $2$-cycle on $\hat{K}_0$.

Let us construct a $2$-cycle $\Gamma_1$.
We take the  points $Q_1,Q_2,Q_3$ and $Q_4$ on the $z$-plane indicated in Figure 5. 
Then, we make an ``$8$-shaped'' 
closed arc $\rho_1$ connecting them in this order returning to the initial point $Q_1$ (see Figure 5). We take a 
$1$-cycle $\gamma_1$ on the fiber $\pi_z^{-1}(Q_1)$. 
Then, 
 we obtain a
 $2$-cycle $\Gamma_1=U(\rho_1,\gamma_1)$.
 According to Table 2, we can see that 
the continuation returns back to the original $\gamma_1$. 
Namely, the cycle $\gamma_1$ changes to $\gamma_1-\gamma_2$ after crossing the cut line $m_1$ and it returns back 
to $\gamma_1$ after crossing $m_2$.
 So $\Gamma_1$ is a $2$-cycle on $\hat{K}_0$.  
\begin{figure}[h]
\center
\includegraphics[scale=0.92]{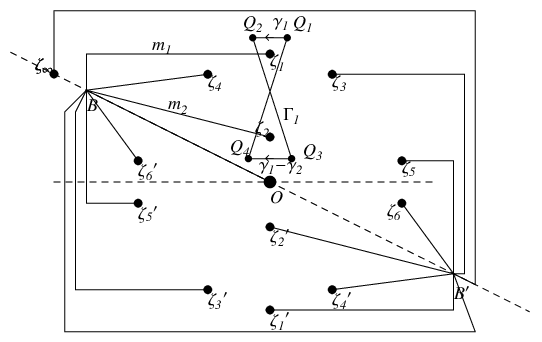}
\caption{Detail of $2$-cycle $\Gamma_1$  on $\hat{K}_0$}
\end{figure}

By the same way, according to the indication in Figure 6, we can obtain  $2$-cycles
$\Gamma_2, \Gamma_3$ and $\Gamma_4$.

\begin{figure}[h]
\center
\includegraphics[scale=0.92]{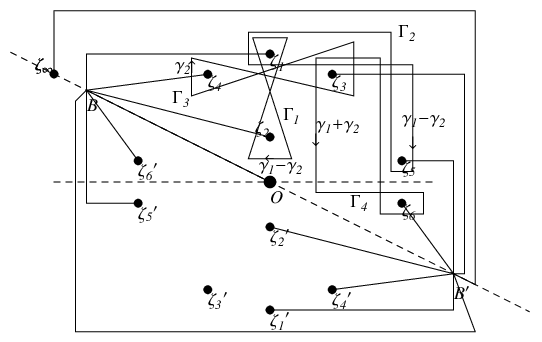}
\caption{Cycles $\Gamma_1,\ldots, \Gamma_4$ on $\hat{K}_0$}
\end{figure}

Next, we construct a $2$-cycle $\Gamma_5$.
We take the points $P_1$ and $P_2$ illustrated in Figure 7.
Take a small circle $c_1$ ($c_2$, resp.),
 which starts from $P_1$ and goes around $\zeta_3$ ($\zeta_5$, resp.) in the positive (negative, resp.) direction.
Let 
$\beta_5$ be an oriented arc from $P_1$ to $P_2$ crossing the cut line $m_0$.
So, we have the $2$-chains
$U(c_1,\gamma_2), U(\beta_5,\gamma_1)$ and $U(c_2, \gamma_1)$ on $\hat{K}_0$.
Here, 
 we use the circuit matrices in Table 3.
For example,
 $\gamma_2$ on $P_1$ is transformed to 
$\gamma_1+\gamma_2$ after crossing the cut line $m_3$.
Thus, we obtain a $2$-cycle
$$
\Gamma_5=U(c_1,\gamma_2)\cup U(\beta_5,\gamma_1)\cup U(c_2, \gamma_1),
$$
that is illustrated by Figure 7.
\begin{figure}[h]
\center
\includegraphics[scale=0.92]{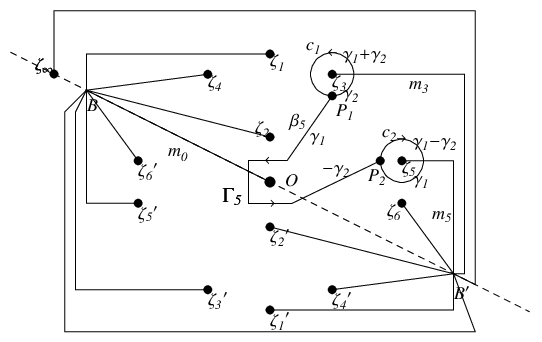}
\caption{Cycle $\Gamma_5$ on $\hat{K}_0$}
\end{figure}

Also, let us construct a $2$-cycle $\Gamma_6$.
Take the points $P_2,P_3,P_4$ and $P_5$ indicated in Figure 8.
Let $c_2$ be as above.
Let $c_3$ ($c_5$, resp.) be a small circle which starts from $P_3$ ($P_5$, resp.) and goes around $\zeta_6$ ($\zeta_6'$, resp.) in the negative (positive, resp.) direction.
For $(l,m)\in \{(4,2),(4,3),(5,4)\}$,
let $\beta_{lm}$ be an arc from $P_l$ to $P_m$.
We have the  $2$-chains 
$U(c_2,\gamma_1-\gamma_2), U(c_3,\gamma_1+\gamma_2), U(c_5,\gamma_2'), 
U(\beta_{42},-\gamma_2), U(\beta_{43},\gamma_1)$ and  $U(\beta_{54}, \gamma_1')$.
Here, according to (\ref{eq: gammadashtransform}),
 we have $\gamma_2'= \gamma_1$ and $\gamma_1'= \gamma_1-\gamma_2$.
As in the case of $\Gamma_5$, 
we obtain a $2$-cycle  
$$
\Gamma_6= U(c_2,\gamma_1-\gamma_2)\cup  U(c_3,\gamma_1+\gamma_2)\cup  U(c_5,\gamma_2')\cup   
U(\beta_{42},-\gamma_2)\cup  U(\beta_{43},\gamma_1)\cup  U(\beta_{54}, \gamma_1'),
$$
which is illustrated  in Figure 8.
\begin{figure}[h]
\center
\includegraphics[scale=0.92]{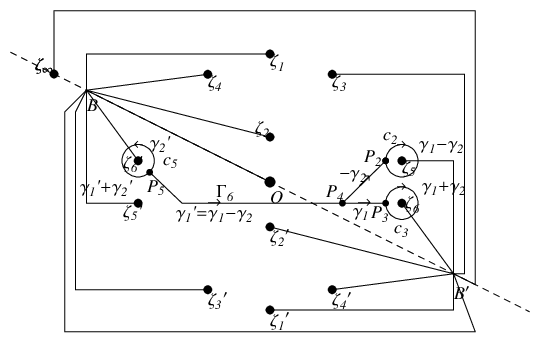}
\caption{Cycle $\Gamma_6$ on $\hat{K}_0$}
\end{figure}

\subsection{Intersection numbers}
We put
$
\Gamma_i'= \jmath (\Gamma_i) \ (i\in \{ 1,2,3,4,5,6\}).
$
Set 
\begin{eqnarray} \label{GammaGammadashsystem}
L_{GG'}=\langle \Gamma_1,\cdots , \Gamma_6, \Gamma_1', \cdots ,\Gamma_6'\rangle.
\end{eqnarray}
\begin{prop}
The rank of  $ L_{GG'}$  is equal to $12$.  
It is orthogonal to
the system $L_B$ in Remark \ref{remsingulartypesandLB}.
Hence, it holds
$
\langle L_{GG'}, L_B\rangle \otimes \mathbb{Q} = H_2(\hat{K}_0,\mathbb{Q}).
$
\end{prop}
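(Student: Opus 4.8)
The plan is to prove the three assertions in sequence: linear independence of the twelve cycles (rank $12$), orthogonality to $L_B$, and then the spanning statement, the last being a dimension count once the first two are established.

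For the rank, I would compute the full Gram matrix of the twelve generators $\Gamma_1,\dots,\Gamma_6,\Gamma_1',\dots,\Gamma_6'$ of $L_{GG'}$ with respect to the intersection form on $H_2(\hat{K}_0,\mathbb{Z})$. Each $\Gamma_i$ is a tube (torus) cycle $U(\rho,\gamma)$ built from an oriented circuit $\rho$ on the $z$-sphere together with a $1$-cycle $\gamma\in\langle\gamma_1,\gamma_2\rangle$ carried along $\rho$, and its homology class is pinned down by the circuit and by the local monodromy data recorded in Tables 2, 3 and 4. The intersection number of two such tubes is then computed by the standard Picard--Lefschetz recipe: it is a finite sum of contributions, one for each transversal crossing of the base circuits, each contribution being $\pm(\gamma\cdot\gamma')$ evaluated in the fibre over that crossing point, with the sign fixed by the orientations chosen in Figures 5--8 and the normalisation $(\gamma_1\cdot\gamma_2)=1$ of Figure 1. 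Carrying this out for all pairs --- including those involving $\Gamma_i'=\jmath(\Gamma_i)$, where the change of basis $(\ref{eq: gammadashtransform})$ between the systems $\{\gamma_1,\gamma_2\}$ and $\{\gamma_1',\gamma_2'\}$ over $\pi_z^{-1}(\zeta_b')$ must be inserted --- yields an explicit $12\times 12$ integral symmetric matrix, and I expect to verify by direct computation that its determinant is nonzero. This gives both $\operatorname{rank}L_{GG'}=12$ and the nondegeneracy of $L_{GG'}\otimes\mathbb{Q}$ as a subspace of $H_2(\hat{K}_0,\mathbb{Q})$. This intersection computation, with its sign bookkeeping across all the figures, is the main obstacle.

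The orthogonality $L_{GG'}\perp L_B$ is geometric. By Remark \ref{remsingulartypesandLB}, $L_B$ is generated by the general fibre class $F$, the zero section $O$, and the non-identity components of the singular fibres $\pi_z^{-1}(0)$ (type $IV$) and $\pi_z^{-1}(\infty)$ (type $IV^*$). Every $\Gamma_i$ and $\Gamma_i'$ is supported over a one-dimensional subset of the $z$-sphere that avoids $z=0$ and $z=\infty$ (the circuits and cut-line crossings in Figures 5--8 remain in a neighbourhood of the finite critical values $\zeta_j,\zeta_j'$), so these cycles are disjoint from every fibre component generating $L_B$; hence their intersection numbers with those classes vanish. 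Likewise $\gamma_1,\gamma_2$, together with all their monodromy transforms and their images under $\jmath:(x_1,y_1,z)\mapsto(x_1,y_1,-z)$, are represented by loops in the finite part of the $y_1$-line and so avoid the point at infinity of each Weierstrass fibre; therefore $\Gamma_i\cdot O=\Gamma_i'\cdot O=0$. Finally $\Gamma_i\cdot F=\Gamma_i'\cdot F=0$ because the image of such a tube in the base is one-dimensional, so a generic fibre misses it. Thus $L_{GG'}$ is contained in the orthogonal complement of $L_B$ in $H_2(\hat{K}_0,\mathbb{Z})$.

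It remains to obtain the spanning statement. By construction $L_B$ is the trivial lattice of the elliptic surface $(\hat{K}_0,\pi_z,\mathbb{P}^1)$, namely $\langle O,F\rangle\oplus A_2(-1)\oplus E_6(-1)$, nondegenerate of rank $10$. Since $L_{GG'}\otimes\mathbb{Q}$ is nondegenerate, the intersection form gives the orthogonal decomposition $H_2(\hat{K}_0,\mathbb{Q})=(L_{GG'}\otimes\mathbb{Q})\oplus(L_{GG'}\otimes\mathbb{Q})^{\perp}$, and by the previous paragraph $L_B\otimes\mathbb{Q}\subseteq(L_{GG'}\otimes\mathbb{Q})^{\perp}$, so $(L_{GG'}\otimes\mathbb{Q})\cap(L_B\otimes\mathbb{Q})=0$. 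As $\dim_{\mathbb{Q}}(L_{GG'}\otimes\mathbb{Q})=12$ and $\dim_{\mathbb{Q}}(L_B\otimes\mathbb{Q})=10$ sum to $b_2(\hat{K}_0)=22$, we conclude $\langle L_{GG'},L_B\rangle\otimes\mathbb{Q}=H_2(\hat{K}_0,\mathbb{Q})$, as asserted.
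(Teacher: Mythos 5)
Your proposal is correct and follows essentially the same route as the paper: the paper likewise establishes independence by computing the $12\times 12$ intersection matrix $M_{GG'}$ (its Proposition on $M_{GG'}$, obtained by exactly the fibrewise Picard--Lefschetz counting you describe) and checking it is nonsingular, notes that orthogonality to $L_B$ holds by construction since the tube cycles avoid the fibres over $0$ and $\infty$, the section and a generic fibre, and concludes by the dimension count $12+10=22$. Your write-up merely makes explicit the steps (nondegeneracy implying trivial intersection with $L_B\otimes\mathbb{Q}$) that the paper leaves implicit.
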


\begin{proof}
We have ${\rm rank} (H_2(\hat{K}_0,\mathbb{Q}))=22$ and ${\rm dim}_\mathbb{Q} (L_B\otimes \mathbb{Q})=10$.
By the construction, any member of $L_{GG'}$ is orthogonal to $L_B$.
So, it is enough to check that  $\Gamma_1,\cdots, \Gamma_6, \Gamma_1', \cdots , \Gamma_6'$ 
are independent. 
Since we have   Proposition \ref{GG'intersectionmatrix} below, 
it follows that the intersection matrix to be nonsingular.
Hence we have the assertion.
\end{proof}

\begin{prop} \label{GG'intersectionmatrix}
The intersection matrix $M_{GG'}$ of the system (\ref{GammaGammadashsystem}) is given by 
$
M_{GG'} = \begin{pmatrix} C_G& C_{GG'}\cr {}^tC_{GG'}& C_G
\end{pmatrix},
$
where
\begin{align*}
&  C_G=((\Gamma_i\cdot \Gamma_j))_{1\leq i,j \leq 6 }=((\Gamma_i'\cdot \Gamma_j'))_{1\leq i,j \leq 6 }
=
{\small
\begin{pmatrix}
-2&-1&1&0&0&0\cr
-1&-2&1&1&0&-1\cr
1&1&-2&-1&-1&0\cr
0&1&-1&-2&0&0\cr
0&0&-1&0&-2&-1\cr
0&-1&0&0 &-1&-2
\end{pmatrix}},\\
& C_{GG'} =((\Gamma_i\cdot \Gamma_j'))_{ 1\leq i,j \leq 6 }
=
{\small
\begin{pmatrix}
0&0&0&0&0&0\cr
0&0&0&0&0&0\cr
0&0&0&0&0&0\cr
0&0&0&0&0&1\cr
0&0&0&0&-2&-1\cr
0&0&0&1&-1&0
\end{pmatrix}}.
\end{align*}
Especially, $M_{GG'}$ is nonsingular.
\end{prop}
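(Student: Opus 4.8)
The plan is to evaluate all entries of $M_{GG'}$ by the tube--cycle intersection calculus on the elliptic surface $(\hat{K}_0,\pi_z,\mathbb{P}^1)$, and then to read off the block form from the symmetry under $\jmath$. First I would record the local intersection rule for the $2$-cycles in play. Each of $\Gamma_1,\dots,\Gamma_4$ has the shape $U(\rho_i,\gamma)$: a $1$-cycle $\gamma$ on a fibre, continued along a closed circuit $\rho_i$ in the $z$-sphere so that it returns to itself (via Tables 3 and 4); and $\Gamma_5,\Gamma_6$ are finite unions of such tubes over small circles $c_j$ around branch points --- over which the transported cycle degenerates to the vanishing cycle --- glued along tubes over the connecting arcs $\beta$. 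For two such cycles whose base projections cross transversally at points $p$ lying off the discriminant, the intersection number localises as
\[
 (\Gamma\cdot\Gamma'') \;=\; \sum_{p}\varepsilon_p\,\bigl(\gamma(p)\cdot\gamma''(p)\bigr),
\]
where $\varepsilon_p=\pm1$ is the local orientation sign of the two base arcs at $p$, and $\gamma(p),\gamma''(p)$ are the transported $1$-cycles, intersected on the fibre over $p$; at a branch point shared by two of the constituent pieces one replaces this by the local Picard--Lefschetz model, whose contribution is again $\pm$ the fibre intersection number against the relevant vanishing cycle. The only fibre data needed are $(\gamma_1\cdot\gamma_2)=1$ and $(\gamma_i\cdot\gamma_i)=0$, extended bilinearly to deal with $\gamma_1\pm\gamma_2$ and with $\gamma_1'=\gamma_1-\gamma_2,\ \gamma_2'=\gamma_1$ from (\ref{eq: gammadashtransform}).

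Then I would carry out the three blocks in turn. For $C_G$: for each pair $(i,j)$ I would locate the finitely many crossings of the base projections of $\Gamma_i$ and $\Gamma_j$ in Figures 5--8, determine from the cut lines $m_1,\dots,m_6,m_0,m_\infty$ traversed so far which $1$-cycle sits over each crossing (applying the circuit matrices of Tables 3 and 4 in the appropriate direction), read $\varepsilon_p$ off the figure, and sum; the diagonal entries $-2$ come from the standard push-off/local-model self-intersection computation for tube cycles of this type. Since $\jmath\colon z\mapsto -z$ is a holomorphic --- hence orientation-preserving --- involution of the $4$-manifold $\hat{K}_0$, it preserves intersection numbers, so $((\Gamma_i'\cdot\Gamma_j'))=((\Gamma_i\cdot\Gamma_j))=C_G$ with no extra computation, which yields both diagonal blocks. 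For the off-diagonal block $C_{GG'}$: the base projection of $\Gamma_j'=\jmath(\Gamma_j)$ is the reflection through $z=0$ of the base projection of $\Gamma_j$, and inspection of Figures 5--8 shows that the base projections of $\Gamma_1,\Gamma_2,\Gamma_3$ lie in a region of the $z$-sphere disjoint from every reflected projection, so the first three rows and columns of $C_{GG'}$ vanish; only $\Gamma_4,\Gamma_5,\Gamma_6$, which run near $z=0$ and cross $m_0$, can meet reflected curves, and the surviving entries are computed by the same local rule. Finally, nonsingularity of $M_{GG'}$ follows by evaluating $\det M_{GG'}$ directly from the displayed $C_G$ and $C_{GG'}$.

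The main obstacle will be the sign and monodromy bookkeeping inside the $C_G$ and $C_{GG'}$ blocks: at each crossing of two base projections one must pin down exactly which representative --- $\gamma_1$, $\gamma_2$, $\gamma_1\pm\gamma_2$, $\gamma_1+\gamma_2$, and so on --- has been produced after the transported cycle has jumped across several cut lines, and one must fix the orientations of all the arcs, the small circles $c_j$ and of the $\Gamma_i$ themselves consistently, so that the local signs $\varepsilon_p$ together with the Picard--Lefschetz contributions at the shared branch points add up to exactly the even integers in the statement. The pieces of $\Gamma_5$ and $\Gamma_6$ over the small circles $c_1,c_2,c_3,c_5$ are the delicate part, because there the transversal-crossing formula must be replaced by the local degeneration model and a single branch point usually feeds into several entries at once; getting these contributions right --- in particular the nonzero entries of $C_{GG'}$ and the entries involving $\Gamma_5$ and $\Gamma_6$ in $C_G$ --- is where the real care is required.
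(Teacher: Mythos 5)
Your proposal is correct and follows essentially the same route as the paper: the paper's proof likewise localises each intersection number at the crossings of the base projections, computes $\varepsilon_p\,(\gamma(p)\cdot\gamma''(p))$ there using the monodromy tables (it works out only $(\Gamma_1\cdot\Gamma_2)=-1$ explicitly and leaves the rest to the same procedure), and the nonsingularity is read off the displayed matrices. Your additional remark that $\jmath$ is orientation-preserving, hence $((\Gamma_i'\cdot\Gamma_j'))=C_G$ without further computation, is a clean justification of a point the paper's proof leaves implicit.
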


\begin{proof}
Let us calculate the intersection number $(\Gamma_1\cdot \Gamma_2)$.
We have two geometric intersections $R_1$ and $R_2$ on the $z$-plane as described in Figure 9.
We observe both local intersections $(\Gamma_1\cdot \Gamma_2)_{R_1}$ and $(\Gamma_1\cdot \Gamma_2)_{R_2}$.
At the point $R_1$, 
$\Gamma_1\cap \pi_z^{-1} (R_1)$ is the $1$-cycle $\gamma_1-\gamma_2$ 
and $\Gamma_2 \cap \pi_z^{-1}(R_1)$ is the same 
$1$-cycle $\gamma_1-\gamma_2$. 
So, it holds $(\Gamma_1\cdot \Gamma_2)_{R_1} =0$.
At the point $R_2$, $\Gamma_1 \cap \pi_z^{-1}(R_2)$ is the $1$-cycle $\gamma_1$ and $\Gamma_2\cap \pi_z^{-1}(R_2)$ is the
$1$-cycle $\gamma_1-\gamma_2$.
Hence,
we have
\[
(\Gamma_1\cdot \Gamma_2)_{R_2} 
= (-1) \cdot (\gamma_1 \cdot (\gamma_1-\gamma_2)) \cdot (\rho_1\cdot \rho_2)_{R_2} =-1.
\]
So $(\Gamma_1 \cdot \Gamma_2)=-1$ holds.
We can similarly calculate other intersection numbers. 
\end{proof}

\begin{figure}[h]
\center
\includegraphics[scale=1]{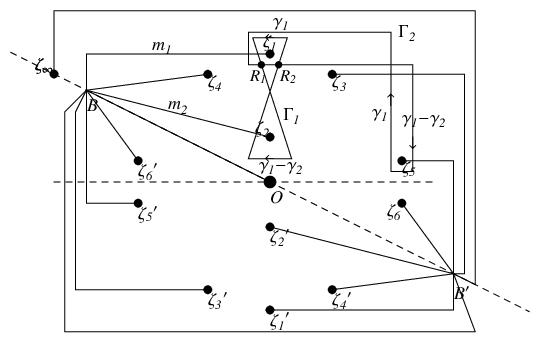}
\caption{Calculation of the intersection number $(\Gamma_1\cdot \Gamma_2)$}
\end{figure}

By (\ref{eq: holoformS0}), we have
$$
\int_{\Gamma_i'} \omega = -\int_{\Gamma_i} \omega  \quad \quad  (i=1,\ldots ,6).
$$
It means that  every $\Gamma_i+\Gamma_i'\ (i=1,\ldots ,6)$ is an algebraic cycle.

\begin{prop}
Every $\Gamma_i-\Gamma_i' \ (i=1,\ldots ,6)$ is an element of the orthogonal complement of the 
N\'eron-Severi lattice ${\rm{NS}} (\hat{K}_0)$.
\end{prop}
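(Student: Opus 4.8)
The plan is to exploit the eigenspace decomposition of $H_2(\hat K_0,\mathbb{C})$ under the involution $\jmath$. First I would recall that $\jmath^*\omega=-\omega$ (equation~(\ref{eq: holoformS0})), so the transcendental part of $H_2(\hat K_0,\mathbb{C})$ — which is spanned over $\mathbb{C}$ by $\omega$ and $\bar\omega$ together with the rest of ${\rm Tr}(\hat K_0)\otimes\mathbb{C}$ — must be analyzed with respect to the sign of $\jmath^*$. The key observation is: a class $x\in H_2(\hat K_0,\mathbb{Z})$ is algebraic (i.e.\ lies in ${\rm NS}(\hat K_0) = {\rm Tr}(\hat K_0)^\perp$) if and only if $\int_x\omega=0$. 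Since $\jmath^*\omega=-\omega$, for $x=\Gamma_i-\Gamma_i'$ we compute $\int_{\Gamma_i-\Gamma_i'}\omega = \int_{\Gamma_i}\omega-\int_{\Gamma_i'}\omega = \int_{\Gamma_i}\omega - \int_{\jmath(\Gamma_i)}\omega = \int_{\Gamma_i}\omega - \int_{\Gamma_i}\jmath^*\omega = \int_{\Gamma_i}\omega + \int_{\Gamma_i}\omega = 2\int_{\Gamma_i}\omega$, which is generically nonzero; so $\Gamma_i-\Gamma_i'$ is \emph{not} algebraic. This alone is not the claim; the claim is the stronger statement that $\Gamma_i-\Gamma_i'$ is orthogonal to \emph{all} of ${\rm NS}(\hat K_0)$.

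To get orthogonality to the full N\'eron–Severi lattice, I would argue as follows. The involution $\jmath$ acts on $H_2(\hat K_0,\mathbb{Z})$ preserving the intersection pairing, so $H_2(\hat K_0,\mathbb{Q})$ splits as the direct sum of the $(+1)$-eigenspace $H^+$ and the $(-1)$-eigenspace $H^-$, and these two eigenspaces are mutually orthogonal with respect to the cup product (for any $u\in H^+$, $v\in H^-$ one has $(u\cdot v) = (\jmath^*u\cdot\jmath^*v) = (u\cdot(-v)) = -(u\cdot v)$, hence $(u\cdot v)=0$). Now $\Gamma_i-\Gamma_i'$ clearly lies in $H^-$. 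So it suffices to show ${\rm NS}(\hat K_0)\otimes\mathbb{Q}\subseteq H^+$, i.e.\ that $\jmath$ acts as $+1$ on the whole N\'eron–Severi lattice up to torsion. For this I would invoke the description of ${\rm NS}(\hat K_0)$: by Remark~\ref{remsingulartypesandLB} and the surrounding construction, ${\rm NS}(\hat K_0)\otimes\mathbb{Q}$ is spanned by $L_B$ (the classes of fiber components over $z=0$ and $z=\infty$, the general fiber and the zero section) together with the algebraic classes $\Gamma_i+\Gamma_i'$. The classes $\Gamma_i+\Gamma_i'$ are $\jmath$-invariant by construction. The lattice $L_B$ is generated by components of the fibers over the two fixed points $z=0,\infty$ of $\jmath$ together with the general fiber class and the section: the section is $\jmath$-invariant, the general fiber class is $\jmath$-invariant, and $\jmath$ maps each fiber over a fixed point of the base to itself — so it permutes the components of $\pi_z^{-1}(0)$ (type $IV$) and of $\pi_z^{-1}(\infty)$ (type $IV^*$) among themselves; a short local analysis of how $z\mapsto -z$ acts near these fibers shows it fixes each component class, so $\jmath$ acts trivially on $L_B$. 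Putting these together gives ${\rm NS}(\hat K_0)\otimes\mathbb{Q}\subseteq H^+$, hence every $\Gamma_i-\Gamma_i'\in H^-$ is orthogonal to ${\rm NS}(\hat K_0)$.

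Alternatively — and this may be cleaner to write — I would avoid the explicit $L_B$ analysis by a Hodge-theoretic argument: ${\rm NS}(\hat K_0) = H^{1,1}(\hat K_0)\cap H_2(\hat K_0,\mathbb{Z})$, and since $\jmath$ is an automorphism of $\hat K_0$ with $\jmath^*\omega=-\omega$, the $(-1)$-eigenspace $H^-\otimes\mathbb{C}$ contains both $\omega$ and $\bar\omega$ hence a positive part of the Hodge structure; more to the point, I would simply note that ${\rm NS}(\hat K_0)\otimes\mathbb{Q}$ has a basis of algebraic classes, each of which is a $\jmath$-invariant divisor (fiber components, section, and the $\Gamma_i+\Gamma_i'$), placing it in $H^+$, and then orthogonality of $H^+$ and $H^-$ finishes it.

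The main obstacle I anticipate is the claim that ${\rm NS}(\hat K_0)\otimes\mathbb{Q}$ is really spanned by $\jmath$-invariant classes — in particular, verifying that $\jmath$ fixes every component of the singular fibers over $z=0$ and $z=\infty$ (rather than swapping some of them), and confirming that $L_B$ together with the $\Gamma_i+\Gamma_i'$ indeed exhausts ${\rm NS}(\hat K_0)$ rationally. The first point is a concrete but slightly delicate local computation with the Weierstrass model near the type $IV$ and $IV^*$ fibers; the second follows from the rank count ($10 + 6 = 16 = \rho(\hat K_0)$) established by Proposition~\ref{PropKPic} and the preceding rank-$12$ statement for $L_{GG'}$, once one knows the six algebraic combinations $\Gamma_i+\Gamma_i'$ are independent modulo $L_B$.
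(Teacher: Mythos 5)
Your reduction to showing ${\rm NS}(\hat K_0)\otimes\mathbb{Q}=L_B\otimes\mathbb{Q}\oplus\langle\Gamma_i+\Gamma_i'\rangle\otimes\mathbb{Q}$ via the rank count $10+6=16$ is fine, and your eigenspace argument cleanly handles the summand $\langle\Gamma_j+\Gamma_j'\rangle$: since $\jmath_*$ preserves the intersection form, $\Gamma_i-\Gamma_i'\in H^-$ is automatically orthogonal to $\Gamma_j+\Gamma_j'\in H^+$ (the paper instead reads this off from the computed matrix $M_{GG'}$). The gap is in the step you yourself flagged as the main obstacle: the claim that $\jmath$ fixes every component class of the fibers over $z=0$ and $z=\infty$, hence that $L_B\subseteq H^+$ and ${\rm NS}(\hat K_0)\otimes\mathbb{Q}\subseteq H^+$. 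That local computation comes out the \emph{opposite} way. Near the singular point $(x_1,y_1,z)=(0,0,0)$ of the Weierstrass model of $\hat K_0$ the equation is $x_1^2+\tilde z^2=y_1^3$ with $\tilde z=z\cdot(\text{unit in }y_1,z^2)$, an $A_2$ point; writing $u=x_1+i\tilde z$, $v=x_1-i\tilde z$ gives $uv=y_1^3$, and $\jmath:\tilde z\mapsto-\tilde z$ becomes $u\leftrightarrow v$, which interchanges the two exceptional $(-2)$-curves of the resolution, i.e.\ the two non-identity components of the type $IV$ fiber. (This is the expected behaviour: the type $IV$ fiber arises from the type $II$ fiber of $\Sigma_0$ by the ramified degree-$2$ base change, and the deck transformation permutes the new components; the type $IV^*$ fiber over $z=\infty$ is treated similarly.) So $L_B$ contains nonzero classes in $H^-$, ${\rm NS}(\hat K_0)\otimes\mathbb{Q}\not\subseteq H^+$, and the eigenspace decomposition cannot by itself give orthogonality of $\Gamma_i-\Gamma_i'$ to $L_B$. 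Your "alternative" formulation relies on the same false invariance and fails at the same point.

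The proposition is nonetheless true, and the repair is the direct argument the paper uses for this part: each $\Gamma_i$ (hence each $\Gamma_i'$) is built from $1$-cycles in fibers over arcs and loops in the $z$-plane that avoid $z=0$ and $z=\infty$, and those $1$-cycles avoid the section, so $\Gamma_i-\Gamma_i'$ has zero geometric intersection with each individual generator of $L_B$ (fiber components over $0$ and $\infty$, the general fiber, the section). Combined with your eigenspace argument for the classes $\Gamma_j+\Gamma_j'$ and the rank count, this closes the proof. In short: keep the decomposition ${\rm NS}=L_B+\langle\Gamma_j+\Gamma_j'\rangle$ and the $H^{\pm}$ trick for the second summand, but replace "$L_B\subseteq H^+$" by the construction-level disjointness of the cycles from the divisors generating $L_B$.
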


\begin{proof}
By  Proposition \ref{PropKPic},
the rank of ${\rm NS}(\hat{K}_0)$ is $16$.
By the construction, it is apparent   that $\Gamma_i-\Gamma_i' $ is orthogonal to the lattice $L_B$. 
Also, we obtain $((\Gamma_i-\Gamma_i' )\cdot (\Gamma_j+\Gamma_j' ))=0$ from Proposition
\ref{GG'intersectionmatrix}.
Hence,
the assertion is proved.
\end{proof}

\begin{figure}[h]
\center
\includegraphics[scale=1]{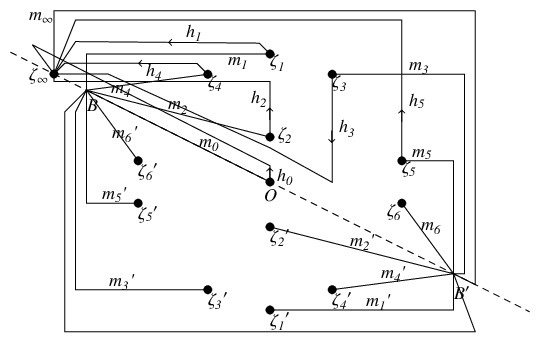}
\caption{ $h$-arcs for dual $2$-cycles}
\end{figure}

We make oriented arcs $h_0,h_1,\ldots, h_5$ on the $z$-plane  given by Figure 10.
 Each arc starts from $\zeta_i$ with 
the terminal $\zeta_{\infty}$. 
Over these arcs, we make  
$2$-cycles:
\begin{align} \label{eq: dualcycles}
\begin{cases}
& C_1=U(h_1, \gamma_2),\quad  C_2=U(h_2, -\gamma_2), \quad  C_3=U(h_3, -\gamma_1), \\
&C_4=U(h_4, \gamma_1), \quad
C_5=U(h_5, -\gamma_2), \quad C_6=U(h_0, -\gamma_2).
\end{cases}
\end{align}
For $C_i \ (i=1,\ldots ,6)$, each $1$-cycle on the fiber on $h_i$  is a vanishing cycle at the starting point $\zeta_i\ 
(i=0,1,\ldots ,5)$. So every $C_i$ determines a $2$-cycle on $\hat{K}_0$.
By a direct calculation as in the proof of Proposition \ref{GG'intersectionmatrix},
we obtain the following result.

\begin{prop}
The intersection matrix $M_{CG}=((C_i\cdot (\Gamma_j-\Gamma_j')))_{i,j \in\{ 1,\ldots ,6\} }$ is given by
\begin{align*}
M_{CG} = 
{\small
\begin{pmatrix}
1&1&0&0&0&0\cr
1&0&0&0&0&0\cr
0&0&1&1&1&0\cr
0&0&1&0&0&0\cr
0&1&0&0&1&1\cr
0&0&0&0&0&1
\end{pmatrix}}.
\end{align*}
It holds ${\rm det} (M_{CG})=1$.
\end{prop}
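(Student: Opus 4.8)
The plan is to evaluate $M_{CG}$ entrywise, computing $(C_i\cdot(\Gamma_j-\Gamma_j'))=(C_i\cdot\Gamma_j)-(C_i\cdot\Gamma_j')$ by the same local intersection analysis used in the proof of Proposition~\ref{GG'intersectionmatrix}. Recall that each $C_i$ of (\ref{eq: dualcycles}) is a thimble lying over the arc $h_i$ joining $\zeta_i$ to $\zeta_\infty$, that $\Gamma_j$ lies over a closed circuit in the upper part of the $z$-plane, and that $\Gamma_j'=\jmath(\Gamma_j)$ lies over its mirror image in the lower part. First I would determine, for each of the $36$ pairs, the finitely many points $R$ of the $z$-plane where the base arc of $C_i$ meets the base circuit of $\Gamma_j$ (respectively of $\Gamma_j'$); since $h_i$ is chosen disjoint from the cut lines $m_k, m_k'$, most of these base sets are disjoint, and this is exactly what forces the sparse shape of $M_{CG}$.

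At each crossing point $R$ I would read off the two $1$-cycles in the fibre $\pi_z^{-1}(R)$ carried there: the cycle $\gamma_{C_i}(R)$ obtained by continuing, along $h_i$ from $\zeta_i$, the vanishing cycle listed in the ``V.\ cycle'' row of the circuit tables, and the cycle $\gamma_{\Gamma_j}(R)$ obtained by continuing $\gamma_1$ (or whichever $1$-cycle $\Gamma_j$ starts from) along the relevant portion of the circuit by means of the circuit matrices $M_{zi}$, $M_{zi}'$ of Tables~3 and~4. The local intersection number is then $(C_i\cdot\Gamma_j)_R=\varepsilon_R\,(\gamma_{C_i}(R)\cdot\gamma_{\Gamma_j}(R))$, where $\varepsilon_R=\pm1$ is the intersection sign of the two base arcs at $R$ fixed by the orientations in the figures, and the fibre intersection number is computed from $(\gamma_1\cdot\gamma_2)=1$. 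Summing over $R$ gives $(C_i\cdot\Gamma_j)$, and the parallel count over the reflected circuits gives $(C_i\cdot\Gamma_j')$. For example $(C_1\cdot\Gamma_1)$ comes from the single crossing of $h_1$ with the base of $\Gamma_1$, where $C_1$ carries $\gamma_2$ and $\Gamma_1$ carries $\gamma_1$, so the contribution is $\pm(\gamma_2\cdot\gamma_1)$, hence $\pm1$; matching orientations yields $+1$, while $h_1$ misses the base of $\Gamma_1'$, so $(C_1\cdot\Gamma_1')=0$ and the $(1,1)$-entry is $1$. Running through all pairs in this manner produces exactly the matrix $M_{CG}$ displayed in the statement.

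Finally, to obtain $\det(M_{CG})=1$ it suffices to expand along the sparse rows: rows $2$, $4$ and $6$ each contain a single nonzero entry equal to $1$, so iterated cofactor expansion reduces the computation to a $3\times3$ determinant equal to $-1$, and collecting the accumulated signs gives $\det(M_{CG})=1$; nonsingularity is then immediate.

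The main obstacle will be the bookkeeping in the middle step: correctly tracking the branch and orientation of each $1$-cycle after it crosses the several cut lines, and fixing each sign $\varepsilon_R$ from the figures, so that in the pairs with more than one geometric crossing — such as those involving $\Gamma_3$, $\Gamma_5$ or $\Gamma_6$ — the local contributions add up to the asserted integer rather than to something off by a sign. Once the conventions of Tables~2--4 and Figures~5--10 are used consistently, this is a finite and essentially routine verification, entirely analogous to the determination of $M_{GG'}$ in Proposition~\ref{GG'intersectionmatrix}.
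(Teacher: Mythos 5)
Your proposal is correct and follows essentially the same route as the paper: the authors obtain $M_{CG}$ by exactly the kind of direct entrywise calculation you describe, locating the base-curve crossings of each $h_i$ with the circuits underlying $\Gamma_j$ and $\Gamma_j'$ and evaluating local intersection numbers via the fibre cycles and the circuit matrices, just as in the proof of Proposition~\ref{GG'intersectionmatrix}. The concluding determinant computation is a routine cofactor expansion and your value $\det(M_{CG})=1$ checks out.
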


By this proposition, it is guaranteed that the system $\{ \Gamma_1-\Gamma_1', \ldots , \Gamma_6-\Gamma_6'\}$ becomes to be 
a  $\mathbb{Z}$-basis of  $H_2(\hat{K}_0,\mathbb{Z})/{\rm NS}(\hat{K}_0)$.
Set
$
G_j=\Gamma_j-\Gamma_j'
$
$(j\in \{1,\ldots,6\})$.
Also, setting
$
{}^t(C_1',\cdots ,C_6') = M_{CG}^{-1} {}^t (C_1,\cdots ,C_6),
$
it holds
\begin{eqnarray*}
(C_i'\cdot G_j)=\delta_{ij} \quad\quad (i,j\in \{ 1,\ldots ,6\}).
\end{eqnarray*}
We set $M_G =(G_i\cdot G_j)_{1 \leq i,j \leq 6} .$
By a direct calculation, we have the following proposition.

\begin{prop}
\begin{eqnarray*}
 M_G  =
2
{\small\begin{pmatrix}
-2& -1& 1&0& 0& 0\cr
-1&-2& 1& 1& 0& -1\cr
1& 1& -2& -1& -1& 0\cr
0& 1& -1& -2& 0& -1\cr 
0& 0& -1& 0& 0& 0\cr 
0&-1& 0& -1& 0& -2
\end{pmatrix}}.
\end{eqnarray*}
\end{prop}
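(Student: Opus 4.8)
The plan is to obtain $M_G$ by a purely formal manipulation of the data already assembled in Proposition \ref{GG'intersectionmatrix}, with no new geometric intersection computation. The one structural input needed is that $\jmath$, being a holomorphic involution of $\hat{K}_0$, acts on $H_2(\hat{K}_0,\mathbb{Z})$ as an isometry for the intersection pairing. Writing $\Gamma_i' = \jmath(\Gamma_i)$ and using $\jmath^2 = \mathrm{id}$, this yields the two identities
\[
(\Gamma_i'\cdot \Gamma_j') = (\jmath\Gamma_i\cdot \jmath\Gamma_j) = (\Gamma_i\cdot\Gamma_j), \qquad
(\Gamma_i'\cdot \Gamma_j) = (\jmath\Gamma_i\cdot \jmath^2\Gamma_j) = (\Gamma_i\cdot\Gamma_j').
\]
The first identity is exactly the assertion, already recorded in Proposition \ref{GG'intersectionmatrix}, that the two diagonal blocks of $M_{GG'}$ coincide (both equal $C_G$); the second says in addition that the off-diagonal block $C_{GG'} = ((\Gamma_i\cdot\Gamma_j'))$ is symmetric.

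First I would expand, for $i,j\in\{1,\dots,6\}$,
\[
(G_i\cdot G_j) = \big((\Gamma_i-\Gamma_i')\cdot(\Gamma_j-\Gamma_j')\big)
= (\Gamma_i\cdot\Gamma_j) - (\Gamma_i\cdot\Gamma_j') - (\Gamma_i'\cdot\Gamma_j) + (\Gamma_i'\cdot\Gamma_j').
\]
Substituting the two identities above, the right-hand side collapses to $2(\Gamma_i\cdot\Gamma_j) - 2(\Gamma_i\cdot\Gamma_j')$, so that
\[
M_G = 2\big(C_G - C_{GG'}\big).
\]
Equivalently, $M_G$ is the congruence transform of the block matrix $M_{GG'}$ of Proposition \ref{GG'intersectionmatrix} by the $6\times 12$ matrix realising $\Gamma_i\mapsto\Gamma_i-\Gamma_i'$; that block computation produces $2C_G - C_{GG'} - {}^tC_{GG'}$, which equals $2(C_G-C_{GG'})$ precisely because $C_{GG'}$ is symmetric.

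Finally I would subtract the two explicit $6\times6$ matrices $C_G$ and $C_{GG'}$ listed in Proposition \ref{GG'intersectionmatrix} and double the result; term by term this reproduces the matrix asserted in the proposition (for instance the $(4,6)$-entry becomes $2(0-1)=-2$ and the $(5,5)$-entry becomes $2(-2-(-2))=0$). There is no genuine obstacle here: the only point that deserves a careful sentence is the claim that $\jmath$ preserves intersection numbers, hence that $C_{GG'}$ is symmetric, since this is exactly what makes the two cross terms combine into $-2(\Gamma_i\cdot\Gamma_j')$ rather than leaving an asymmetric contribution; everything else is the bookkeeping already encoded in Proposition \ref{GG'intersectionmatrix}. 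Combined with the earlier relations $(C_i'\cdot G_j)=\delta_{ij}$ and the fact that $\{G_1,\dots,G_6\}$ is a $\mathbb{Z}$-basis of $H_2(\hat{K}_0,\mathbb{Z})/\mathrm{NS}(\hat{K}_0)$, this matrix is then the intersection matrix of the transcendental lattice, to be identified with $A(2)$ in Theorem \ref{ThmTrK(t)}.
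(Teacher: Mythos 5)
Your proposal is correct and matches what the paper's ``direct calculation'' amounts to: expand $(G_i\cdot G_j)$ bilinearly and substitute the entries of $M_{GG'}$ from Proposition \ref{GG'intersectionmatrix}, the only extra observation being that $\jmath$ acts as an isometric involution on $H_2(\hat{K}_0,\mathbb{Z})$ (equivalently, that $C_{GG'}$ is symmetric, which is also visible from the explicit matrix), so that $M_G=2(C_G-C_{GG'})$. The resulting matrix agrees entry by entry with the one asserted.
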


\begin{thm}\label{thmGsystem}
The system $\{ G_1, \cdots , G_6\}$ 
gives a basis of the transcendental lattice ${\rm{Tr}} (\hat{K}_0)$
of the reference surface 
with the intersection matrix $A(2)=U(2)\oplus U(2)\oplus A_2(-2)$.
\end{thm}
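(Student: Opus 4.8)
The plan is to show that the rank-$6$ lattice $\langle G_1,\dots,G_6\rangle$ computed via $M_G$ is exactly $\mathrm{Tr}(\hat K_0)$ and then to identify the abstract isometry type of $M_G$ with $A(2)=U(2)\oplus U(2)\oplus A_2(-2)$. For the first point, recall that $\{G_1,\dots,G_6\}$ was produced precisely as a $\mathbb{Z}$-basis of $H_2(\hat K_0,\mathbb{Z})/\mathrm{NS}(\hat K_0)$ (this follows from $\det(M_{CG})=1$ in the previous proposition, together with the fact that each $\Gamma_i-\Gamma_i'$ lies in $\mathrm{NS}(\hat K_0)^\perp$ by the orthogonality proposition). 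Since $\mathrm{rank}\,\mathrm{NS}(\hat K_0)=16$ by Proposition \ref{PropKPic}, the quotient has rank $6$, which matches $\mathrm{rank}\,\mathrm{Tr}(\hat K_0)=6$; and because $\mathrm{Tr}(\hat K_0)$ is by definition the orthogonal complement of $\mathrm{NS}$ in $H_2(\hat K_0,\mathbb{Z})$, the natural map $\mathrm{Tr}(\hat K_0)\to H_2/\mathrm{NS}$ is injective with finite cokernel. To upgrade this to an isometry of lattices I would argue that the $G_j$ actually lie in $\mathrm{Tr}(\hat K_0)$ — indeed each $G_j=\Gamma_j-\Gamma_j'$ is orthogonal to $\mathrm{NS}(\hat K_0)$, hence lies in $\mathrm{Tr}(\hat K_0)$ — and that they generate it, because the dual cycles $C_1',\dots,C_6'$ with $(C_i'\cdot G_j)=\delta_{ij}$ show that $\langle G_1,\dots,G_6\rangle$ is primitive in $H_2(\hat K_0,\mathbb{Z})$. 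A primitive rank-$6$ sublattice of the rank-$6$ primitive lattice $\mathrm{Tr}(\hat K_0)$ must be all of it.

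Next I would compute the isometry type of $M_G$. The cleanest route is an explicit unimodular change of basis bringing $\tfrac12 M_G$ into block form $U\oplus U\oplus A_2(-1)$. Concretely, I would search for two hyperbolic planes: e.g. among $G_1,\dots,G_6$ look for a pair with Gram matrix $\begin{pmatrix}0&1\\1&0\end{pmatrix}$ after integral combination — the entry $(\tfrac12 M_G)_{35}=-1$ and $(\tfrac12 M_G)_{55}=0$ suggests starting from $G_5$ (which is isotropic) paired with $-G_3$, then Gram–Schmidt-style clearing the remaining rows over $\mathbb{Z}$ to split off the first $U$; repeat to split off a second $U$; the residual rank-$2$ negative-definite even lattice of determinant $3$ is then forced to be $A_2(-1)$. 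Multiplying back by $2$ gives $M_G\cong U(2)\oplus U(2)\oplus A_2(-2)=A(2)$. Alternatively — and as a sanity check I would include this — one verifies the invariants: $\tfrac12 M_G$ has signature $(2,4)$ (it must, since $\mathrm{Tr}(\hat K_0)$ carries the Hodge-theoretic signature $(2,4)$ as for any algebraic $K3$), determinant $-3$, and discriminant form $q_{A_2(-1)}$; then Proposition \ref{PropLatticeUnique} (applicable since $l(\mathscr{A})=1\le 6-2$) pins down $\tfrac12 M_G\cong A$ uniquely, whence $M_G\cong A(2)$.

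The main obstacle I anticipate is the bookkeeping of orientations and branches in confirming that $\{G_1,\dots,G_6\}$ is genuinely a $\mathbb{Z}$-basis of $\mathrm{Tr}(\hat K_0)$ rather than a finite-index sublattice: one needs both $\det(M_{CG})=\pm1$ (giving a basis of $H_2/\mathrm{NS}$) and the primitivity of $\langle G_1,\dots,G_6\rangle$ in $H_2(\hat K_0,\mathbb{Z})$, and these rely on the transversality computations of the preceding propositions being exactly right. Once the basis claim is secured, the identification with $A(2)$ is a finite linear-algebra computation. Finally, the primitivity of $A(2)$ inside $U\oplus U\oplus U\oplus E_8(-2)$ — needed for the second assertion of Theorem \ref{ThmTrK(t)} — follows from Lemma \ref{LemNikulin}(1) applied to the Nikulin involution $\iota$ on $K(t)$ from Theorem \ref{ThmKS}(3), since $\mathrm{Tr}(\hat K_0)\cong\mathrm{Tr}(K(t))$ and any transcendental lattice of a $K3$ admitting a Nikulin involution embeds primitively there.
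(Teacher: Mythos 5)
Your proposal is correct and follows essentially the same route as the paper: the paper establishes that $\{G_1,\dots,G_6\}$ spans $\mathrm{Tr}(\hat K_0)$ via $\det(M_{CG})=1$ and the dual cycles $C_i'$ (your primitivity argument is a precise version of this step), and then exhibits an explicit unimodular matrix $M_J$ with $M_J M_G\,{}^tM_J = U(2)\oplus U(2)\oplus A_2(-2)$, which is exactly the integral change of basis you propose to construct by splitting off hyperbolic planes. The only cosmetic slip is in your sanity check: $\det\bigl(U\oplus U\oplus A_2(-1)\bigr)=+3$, not $-3$, which does not affect the argument.
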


\begin{proof}
Set
$
M_J={\small
\begin{pmatrix}
0&0&0& 0& 1& 0\cr 
0& 0& -1& 0& 1& 0\cr 
-1& 1& 0& 1& -1& -1\cr 
0&1&0& 1& 0& -1\cr
 0&0& 0& 1& -1& 0\cr
  0&0& 0& 0& 0& -1
\end{pmatrix}}.
$
This is a unimodular matrix.
By a direct calculation, it holds
$
M_J M_G {}^tM_J = \Big(U(2)\oplus U(2)\oplus \begin{pmatrix} -4&2\cr 2&-4 \end{pmatrix}\Big).
$
\end{proof}

Theorem \ref{ThmTrK(t)} (1)  immediately follows
from Theorem \ref{thmGsystem} and Lemma \ref{LemNikulin}.

\subsection{N\'eron-Severi lattice for $\mathcal{G}_1$}

In this subsection, we prove Theorem \ref{ThmTrK(t)} (2).

\begin{lem}\label{LemPrimitive}
(1) There is a primitive embedding $i_1: U(2)\hookrightarrow U\oplus U$.

(2)  There is a primitive embedding $i_2: U(2) \oplus A_2(-2) \hookrightarrow U \oplus E_8(-1)$.
\end{lem}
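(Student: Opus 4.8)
The plan is to construct both primitive embeddings explicitly by writing down generators and checking primitivity directly, since all the lattices involved are small and completely concrete.

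For part (1), I would take $U \oplus U$ with hyperbolic bases $\{e_1, f_1\}$ and $\{e_2, f_2\}$, so $(e_i, f_i) = 1$ and all other products vanish. A copy of $U(2)$ needs two generators $u, v$ with $(u,u) = (v,v) = 0$ and $(u,v) = 2$. The natural choice is $u = e_1 + e_2$ and $v = f_1 + f_2$: then $(u,u) = 2(e_1,f_1) \cdot 0 = 0$ wait—$(e_1+e_2, e_1+e_2) = 0$ since each $e_i$ is isotropic and $(e_1,e_2)=0$, similarly $(v,v) = 0$, and $(u,v) = (e_1,f_1) + (e_2,f_2) = 2$. So $\langle u, v\rangle \cong U(2)$. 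To check primitivity, suppose $\frac{1}{n}(au + bv) \in U\oplus U$ for some integer $n > 1$; writing it in the $e_i, f_i$ basis gives $\frac{a}{n}(e_1+e_2) + \frac{b}{n}(f_1+f_2)$, and for this to have integer coordinates we need $n \mid a$ and $n \mid b$, so the sublattice is primitive.

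For part (2), the target is $U \oplus E_8(-1)$, which has rank $10$, and the source $U(2) \oplus A_2(-2)$ has rank $4$. I would handle the $U(2)$ summand and the $A_2(-2)$ summand with some care so that they land in orthogonal pieces and the combined embedding stays primitive. One clean approach: embed $U(2)$ into the $U$ summand exactly as one does for part (1) but using a single hyperbolic plane—take $u = e + f$ (wait, that has square $2$, not $0$)—so instead I would rather embed $U(2) \hookrightarrow U \oplus U$-type sublattices living inside $E_8(-1)$, or alternatively use Proposition \ref{PropLatticeUnique} and discriminant-form bookkeeping: the lattice $U(2) \oplus A_2(-2)$ has discriminant form computable from $q_{U(2)} \oplus q_{A_2(-2)}$, and $U \oplus E_8(-1)$ is unimodular, so by Proposition \ref{PropLatticePerp} a primitive embedding exists if and only if there is an even lattice $M$ of the complementary signature $(3, 3)$ with $q_M \cong -q_{U(2)\oplus A_2(-2)}$. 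Since $l(\mathscr{A}_{U(2)\oplus A_2(-2)})$ is small and the signature is indefinite, the existence of such $M$ — and hence of the embedding — follows from Nikulin's existence criterion. I expect the main obstacle to be the second half of part (2): verifying primitivity of the explicit combined embedding (so that no gluing vector sneaks in), or equivalently checking that the complementary lattice $M$ really exists with the right discriminant form; this is the kind of lattice-arithmetic computation that Proposition \ref{PropLatticeUnique} and Proposition \ref{PropLatticePerp} are designed to streamline, so I would lean on those rather than fight the embedding coordinate-by-coordinate. In either route, once primitivity is confirmed, the lemma is complete and feeds directly into the identification of $\mathrm{NS}(K(t))$ via Proposition \ref{PropLatticePerp} applied to $\mathrm{Tr}(K(t)) = A(2)$ inside $L_{K3}$.
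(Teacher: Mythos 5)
Part (1) of your proposal is correct and is essentially the paper's argument: the paper takes $v_1=e_1+f_2$, $v_2=e_2+f_1$ instead of your $e_1+e_2$, $f_1+f_2$, but both are pairs of isotropic vectors pairing to $2$, and your coordinate argument for primitivity is exactly the one intended.

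Part (2) has a genuine gap. First, your signature bookkeeping is wrong: $U(2)\oplus A_2(-2)$ has signature $(1,3)$ and $U\oplus E_8(-1)$ has signature $(1,9)$, so the putative orthogonal complement $M$ has signature $(0,6)$ --- it is \emph{negative definite} of rank $6$, not an indefinite lattice of signature $(3,3)$. This kills the shortcut you lean on: Nikulin's existence criterion (and Proposition \ref{PropLatticeUnique} in the paper) applies only to indefinite lattices with $s_+>0$ and $s_->0$, and there is no cheap general statement guaranteeing that a \emph{definite} even lattice with a prescribed rank and discriminant form exists. Moreover, the simple form of Nikulin's embedding theorem for primitive embeddings into even unimodular lattices requires strict inequalities $l_+>t_+$, $l_->t_-$, and here $l_+=t_+=1$, so that route is also blocked. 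In effect, establishing the existence of the rank-$6$ negative definite complement with the right discriminant form is equivalent to the content of the lemma itself, so your ``streamlined'' argument is circular in the place where the actual work lies. Your alternative, explicit route is also never completed: you do not write down four vectors in $U\oplus E_8(-1)$ spanning a copy of $U(2)\oplus A_2(-2)$. The paper resolves this by exhibiting explicit generators $w_1=e+f+p_1$, $w_2=e+f+p_3$, $w_3=p_6+p_8$, $w_4=e-f+p_7$ (with $\{p_i\}$ the standard $E_8(-1)$ root basis) and then, in Lemma \ref{LemOrthogonal}, computing an explicit basis of the orthogonal complement to identify it as $L_6$; to repair your proof you would need to supply such a concrete embedding and verify its Gram matrix and primitivity directly.
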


\begin{proof}
(1) 
If $V$ be an even unimodular lattice of rank $2$,
then there exists a primitive embedding  $V \hookrightarrow U \oplus U$.
The assertion follows from this fact.

(2) Let $\{e,f\}$ be a basis of $U$ similar as above.
 Let $\{p_1,\cdots,p_8\}$ be a basis of $E_8(-1)$ with the Dynkin diagram in Figure 11.
 \begin{figure}[h]
\center
\includegraphics[scale=0.8]{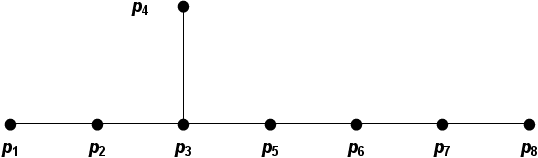}
\caption{Basis of $E_8(-1)$}
\end{figure}
Set $w_1=e+f+p_1, w_2=e+f+p_3, w_3=p_6+p_8$ and $w_4=e-f+p_7$. 
Then,  $\{w_1,w_2,w_3,w_4\}$ gives a basis of  $U(2)\oplus A_2(-2)$ which induces a primitive embedding $U(2) \oplus A_2(-2) \hookrightarrow U \oplus E_8(-1)$.
\end{proof}

\begin{lem}\label{LemOrthogonal}
(1) The intersection matrix of the orthogonal complement of $i_1 (U(2))$ is given by $U(2)$.

(2) The intersection matrix of the orthogonal complement of $i_2 (U(2)\oplus A_2(-2))$ is given by $L_6$ of (\ref{M6}).
\end{lem}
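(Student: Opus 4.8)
\textbf{Proof proposal for Lemma~\ref{LemOrthogonal}.} The plan is to compute each orthogonal complement directly in the explicit bases produced in the proof of Lemma~\ref{LemPrimitive}, and then, for part~(2), to identify the resulting $6$-dimensional lattice with $L_6$ up to a change of basis. For part~(1), with $\{e_1,f_1,e_2,f_2\}$ the standard basis of $U\oplus U$ and $v_1=e_1+f_2$, $v_2=e_2+f_1$ spanning $i_1(U(2))$, I would solve the linear system $(x\cdot v_1)=(x\cdot v_2)=0$ for $x=\alpha e_1+\beta f_1+\gamma e_2+\delta f_2$; one finds the complement is spanned by, e.g., $u_1=e_1-f_2$ and $u_2=e_2-f_1$, with Gram matrix $\left(\begin{smallmatrix}0&2\\2&0\end{smallmatrix}\right)$ after checking $(u_1\cdot u_1)=(u_2\cdot u_2)=0$ and $(u_1\cdot u_2)=2$. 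That is exactly $U(2)$, which also matches the abstract expectation from Proposition~\ref{PropLatticePerp} since $q_{U(2)}\simeq -q_{U(2)}$.

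For part~(2) I would work inside $U\oplus E_8(-1)$ with basis $\{e,f\}\cup\{p_1,\dots,p_8\}$ and the sublattice $N:=i_2(U(2)\oplus A_2(-2))$ spanned by $w_1=e+f+p_1$, $w_2=e+f+p_3$, $w_3=p_6+p_8$, $w_4=e-f+p_7$. The orthogonal complement $N^\perp$ is again cut out by four linear equations in the ten coordinates, so $N^\perp$ has rank $6$; I would write down an explicit $\mathbb{Z}$-basis $\{x_1,\dots,x_6\}$ for the solution lattice (using the known Cartan matrix of $E_8(-1)$ recorded via Figure~11 to evaluate all pairings $(p_i\cdot p_j)$), and then compute the $6\times 6$ Gram matrix. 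This will a priori be some integral symmetric matrix of signature $(1,5)$; to conclude it is isometric to $L_6$ of (\ref{M6}) I would exhibit a unimodular integer matrix $P\in GL_6(\mathbb{Z})$ with $P\,(\text{Gram})\,{}^tP = L_6$, i.e. guess the right reshuffling of the basis. As a sanity check along the way one has $\det L_6$; since $U(2)\oplus A_2(-2)$ has discriminant group of order $2^2\cdot(2^2\cdot 3)=2^4\cdot 3$ and $U\oplus E_8(-1)$ is unimodular, $N^\perp$ must have $|\det|=2^4\cdot 3=48$ as well, and I would verify $|\det L_6|=48$ directly from (\ref{M6}) to make sure no arithmetic slip has occurred.

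The main obstacle I expect is not the complement computation itself — that is routine linear algebra over $\mathbb{Z}$ once the $E_8$ pairings are fixed — but producing the explicit unimodular change of basis exhibiting the isometry with the specific matrix $L_6$ in (\ref{M6}); the "raw" Gram matrix coming out of the kernel computation will look nothing like (\ref{M6}) and one must hunt for the transformation. A cleaner alternative, which I would fall back on if the direct matching is awkward, is to invoke Proposition~\ref{PropLatticeUnique}: $L_6$ (and likewise $N^\perp$) is an even lattice of signature $(1,5)$, and if $l(\mathscr{A}_{L_6})\le 6-2=4$, then $L_6$ is the \emph{unique} lattice in its genus. By Proposition~\ref{PropLatticePerp}, $q_{N^\perp}\simeq -q_{U(2)\oplus A_2(-2)}$, so it suffices to check that $L_6$ has discriminant form $-q_{U(2)}\oplus -q_{A_2(-2)}$ and length $\le 4$ — a finite computation on the order-$48$ group $\mathscr{A}_{L_6}$ — and the isometry $N^\perp\simeq L_6$ follows without ever writing down $P$ explicitly. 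I would present whichever of the two routes turns out shorter, most likely the uniqueness argument for robustness, with the explicit basis left as the computational backbone.
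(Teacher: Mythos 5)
Your primary route --- writing down explicit bases of the two orthogonal complements and computing their Gram matrices --- is exactly what the paper does. For (1) the paper simply declares it obvious (your $u_1=e_1-f_2$, $u_2=e_2-f_1$ works, though with your orientation $(u_1\cdot u_2)=-2$, not $+2$; replacing $u_2$ by $-u_2$ fixes this and is immaterial up to isometry). For (2) the paper exhibits a basis $\{b_1,\dots,b_6\}$ of $\langle w_1,\dots,w_4\rangle^\perp$ in $U\oplus E_8(-1)$ chosen so that its intersection matrix is \emph{already} $L_6$, so the change-of-basis hunt you anticipate is absorbed into the choice of basis. Your determinant sanity check ($|\det|=48$ on both sides) is correct and a useful safeguard.

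However, the fallback you say you would ``most likely'' present is flawed. The lattice $L_6$ is the orthogonal complement of a signature-$(1,3)$ lattice inside $U\oplus E_8(-1)$, which has signature $(1,9)$; hence $L_6$ is \emph{negative definite} of signature $(0,6)$, not $(1,5)$ as you assert (all diagonal entries of (\ref{M6}) are negative, consistent with this). Proposition \ref{PropLatticeUnique} requires $s_+>0$ \emph{and} $s_->0$, so it simply does not apply to $L_6$ or to $N^\perp$: definite even lattices are not in general determined by $(s_+,s_-,q_L)$, and establishing uniqueness in the genus here would require a separate class-number or mass-formula argument that neither you nor the paper supplies. So the ``robust'' route is the one that fails, and the explicit basis computation is not merely the computational backbone but the only viable proof among the two you propose. (For the record, this is also why the paper's proof of Theorem \ref{ThmTrK(t)}(2) applies Proposition \ref{PropLatticeUnique} only to the rank-$16$ indefinite lattice $\tilde{M}$, never to $L_6$ itself.)
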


\begin{proof}
(1) is obvious.
We give a proof of (2).
Under the notation of the proof of Lemma \ref{LemPrimitive},
we have a basis $\{b_1,\cdots,b_6\}$ of the orthogonal complement of $\langle w_1,\cdots,w_4 \rangle$, where
$
b_1= e+ f +p_1 +p_3 ,
b_2= -e -p_1 -p_2 -3p_3 -2p_4 -2p_5 -p_6,
b_3= -f +p_2 + p_3 +2p_5 +p_6, 
b_4= -p_1 -2p_2 -p_3,
b_5=-p_3 -2p_5 -2p_6 -p_7
$
and
$
b_6= p_6 -p_8.
$
The intersection matrix  of this basis is $L_6$ of (\ref{M6}).
\end{proof}

For an even lattice $L$,
let $\mathscr{A}_L$ and $q_L$ be  as in Section 3.1.

\begin{lem}\label{LemQDiscr}
(1) The group $\mathscr{A}_{A_2(-1)}$ is isomorphic to $\mathbb{Z}/6\mathbb{Z}$.
The discriminant form $q_{A_2(-1)}$ is a quadratic form corresponding to the intersection matrix $\begin{pmatrix} -1/3 \end{pmatrix}$.

(2) The group $\mathscr{A}_{U(2)}$ is isomorphic to $(\mathbb{Z}/2\mathbb{Z})^2$.
The discriminant form $q_{U(2)}$  is a quadratic form corresponding to the intersection matrix $\begin{pmatrix} 0 & 1/2 \\ 1/2 & 0 \end{pmatrix}$.
\end{lem}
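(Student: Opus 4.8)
The plan is to compute the discriminant groups and discriminant forms of the two lattices directly from their intersection matrices, since the statement is purely a finite arithmetic computation. For part (1), the lattice $A_2(-1)$ has intersection matrix $\begin{pmatrix} -2 & 1 \\ 1 & -2 \end{pmatrix}$, whose determinant is $3$; hence $\mathscr{A}_{A_2(-1)}$ has order $3$, and since any abelian group of order $3$ is cyclic, $\mathscr{A}_{A_2(-1)} \simeq \mathbb{Z}/3\mathbb{Z}$ — I would note that the statement's ``$\mathbb{Z}/6\mathbb{Z}$'' should read $\mathbb{Z}/3\mathbb{Z}$ (the determinant of the Gram matrix is $3$, not $6$). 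Concretely, I would invert the Gram matrix to find $A_2(-1)^\vee/A_2(-1)$ generated by the class of $\tfrac{1}{3}(2 e_1 + e_2)$ (or an equivalent generator), and then evaluate the $\mathbb{Q}/2\mathbb{Z}$-valued quadratic form on that generator, obtaining a value of $-\tfrac{2}{3} \equiv \tfrac{4}{3} \pmod{2}$; after rescaling the generator this is the form $\langle -1/3 \rangle$ (equivalently $\langle 2/3 \rangle$), matching the claimed $1\times 1$ matrix $(-1/3)$ up to the standard identification of discriminant forms on $\mathbb{Z}/3\mathbb{Z}$.

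For part (2), I would take the standard basis $\{e,f\}$ of $U(2)$ with Gram matrix $\begin{pmatrix} 0 & 2 \\ 2 & 0 \end{pmatrix}$, determinant $-4$. Then $U(2)^\vee$ is spanned by $\tfrac12 e$ and $\tfrac12 f$, so $\mathscr{A}_{U(2)} = U(2)^\vee/U(2)$ is generated by the classes of $\tfrac12 e$ and $\tfrac12 f$, each of order $2$, giving $(\mathbb{Z}/2\mathbb{Z})^2$. Evaluating the discriminant quadratic form: $q(\tfrac12 e) = \tfrac14 (e,e) = 0$, $q(\tfrac12 f) = 0$, and the associated bilinear form gives $b(\tfrac12 e, \tfrac12 f) = \tfrac14 (e,f) = \tfrac12 \pmod{\mathbb{Z}}$. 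Hence $q_{U(2)}$ is the form on $(\mathbb{Z}/2\mathbb{Z})^2$ with Gram matrix $\begin{pmatrix} 0 & 1/2 \\ 1/2 & 0 \end{pmatrix}$, which is exactly the claim.

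Since both parts are finite computations with explicit small matrices, there is no real obstacle; the only subtlety is bookkeeping with the $\mathbb{Q}/2\mathbb{Z}$ versus $\mathbb{Q}/\mathbb{Z}$ conventions and fixing the correct generator of the cyclic group in part (1). The main thing I would be careful about is the discrepancy in the order of $\mathscr{A}_{A_2(-1)}$: the paper writes $\mathbb{Z}/6\mathbb{Z}$, but $|\det| = 3$ for $A_2(-1)$, so I would either correct this to $\mathbb{Z}/3\mathbb{Z}$ or, if $A_2$ here is meant with a different normalization, recheck the Gram matrix; in either case the discriminant form on the resulting cyclic group is the one stated. Once the discriminant forms are in hand, the verification reduces to a one-line check against the given $1\times 1$ and $2\times 2$ matrices.
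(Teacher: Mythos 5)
Your method---computing $\mathscr{A}_L=L^\vee/L$ and $q_L$ directly from the Gram matrix---is exactly the paper's, and your part (2) agrees with the paper's proof word for word (generators $\tfrac{1}{2}E,\tfrac{1}{2}F$, with $q=0$ on each and $b=\tfrac12$ between them). The divergence is in part (1), where your diagnosis of the discrepancy goes the wrong way. The paper's own proof works with a basis $\{n_1,n_2\}$ satisfying $(n_1\cdot n_1)=(n_2\cdot n_2)=-4$ and $(n_1\cdot n_2)=2$, i.e.\ with $A_2(-2)$ rather than $A_2(-1)$; this is the lattice the lemma is actually applied to (namely $U(2)^{\oplus 2}\oplus A_2(-2)$ in the proof of Theorem \ref{ThmTrK(t)}(2)), and it explains the value $-1/3$, since $q\bigl(\tfrac{n_1+2n_2}{6}\bigr)=\tfrac{1}{36}(-4+8-16)=-\tfrac13$. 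So ``$A_2(-1)$'' in the statement is a slip for $A_2(-2)$, and your proposed correction of the group to $\mathbb{Z}/3\mathbb{Z}$ computes the discriminant of the wrong lattice. Moreover, your remark that $\langle -2/3\rangle$ on $\mathbb{Z}/3\mathbb{Z}$ becomes $\langle -1/3\rangle$ ``after rescaling the generator'' is false: replacing $g$ by $2g$ leaves $q(g)$ unchanged in $\mathbb{Q}/2\mathbb{Z}$ (because $3q(g)\in 2\mathbb{Z}$), and $-1/3$ is not an admissible value of a $\mathbb{Q}/2\mathbb{Z}$-valued finite quadratic form on an element of order $3$ at all; likewise $\langle 4/3\rangle\not\simeq\langle 2/3\rangle$.

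That said, you were right to smell trouble: even read as a statement about $A_2(-2)$, part (1) is incorrect, and the paper's one-line proof glosses over this. One has $|\det A_2(-2)|=12$, and $L^\vee/L$ is generated by the two classes $\tfrac{2n_1+n_2}{6}$ and $\tfrac{n_1+2n_2}{6}$, whose sum is $\tfrac{n_1+n_2}{2}$; this gives $\mathscr{A}_{A_2(-2)}\simeq\mathbb{Z}/2\mathbb{Z}\oplus\mathbb{Z}/6\mathbb{Z}$, and the single element exhibited in the paper generates only an index-two subgroup. The full discriminant form is represented in these two generators by $\begin{pmatrix}-1/3 & -1/6\\ -1/6 & -1/3\end{pmatrix}$ (diagonal entries read in $\mathbb{Q}/2\mathbb{Z}$, the off-diagonal one in $\mathbb{Q}/\mathbb{Z}$). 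This does not sink the application: in the proof of Theorem \ref{ThmTrK(t)}(2) one then gets $l(\mathscr{A}_{\tilde M})=6\le 14$ instead of $5\le 14$, and the uniqueness argument via Proposition \ref{PropLatticeUnique} goes through with the corrected $q_{A_2(-2)}$. A complete write-up should state and prove the lemma for $A_2(-2)$ with the group $\mathbb{Z}/2\mathbb{Z}\oplus\mathbb{Z}/6\mathbb{Z}$, rather than either of the two cyclic groups under discussion.
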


\begin{proof}
(1) Let $\{n_1,n_2\}$ be a basis of $A_2(-2)$ such that 
$(n_1\cdot n_1)=(n_2 \cdot n_2)=-4$ and $(n_1\cdot n_2)=2$.
Then, $\mathscr{A}_{A_2(-2)}=\langle \frac{ n_1 + 2 n_2}{6} \rangle_\mathbb{Z}$.
The form of $q_{A_2(-2)}$ follows immediately.

(2) Let $\{E,F\}$ be a basis of $U(2)$ such that 
$(E\cdot E)=(F \cdot F)=0$ and $(E\cdot F)=2$.
Then, $\mathscr{A}_{U(2)}=\langle \frac{E}{2}, \frac{F}{2} \rangle_\mathbb{Z}$.
The form of $q_{U(2)}$ follows immediately.
\end{proof}

Now, we give a proof of
Theorem \ref{ThmTrK(t)} (2).

\begin{proof}[Proof of Theorem \ref{ThmTrK(t)} (2)]
Letting $\tilde{M}$ be the orthogonal complement of $U(2) \oplus U(2)\oplus A_2(-2)$ in $L_{K3}$,
according to Proposition \ref{PropLatticePerp} and Lemma \ref{LemQDiscr}, 
we have
$$
l(\mathscr{A}_{\tilde{M}})=5 \leq 14 = {\rm rank}(\tilde{M})-2.
$$
From Proposition \ref{PropLatticeUnique}, 
$\tilde{M}$ is uniquely determined by the invariants $(1,15,-(q_{U(2)} \oplus q_{U(2)} \oplus q_{A_2(-2)}))$.

Since we have the primitive embeddings $i_1$ and $i_2$ in Lemma \ref{LemPrimitive},
together with Lemma \ref{LemOrthogonal},
$\tilde{M}$ is regarded as a primitive sublattice of $L'=U\oplus U\oplus U\oplus E_8(-1)$
with the orthogonal complement $U(2) \oplus L_6$.
Also, the orthogonal complement of $L'$ in $L_{K3}$ is $E_8(-1)$.
By virtue of Proposition \ref{PropLatticePerp}, 
the lattice 
$U(2) \oplus E_8(-1) \oplus L_6$ has the invariants $(1,15,-(q_{U(2)} \oplus q_{U(2)} \oplus q_{A_2(-2)}))$.
So, $\tilde{M}$ is isometric to this lattice.
\end{proof}

\subsection{Multivalued period mapping for $\mathcal{G}_1$}

As in \cite{Na} Section 1.3, we can define a marking $H_2(\hat{K} (t), \mathbb{Z}) \rightarrow L_{K3}$ for $[t]\in T$
using a basis of ${\rm NS}(\hat{K} (t))$ and the analytic continuation in the parameter space $T$ of (\ref{TPara}).
Here, the $2$-cycles on the reference surface $\hat{K}_0$ give the initial marking. 
Via such a marking, we can define the period mapping for the family of $\hat{K} (t)$, also.
We can obtain an  expression of the period mapping for $\hat{K} (t)$  under the notation of this section as follows.
Set
$(H_1,\cdots,H_6)=(G_1,\cdots,G_6) {}^t M_J$.
We can expand the system $\{H_1,\cdots, H_6\}$
to $\{H_1,\cdots, H_{22}\}$,
which gives a basis of $H_2(\hat{K}_0,\mathbb{Z}).$
Let $\{D_1,\cdots,D_{22}\}$ be the dual basis of $\{H_1,\cdots,H_{22}\}$
with respect to the unimodular lattice $L_{K3}$.
By using the above mentioned  marking,
we can naturally obtain $2$-cycles $D_{1,t},\ldots, D_{6,t} \in H_2(\hat{K} (t),\mathbb{Z})$.
Now, let us recall 
 the period mapping (\ref{PerPhi}) for the family of the partner surface $S(t)$.
Since it holds ${\rm Tr}(\hat{K} (t)) = {\rm Tr}(S(t)) (2)$,
the Riemann-Hodge relations for $\hat{K}(t)$ are equal to those for $S(t)$.
Especially, the period domain for the family of $\hat{K}(t)$ coincides with $\mathcal{D}$ which appears in Section 2.
Hence, we obtain the following theorem.

\begin{thm}\label{MPThm}
The multivalued period mapping for the family of $\hat{K}(t)$,
which is birationally equivalent to $K(t)$ of (\ref{KE}),
 coincides with (\ref{PerPhi}):
$$
T \ni [t] \mapsto 
\Big(\int_{D_{1,t}}\omega_t:\cdots : \int_{D_{6,t}} \omega_t\Big)=\Big(\int_{\Delta_{1,t}}\omega_{[t]}^S:\cdots:\int_{\Delta_{6,t}}\omega_{[t]}^S\Big) \in \mathcal{D},
$$
where $\omega_t$ is the holomorphic $2$-form of (\ref{eq: holoformS0}) on $\hat{K} (t)$.
\end{thm}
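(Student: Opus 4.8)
The plan is to reduce the statement to the Hodge-theoretic content of the symplectic (Nikulin) quotient $\pi\colon\hat{K}(t)\dashrightarrow S(t)$ --- the composite of the birational equivalence $\hat{K}(t)\sim K(t)$ of Theorem \ref{ThmKS}(1) with the double cover $K(t)\dashrightarrow S(t)$ of Theorem \ref{ThmKS}(3) --- and then to check that the explicitly constructed cycles $D_{j,t}$ of this section are matched, via $\pi$, with the cycles $\Delta_{j,t}$ of \cite{NS}.

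First I would record the lattice input. By Theorem \ref{ThmTrK(t)}(1) and Proposition \ref{NSTrProp}(1) one has ${\rm Tr}(\hat{K}(t))\simeq A(2)$ and ${\rm Tr}(S(t))\simeq A$, hence ${\rm Tr}(\hat{K}(t))\simeq{\rm Tr}(S(t))(2)$. Since $A(2)\otimes\mathbb{C}=A\otimes\mathbb{C}$ and the conditions $(\xi,\xi)=0$, $(\xi,\overline{\xi})>0$ cutting out $\mathcal{D}_M$ are unchanged by rescaling the form, the Riemann--Hodge relations for $\hat{K}(t)$ are literally those for $S(t)$, the period domain of the family of $\hat{K}(t)$ is the same $\mathcal{D}$ of Section 2, and $[t]\mapsto(\int_{D_{1,t}}\omega_t:\cdots:\int_{D_{6,t}}\omega_t)$ lands in $\mathcal{D}$. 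Moreover, because $\omega_t\perp{\rm NS}(\hat{K}(t))$ and $(H_i\cdot D_j)=\delta_{ij}$, the numbers $\int_{D_{j,t}}\omega_t=(\omega_t\cdot D_{j,t})$ ($j=1,\dots,6$) are exactly the coordinates of $\omega_t$ in the basis $H_1,\dots,H_6$ of ${\rm Tr}(\hat{K}(t))$.

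Next I would use that $\pi$ is symplectic. On the one hand $\pi^{*}$ sends the holomorphic $2$-form $\omega^{S}_{[t]}$ of $S(t)$ to a nonzero multiple of $\omega_t$. On the other hand $\iota_{*}$ acts as the identity on ${\rm Tr}(\hat{K}(t))$, so $\pi^{*}\pi_{*}=1+\iota_{*}=2$ there and $\pi_{*}\pi^{*}=2$ on ${\rm Tr}(S(t))$; hence $\pi^{*}\colon{\rm Tr}(S(t))\to{\rm Tr}(\hat{K}(t))$ is injective with $(\pi^{*}x,\pi^{*}y)=2(x,y)$, its image is isometric to $A(2)\simeq{\rm Tr}(\hat{K}(t))$, and so (equal ranks and discriminants) $\pi^{*}$ is an isomorphism ${\rm Tr}(S(t))\xrightarrow{\sim}{\rm Tr}(\hat{K}(t))$ multiplying the form by $2$, with $\pi_{*}\pi^{*}=2\cdot\mathrm{id}$. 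If the marking of $\hat{K}(t)$ is chosen so that its transcendental part is $\pi^{*}$ applied to the transcendental marking of $S(t)$ of \cite{NS}, the projection formula then gives that each $\int_{D_{j,t}}\omega_t$ is a fixed nonzero multiple of $\int_{\pi_{*}D_{j,t}}\omega^{S}_{[t]}=2\int_{\Delta_{j,t}}\omega^{S}_{[t]}$; so the two period vectors are proportional, the period points coincide in $\mathbb{P}(A\otimes\mathbb{C})$, and the two monodromy groups are identified. This shows some branch of the multivalued period map of $\hat{K}(t)$ equals the branch $\Phi$ of (\ref{PerPhi}).

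I expect the main obstacle to be upgrading this to the displayed equality for the \emph{specific} cycles $D_{j,t}$ built from $H_1,\dots,H_6$, i.e.\ checking that $\tfrac12\pi_{*}(D_{j,t})$ is homologous to $\Delta_{j,t}$ modulo ${\rm NS}(S(t))$, up to the fixed unimodular change of basis $M_J$ and an element of $\Gamma$. I would verify this at the reference surface by pushing the circuit system $\{\delta_{zi},\delta_{zi}'\}$ and the $2$-cycles $\Gamma_i,\Gamma_i'$ on $\hat{K}_0$ through the double cover --- recall they were themselves built over the cover $\hat{K}_0\to\Sigma_0$ of (\ref{eq:surfaceSigma0}) --- and comparing with the construction in \cite{NS}; since both families lie over the same base $T$ and $T\simeq\mathcal{D}/\Gamma$ by (\ref{PhiIso}), a match at the reference point propagates by analytic continuation to all of $T$. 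Alternatively one can bypass this bookkeeping entirely by invoking the uniqueness of the period map for $(U(2)\oplus E_8(-1)\oplus L_6)$-polarized $K3$ surfaces together with $T\simeq\mathcal{D}/\Gamma$: this already forces the two multivalued maps to coincide up to $\Gamma$, and the explicit double cover pins down the branch.
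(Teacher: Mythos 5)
Your proposal is correct and follows essentially the same route as the paper: the paper's entire argument is the observation that ${\rm Tr}(\hat{K}(t))={\rm Tr}(S(t))(2)$ forces the Riemann--Hodge relations and hence the period domain $\mathcal{D}$ to coincide, with the marking fixed by the geometric cycles $G_1,\dots,G_6$ on the reference surface $\hat{K}_0$ and analytic continuation over $T$. You supply more justification than the paper does for the actual equality of period vectors (the $\pi^{*}\pi_{*}=2$ computation and the matching of $D_{j,t}$ with $\Delta_{j,t}$, which the paper leaves implicit in ``Hence, we obtain the following theorem''), and your fallback via $T\simeq\mathcal{D}/\Gamma$ is consistent with the caveat the paper itself records after the theorem.
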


By virtue of the argument at the end of Section 2,
we have the following corollary.

\begin{cor}\label{ThetaCor}
One has an explicit expression of the inverse correspondence of the period mapping for the family of $K(t)$ by the Dern-Krieg theta functions.
The theta expression is the same as that of \cite{NS} Theorem 4.1.
\end{cor}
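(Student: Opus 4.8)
The plan is to derive the corollary directly from Theorem \ref{MPThm}, combined with Theorem \ref{ThmModular} and the identification of the $t_j$ with Dern--Krieg theta functions recalled at the end of Section 2. First I would note that Theorem \ref{MPThm} gives more than a pointwise equality of periods: since ${\rm Tr}(\hat{K}(t)) = {\rm Tr}(S(t))(2)$, the Riemann--Hodge bilinear relations for $\hat{K}(t)$ are exactly the relations defining the period domain $\mathcal{D}$ of Section 2, and the marking constructed in Section 4.5 from a $\mathbb{Z}$-basis of ${\rm NS}(\hat{K}(t))$ and analytic continuation over $T$ is set up so that the monodromy of the family of $\hat{K}(t)$ lies in the same group $\Gamma = \tilde{O}^+(A)$ of (\ref{DefGamma}). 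Consequently the multivalued period mapping for $\mathcal{G}_1$ descends to a single-valued isomorphism $T \to \mathcal{D}/\Gamma$ that coincides with $\bar{\Phi}$ of (\ref{PhiIso}); in particular it has the same inverse.

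Next I would invoke Theorem \ref{ThmModular}: the inverse of $\bar{\Phi}$, viewed as a map $\mathcal{D}^* \to \mathbb{C}^5$, is $Z \mapsto (t_4(Z), t_6(Z), t_{10}(Z), t_{12}(Z), t_{18}(Z))$, the five generators of the ring $\mathcal{A}(\Gamma,{\rm id})$ of modular forms of character ${\rm id}$ for $\Gamma$. By the discussion following Theorem \ref{ThmModular}, under the biholomorphism $\mathcal{D} \simeq \mathbb{H}_I$ each $t_j$ is a Hermitian modular form for $SU(2,2)$ over the imaginary quadratic field of smallest discriminant, and by \cite{NS} Theorem 4.1 it is written explicitly in terms of the Dern--Krieg theta functions of \cite{DK}. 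Composing these, the inverse of the period mapping for the family of $K(t)$ --- which by Theorem \ref{ThmKS}(1) is birationally the family of $\hat{K}(t)$ --- is given by precisely the same theta formulas, which is the content of the corollary.

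The one point requiring care, and the step I would flag as the main obstacle, is the compatibility of the two markings: one must ensure that the cycles $D_{1,t},\dots,D_{6,t}$ obtained by continuation in Section 4.5 are carried to the same vectors as the cycles $\Delta_{1,t},\dots,\Delta_{6,t}$ of (\ref{PerPhi}), and not to a basis differing by an element of $O^+(A) \setminus \Gamma$ or by an overall scaling. This is exactly what the proof of Theorem \ref{MPThm} provides, through the reference surface $\hat{K}_0$ and the relation $\int_{D_{i,t}} \omega_t = \int_{\Delta_{i,t}} \omega_{[t]}^S$. So in the write-up I would simply quote Theorems \ref{MPThm} and \ref{ThmModular}, observe that the target domain and arithmetic group are unchanged, and conclude; no computation beyond that of \cite{NS} is needed.
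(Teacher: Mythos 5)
Your proposal is correct and follows essentially the same route as the paper: the paper deduces the corollary immediately from Theorem \ref{MPThm} together with the identification, recalled at the end of Section 2, of the $t_j$ with Hermitian modular forms admitting Dern--Krieg theta expressions as in \cite{NS} Theorem 4.1. Your additional remarks on the compatibility of markings only make explicit what Theorem \ref{MPThm} already supplies, so no new argument is needed.
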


Thus,
we have an analytic correspondence 
between the period integrals on $K(t) $ and the parameters $t_j$ via the theta functions.

 We note that the family of our explicit surfaces $K(t)$ does not coincide with the set of equivalence classes of marked lattice polarized $K3$ surfaces for the lattice $A(2)^\perp (\subset L_{K3})$ in the sense of \cite{D}.
The moduli space of such polarized $K3$ surfaces is given by a covering of the parameter space $T$ of (\ref{TPara}).
Recall that  $T$   just gives  the moduli space of marked $(U\oplus E_8(-1) \oplus E_6(-1))$-polarized $K3$ surfaces via (\ref{PhiIso}).
 Nevertheless,
 the expression of the multivalued period mapping in Theorem \ref{MPThm} and the theta expression mentioned in Corollary \ref{ThetaCor} are valid.

 \section{Sandwich phenomenon}
 
 Shioda \cite{S} studies the following explicit defining equations of the Kummer surface ${\rm Kum}(E_1\times E_2)$ for the product of two elliptic curves and a $K3$ surface $S_{{\rm Shio}}$:
 \begin{align}\label{ShiodaEq1}
{\rm Kum}(E_1\times E_2) :&\quad Z^2 = Y^3 -3 \alpha Y +\Big(U^2 +\frac{1}{U^2} -2\beta \Big), \\ \label{ShiodaEq2}
S_{{\rm Shio}} :&\quad Z^2 = Y^3 -3 \alpha Y +\Big(X+\frac{1}{X} -2\beta \Big) ,
\end{align}
where these equations define elliptic surfaces and $\alpha$ and $\beta$ are complex parameters.
Let $\mathcal{G}_{{\rm Shio}}$  ($\mathcal{F}_{{\rm Shio}}$, resp.) be the family of all ${\rm Kum} (E_1\times E_2)$ ($S_{\rm Shio}$, resp.). 
 He shows that there are double coverings $\varphi_{{\rm Shio}}$ and $\psi_{{\rm Shio}}$ such that
 \begin{align*}
 {\rm Kum}(E_1\times E_2) \overset{\varphi_{{\rm Shio}}}{\dashrightarrow} S_{{\rm Shio}} \overset{\psi_{{\rm Shio}}}{\dashrightarrow} {\rm Kum}(E_1\times E_2).
 \end{align*}
 Explicitly, $\varphi_{{\rm Shio}}$ and $\psi_{{\rm Shio}}$ are derived from  the involutions
 \begin{align}\label{ShiodaInvolution}
 (U,Y,Z)\mapsto (-U,Y,Z) , \quad \quad (X,Y,Z) \mapsto \Big( \frac{1}{X}, Y, -Z \Big),
 \end{align}
 respectively.
 He calls this phenomenon the Kummer sandwich.
 We note that
 studies for the Kummer sandwich  are  active 
 and
 there are several recent works 
 (for example, see \cite{BMS} or \cite{MS}).
 
 The surface $K(t)$ of  (\ref{KE}) ($S(t)$ of (\ref{SE}), resp.) can be regarded as a natural extension of ${\rm Kum}(E_1\times E_2)$ ($S_{{\rm Shio}}$, resp.).
 In this section, we will study natural and explicit counterparts of the Kummer sandwich phenomenon.

 \subsection{Explicit expression of Kummer sandwich of general type}

 Following the work \cite{S},
 Ma \cite{Ma} proves that
 an arbitrary Kummer surface ${\rm Kum}(\mathfrak{A})$,
 where $\mathfrak{A}$ is a principally polarized Abelian surface,
 admits the Kummer sandwich.
 Namely,
 there are double coverings $\varphi_{{\rm Kum}} $ and $\psi_{{\rm Kum}}$ such that
 \begin{align}\label{MaKummer}
   {\rm Kum}(\mathfrak{A}) \overset{\varphi_{{\rm Kum}}}{\dashrightarrow} S_{{\rm CD}} \overset{\psi_{{\rm Kum}}}{\dashrightarrow} {\rm Kum}(\mathfrak{A})
  \end{align}
  (see \cite{Ma} Theorem 2.5).
  Here, $S_{{\rm CD}}$ is a lattice polarized $K3$ surface with the transcendental lattice $U\oplus U\oplus A_1(-1)$.
The family of  such  $K3$ surfaces coincides with the family studied in \cite{CD} from the viewpoint of the Siegel modular forms.
  This is the reason why we use the notation  $S_{{\rm CD}}$.
  The family $\mathcal{G}_{{\rm Kum}}$ ($\mathcal{F}_{{\rm CD}}$, resp.) in Introduction is the family of ${\rm Kum}(\mathfrak{A})$ ($S_{{\rm CD}}$, resp.)
We note that the proof of \cite{Ma} is based on a lattice theoretic argument,
 but he does not give explicit forms of the defining equations and the double coverings like (\ref{ShiodaEq1}) (\ref{ShiodaEq2}) and (\ref{ShiodaInvolution}).

  As an application of Theorem \ref{ThmKS}, 
 let us give a simple  expression of $\varphi_{{\rm Kum}} $ and $\psi_{{\rm Kum}}$ in (\ref{MaKummer}).
 This explicit result gives a natural extension of (\ref{ShiodaInvolution})  of \cite{S}.

 \begin{thm}\label{ThmMaSand}
 The Kummer surface ${\rm Kum}(\mathfrak{A}) $ for a principally polarized Abelian surface $\mathfrak{A}$ is given by the Weierstrass equation
 \begin{align}\label{Kum(A)}
 Z^2=Y^3 +\Big( t_4  + \frac{t_{10}}{U^2} \Big)Y +\Big( t_6 + U^2+ \frac{ t_{12}}{U^2} \Big). 
 \end{align}
Also, the $K3$ surface 
 $S_{{\rm CD}}$ is given by the Weierstrass equation
 \begin{align}\label{SCD}
 Z^2=Y^3 + \Big(t_4+\frac{t_{10}}{X} \Big)Y +\Big( t_6 + X+ \frac{ t_{12}}{X} \Big).  
 \end{align}
 The mappings $\varphi_{{\rm Kum}}$ and $\psi_{{\rm Kum}}$, which give the Kummer sandwich (\ref{MaKummer}), are explicitly given by 
 the Nikulin involutions
 \begin{align*}
 \iota_{\varphi,{\rm Kum}}: (U,Y,Z)\mapsto (-U,Y,Z),  \quad \iota_{\psi,{\rm Kum}}: (X,Y,Z)\mapsto \Big(\frac{t_{10} Y + t_{12}}{X}, Y, -Z\Big),
 \end{align*}
 respectively.
  \end{thm}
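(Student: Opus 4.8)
The plan is to derive the whole statement from Theorem~\ref{ThmKS} together with a single explicit quotient computation. First I would set up the families. By Theorem~\ref{ThmKS}(1),(2) the surfaces $K(t)$ and $S(t)$ are birationally (\ref{KE}) and (\ref{SE}), and putting $t_{18}=0$ these specialize exactly to (\ref{Kum(A)}) and (\ref{SCD}). By the discussion in the Introduction the subfamily $\pi_{\mathcal{G}_1}^{-1}(\{t_{18}=0\}\cap T)$ is $\mathcal{G}_{\rm Kum}$, so (\ref{Kum(A)}) presents ${\rm Kum}(\mathfrak{A})$; by Proposition~\ref{PropPartnerLattice}(2) the corresponding restriction of $\mathcal{F}_1$ is $\mathcal{F}_{\rm CD}$, whose generic member has transcendental lattice $U\oplus U\oplus A_1(-1)$, that is, is Ma's surface $S_{\rm CD}$, so the sandwich (\ref{MaKummer}) is guaranteed by \cite{Ma}. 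The map $\varphi_{\rm Kum}$ together with $\iota_{\varphi,{\rm Kum}}$ is then just the specialization of Theorem~\ref{ThmKS}(3) to $t_{18}=0$, so nothing further is needed for that arrow.

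The content is the second arrow. I would first check by direct substitution that $\iota_{\psi,{\rm Kum}}\colon(X,Y,Z)\mapsto\bigl((t_{10}Y+t_{12})/X,\,Y,\,-Z\bigr)$ is an involutive automorphism of (\ref{SCD}): writing $p=p(Y)=t_{10}Y+t_{12}$ and $X'=p/X$ one has $\{X',p/X'\}=\{p/X,X\}$, hence $X'+p/X'=X+p/X$, so the right-hand side of (\ref{SCD}) is unchanged and $(-Z)^2=Z^2$, while applying the map twice sends $X$ to $p/X'=X$. Thus it defines a biregular involution of the $K3$ surface $S_{\rm CD}$. Using the holomorphic $2$-form $\omega=\frac{dX\wedge dY}{XZ}$ inherited from $\hat{S}(t)$ through the substitution of Theorem~\ref{ThmKS}(2) (which absorbs the $1/X$ into the form), a one-line computation gives $\iota_{\psi,{\rm Kum}}^{*}\omega=\omega$, so $\iota_{\psi,{\rm Kum}}$ is a Nikulin involution; as recalled in Section~3.1, the minimal resolution $\mathfrak{Y}$ of $S_{\rm CD}/\langle\iota_{\psi,{\rm Kum}}\rangle$ is then a $K3$ surface.

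It remains to identify $\mathfrak{Y}$ with ${\rm Kum}(\mathfrak{A})$, which is the core of the proof. I would compute the quotient via the $\iota_{\psi,{\rm Kum}}$-invariant rational functions $Y$, $S=X+\frac{p}{X}$ and $V=Z\bigl(X-\frac{p}{X}\bigr)$; since $\bigl(X-\frac{p}{X}\bigr)^{2}=S^{2}-4p$ and $Z^{2}=Y^{3}+t_4Y+t_6+S$, the fixed field is $\mathbb{C}(Y,S,V)$ subject to $V^{2}=(Y^{3}+t_4Y+t_6+S)(S^{2}-4t_{10}Y-4t_{12})$, an elliptic fibration over $\mathbb{P}^{1}_{Y}$. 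On the other hand (\ref{Kum(A)}), multiplied by $U^{2}$, becomes $(UZ)^{2}=U^{4}+(Y^{3}+t_4Y+t_6)U^{2}+(t_{10}Y+t_{12})$, a genus-one fibration over the same base $\mathbb{P}^{1}_{Y}$ carrying the section at $U=\infty$. Passing both to Weierstrass form over $\mathbb{P}^{1}_{Y}$ -- completing the cube in $S$ for $\mathfrak{Y}$, and using the classical formulae for the Jacobian of a quartic genus-one curve for (\ref{Kum(A)}) -- one obtains in both cases the same Weierstrass equation (up to an overall rescaling), governed by $I(Y)=(Y^{3}+t_4Y+t_6)^{2}+12(t_{10}Y+t_{12})$ and $J(Y)=72(Y^{3}+t_4Y+t_6)(t_{10}Y+t_{12})-2(Y^{3}+t_4Y+t_6)^{3}$. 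Since both fibrations admit a section, this identifies $\mathfrak{Y}$ birationally with ${\rm Kum}(\mathfrak{A})$ for the same parameters $t_j$; composing with Theorem~\ref{ThmKS}(3) then shows that $\varphi_{\rm Kum}$ and $\psi_{\rm Kum}$ realize the sandwich (\ref{MaKummer}).

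The main obstacle is exactly this last identification. The quotient $\mathfrak{Y}$ comes out in a ``cubic in $S$'' model while ${\rm Kum}(\mathfrak{A})$ is given in a ``quartic in $U$'' model, and the naive map between them defined by the invariant functions has degree $4$ rather than $1$; so one cannot avoid genuinely passing through the Weierstrass normal forms (equivalently, writing down the correct birational transformation via the quartic-to-Jacobian formulae), and the only real care needed is organizing that computation. As a cross-check one could also verify, using Lemma~\ref{LemNikulin}, that the transcendental lattice of $\mathfrak{Y}$ is $U(2)\oplus U(2)\oplus A_1(-2)$, but the explicit birational identification is both stronger and what the statement requires.
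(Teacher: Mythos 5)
Your overall strategy is the paper's: specialize Theorem \ref{ThmKS} to $t_{18}=0$ for the arrow $\varphi_{\rm Kum}$, verify that $\iota_{\psi,{\rm Kum}}$ is a Nikulin involution on (\ref{SCD}), and compute the quotient explicitly via the invariants $Y$, $X+p/X$, $Z(X-p/X)$ (the paper uses $\frac{1}{Z}(X-p/X)$, which generates the same field and yields its equation (\ref{EqU0V0})). Where you diverge is the final identification: the paper writes down the concrete birational substitution $(U_0,V_0)=(2U(U+Z),2U)$ carrying the quotient model directly onto (\ref{Kum(A)}), whereas you pass both the cubic-in-$S$ quotient and the quartic-in-$U$ model of (\ref{Kum(A)}) to their relative Jacobians over $\mathbb{P}^1_Y$ and match the invariants $I=c^2+12p$, $J=72cp-2c^3$. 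Your computation is correct (both fibrations carry sections, so agreement of Jacobians does give a fiberwise birational equivalence), and it is a legitimate, more systematic substitute for the paper's one-line change of variables, at the cost of not exhibiting the map.

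There is, however, one genuine logical gap: nothing in your argument certifies that the surface (\ref{Kum(A)}) — equivalently the quotient $\mathfrak{Y}$ — \emph{is} ${\rm Kum}(\mathfrak{A})$ for a principally polarized abelian surface, which is the first assertion of the theorem. Your appeal to the Introduction is circular (the Introduction merely announces what this theorem proves), and Ma's result \cite{Ma} only asserts the \emph{existence} of some double covering $S_{\rm CD}\dashrightarrow{\rm Kum}(\mathfrak{A})$; it does not say that the quotient by \emph{this particular} Nikulin involution is a Kummer surface, since not every Nikulin involution on $S_{\rm CD}$ realizes the Shioda--Inose structure. The step you relegate to an optional ``cross-check'' — computing via Lemma \ref{LemNikulin} that ${\rm Tr}(\mathfrak{Y})\simeq U(2)^{\oplus 2}\oplus A_1(-2)={\rm Tr}(S_{\rm CD})(2)$ and invoking Morrison's characterization (\cite{Mo} Theorem 5.7) of Kummer surfaces of principally polarized abelian surfaces — is in fact the indispensable ingredient, and it is exactly how the paper closes this point. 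The explicit birational identification is not ``stronger'' in the relevant sense: it identifies $\mathfrak{Y}$ with an equation, but only the lattice-theoretic argument identifies that equation as a Kummer surface. Once you promote the cross-check to a required step, your proof is complete.
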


 \begin{proof}
 In \cite{Na}, the $K3$ surface (\ref{S(t)}) with the transcendental lattice $U\oplus U\oplus A_2(-1)$ is  introduced as an extension of the $K3$ surface $S_{{\rm CD}}$ with the transcendental lattice $U\oplus U\oplus A_1(-1)$ (see also Proposition \ref{PropPartnerLattice} (2)).
 Namely, if $t_{18}=0$,  then (\ref{S(t)})  degenerates to $S_{{\rm CD}}$.
 So, together with Theorem \ref{ThmKS}, the surface defined by (\ref{SCD}) is birationally equivalent to $S_{{\rm CD}}$.
 
 The involution $\iota_{\psi,{\rm Kum}}$  defines a Nikulin involution on (\ref{SCD}).
 Since $S_{{\rm CD}}$ admits a Shioda-Inose structure in the sense of \cite{Mo}, 
 the minimal resolution of the quotient $S_{{\rm CD}}/\langle \iota_{\psi,{\rm Kum}}\rangle $ coincides with  ${\rm Kum}(\mathfrak{A})$.
 This image is defined by the equation
 \begin{align}\label{EqU0V0}
 U_0 V_0^2 - U_0^2+ 4 t_{12}  + t_6 V_0^2 +  4 t_{10} Y + t_4 V_0^2 Y + V_0^2 Y^3=0,
 \end{align}
 where $U_0=X+\frac{t_{10} Y + t_{12}}{X}$ and $V_0=\frac{1}{Z} (X - \frac{t_{10}Y+t_{12} }{X})$.
 By the birational transformation $(U_0,V_0)=(2U(U+Z),2U)$, (\ref{EqU0V0}) is transformed to (\ref{Kum(A)}) with the transcendental lattice $U(2)\oplus U(2) \oplus A_1(-2)$
 by virtue of \cite{Mo} Theorem 5.7.
 Therefore, the equation (\ref{Kum(A)}) defines a general algebraic Kummer surface.
 
  The involution $\iota_{\varphi,{\rm Kum}}$ is just a special case of the involution of Theorem \ref{ThmKS}.
 \end{proof}

 \begin{rem}
 The involution $\iota_{\psi,{\rm Kum}}$ is equal to the van Geemen-Sarti involution (see \cite{GS} Section 4)
 for the family of $S_{{\rm CD}}$ studied in \cite{CD}.
 \end{rem}

 Theorem \ref{ThmMaSand} also gives a natural visualization of the Kummer sandwich phenomenon  and Siegel modular forms of degree $2$.
 Recall that the ring of Siegel modular forms on the $3$-dimensional Siegel upper half plane $\mathfrak{S}_2$  of the trivial character is generated by the modular forms of weight $4,6,10,12$ and $35$. 
 In fact,  each of the parameters $t_j$ $(j\in \{4,6,10,12\})$  in Theorem \ref{ThmMaSand} gives a member of a system of  generators   of the ring of Siegel modular forms  via the period mapping for the family of $S_{{\rm CD}}$.
 Also, the generator of weight $35$ and modular forms of the non-trivial character can be calculated  by considering the degeneration of $S_{{\rm CD}}$ (see \cite{CD}, see also \cite{Na} Proposition 2.3).

 \subsection{Nonexistence of sandwich between $\mathcal{G}_1$ and $\mathcal{F}_1$}
 
The family of $K(t)$ ($S(t)$, resp.)  is a natural extension of that of ${\rm Kum}(\mathfrak{A})$ ($S_{{\rm CD}}$, resp.).  
 The argument in the last subsection guarantees that there is the sandwich when $t_{18}=0$.
 However, if $t_{18}\not=0$, 
 we have only one side 
 $$
   K(t) \dashrightarrow S(t)
 $$
 of the sandwich,
 because Theorem \ref{ThmKS} and the following theorem hold.
 
 \begin{thm}\label{ThmNonExistence}
 The parameter $t_{18}$ gives an obstruction to the existence of a double covering  $S(t)\dashrightarrow K(t)$ for the $K3$ surfaces (\ref{KE}) and (\ref{SE}).
  Namely, if $t_{18}\not =0$,
  there is no Nikulin involution on $S(t)$. 
 \end{thm}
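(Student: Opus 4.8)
The plan is to argue by contradiction using the lattice-theoretic criterion for the existence of Nikulin involutions, namely Lemma \ref{LemNikulin}(2): a $K3$ surface $\mathfrak{X}$ admits a Nikulin involution if and only if its transcendental lattice admits a primitive embedding ${\rm Tr}(\mathfrak{X})\hookrightarrow U\oplus U\oplus U\oplus E_8(-2)$. So suppose $t_{18}\neq 0$ for a generic $[t]\in T$; then by Proposition \ref{PropPartnerLattice}(1), ${\rm Tr}(S(t))\simeq A = U\oplus U\oplus A_2(-1)$. The claim is reduced to showing that $A$ does \emph{not} admit a primitive embedding into $U^{\oplus 3}\oplus E_8(-2)$.

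First I would compute the relevant discriminant-form invariants. The lattice $N:=U^{\oplus 3}\oplus E_8(-2)$ has signature $(3,11)$ and discriminant group $\mathscr{A}_N\simeq (\mathbb{Z}/2\mathbb{Z})^8$ with discriminant form $q_N = q_{E_8(-2)}$, all of whose values lie in $\mathbb{Z}$ modulo $2\mathbb{Z}$ (equivalently, $q_{E_8(2)}$ takes only even integer values after scaling; the point is that $N$ is ``$2$-elementary''). On the other hand ${\rm Tr}(S(t))\simeq A$ has $\mathscr{A}_A\simeq \mathbb{Z}/3\mathbb{Z}$ by Lemma \ref{LemQDiscr}(1), generated by an element of order $3$. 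The key obstruction is a $3$-torsion/$2$-torsion incompatibility: if $A\hookrightarrow N$ were primitive with orthogonal complement $R$, then by Nikulin's gluing theory (\cite{NikQuad} Proposition 1.15.1, already in the circle of results cited via Proposition \ref{PropLatticePerp}) there would be a subgroup $H\subset \mathscr{A}_A\oplus \mathscr{A}_R$, isotropic for $q_A\oplus q_R$, with $H$ projecting injectively into each factor and $\mathscr{A}_N\simeq H^\perp/H$. The order-$3$ part of $\mathscr{A}_A$ must then be matched inside $\mathscr{A}_R$, so $3\mid |\mathscr{A}_R|$, and the $3$-primary components of $q_N$ must be recovered from $q_A$ and $q_R$; but $\mathscr{A}_N$ has no $3$-torsion at all. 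Chasing this through the $3$-primary part of the glue formula forces the order-$3$ generators of $q_A$ and $q_R$ to cancel, which pins down the $3$-part of $q_R$ to be $-q_{A_2(-1)}$, i.e.\ $\mathbb{Z}/3\mathbb{Z}$ with form $(1/3)$; I would then show this is incompatible with the signature and rank constraints on $R$ (${\rm rank}\,R = 8$, signature $(1,7)$) together with the requirement that the $2$-part of $q_R$ glue to give exactly $(\mathbb{Z}/2\mathbb{Z})^8$ — the latter forces $R$ to be ``too $2$-elementary'' to also carry the needed $\mathbb{Z}/3\mathbb{Z}$ summand within rank $8$.

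An alternative, and probably cleaner, route I would pursue in parallel: use the already-established Theorem \ref{ThmTrK(t)}(1) and (\ref{TransViaD}) of Lemma \ref{LemNikulin}. If a Nikulin involution $\iota$ on $S(t)$ existed, its quotient $\mathfrak{Y}=\widetilde{S(t)/\langle\iota\rangle}$ would have ${\rm Tr}(\mathfrak{Y})\simeq \big(({\rm Tr}(S(t))\otimes\mathbb{Q})\cap (U^{\oplus 3}\oplus \tfrac12 E_8(-2))\big)(2)$, a lattice of rank $6$ that is $2$-divisible as an even lattice — in particular every value of its form is divisible by $2$, so its discriminant group is annihilated by consistency constraints forcing $2\mid$ everything. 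But one checks directly that $A = U\oplus U\oplus A_2(-1)$ contains no finite-index sublattice $A'$ with $A'(1/2)$ even (the $A_2(-1)$ summand obstructs this: halving would require $A_2(-1)$ to be twice an even lattice, impossible since $\det A_2(-1)=3$ is odd). This contradiction is arguably the quickest to write down rigorously.

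The main obstacle will be organizing the discriminant-form bookkeeping so that the argument is genuinely a clean parity/divisibility obstruction rather than a long case analysis over possible orthogonal complements. I expect the cleanest presentation to isolate the statement ``$A$ admits no primitive embedding into $U^{\oplus 3}\oplus E_8(-2)$'' as a lemma, prove it via the odd determinant of $A_2(-1)$ versus the $2$-elementary nature of $E_8(-2)$ (the non-trivial input being that any primitive embedding of $A$ would force a primitive embedding of $A_2(-1)$ into something $2$-elementary of rank $\le 8$, whose orthogonal complement then cannot have the required $3$-torsion in its discriminant group while staying within the allotted rank), and then deduce Theorem \ref{ThmNonExistence} immediately from Lemma \ref{LemNikulin}(2). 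Once that lemma is in place, Theorem \ref{ThmKS}(3) already supplies the one surviving arrow $K(t)\dashrightarrow S(t)$, so the ``only one side'' entry of Table 1 is fully justified.
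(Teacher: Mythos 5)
Your reduction of the theorem to the claim that $A=U\oplus U\oplus A_2(-1)$ admits no primitive embedding into $N=U^{\oplus3}\oplus E_8(-2)$ (via Lemma \ref{LemNikulin}(2)) is exactly where the paper starts, but neither of your two routes actually closes the argument, and the paper's own proof is different and far more elementary: it splits off the two hyperbolic summands to reduce to an embedding of $A_2(-1)$ into $U\oplus E_8(-2)$, writes a would-be image of a generator as $\nu_j=l_je+m_jf+\mu_j$ with $\mu_j\in E_8(-2)$, observes that $\mu_j^2\in4\mathbb{Z}$ together with $\nu_j^2=-2$ forces $l_j,m_j$ both odd, and then derives a parity contradiction from $(\nu_1\cdot\nu_2)=l_1m_2+l_2m_1+(\mu_1\cdot\mu_2)\equiv0\pmod2$ against the required value $1$. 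No discriminant-form machinery is used.

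The gap in your first route is at its final step. The gluing bookkeeping correctly pins down the orthogonal complement $R$: signature $(1,7)$, $(q_R)_2\simeq q_{E_8(-2)}$ on $(\mathbb{Z}/2\mathbb{Z})^8$, $(q_R)_3\simeq\langle 2/3\rangle$ on $\mathbb{Z}/3\mathbb{Z}$. But the reason you give for nonexistence of such an $R$ --- that it cannot carry the $\mathbb{Z}/3\mathbb{Z}$ summand ``within the allotted rank'' --- is false: $l(\mathscr{A}_R)=\max(8,1)=8={\rm rank}\,R$, so the rank/length inequality is satisfied, and the naive Milgram test $\mathrm{sign}(q_R)\equiv0+2\equiv1-7\pmod 8$ also passes. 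The true obstruction is subtler: because $l((\mathscr{A}_R)_2)={\rm rank}\,R$ and $q_{E_8(-2)}$ is integer-valued, $R$ is forced to be $R'(2)$ for an even lattice $R'$ of determinant $\pm3$ and signature $(1,7)$; unscaling multiplies the $3$-part by $2^{-1}\equiv2\pmod3$, so $q_{R'}\simeq\langle-2/3\rangle$ has signature $\equiv-2\pmod8$, which contradicts $1-7=-6$. That computation (or the equivalent refined clause of Nikulin's existence theorem) is the missing idea, and without it the route does not conclude. Your second route is broken outright: the lattice in (\ref{TransViaD}) is $L'(2)$ for an \emph{overlattice} $L'$ of ${\rm Tr}(S(t))$, not a sublattice, and nothing requires ${\rm Tr}(S(t))$ itself to be twice an even lattice --- indeed $A(2)=U(2)^{\oplus2}\oplus A_2(-2)$ is a perfectly good even lattice (it is ${\rm Tr}(K(t))$), so no contradiction arises from the asserted $2$-divisibility, and the oddness of $\det A_2(-1)$ plays no role.
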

 
 \begin{proof}
 In order to apply the results of Section 3.1,
 we assume that there is a primitive embedding
 \begin{align} \label{iPrimEmb}
 i : U \oplus U\oplus A_2(-1) \hookrightarrow U \oplus U\oplus U\oplus E_8(-2).
 \end{align}
 Now,
 this direct summand $E_8(-2)$ should be a primitive sublattice of $E_8(-1) \oplus E_8(-1) (\subset L_{K3})$:
 \begin{align}\label{E8(-2)}
 E_8(-2) \hookrightarrow E_8(-1) \oplus E_8(-1).
 \end{align}
So, we have a primitive embedding 
 \begin{align*}
 i': U \oplus U\oplus A_2(-1) \hookrightarrow L_{K3}=II_{3,19}
 \end{align*}
 as the composition of (\ref{iPrimEmb}) and (\ref{E8(-2)}).
 The orthogonal complement of $U\oplus U$ in $II_{3,19}$ is isometric to $U\oplus E_8(-1)\oplus E_8(-1)$,
 because it is an even unimodular lattice of signature $(1,17)$.
Hence, 
 we can regard  the direct summand $U\oplus U$ of $L_{K3}$ 
 as the image of $i'|_{U\oplus U}$.
 Thus, 
 we can suppose that the embedding $i$ satisfies 
 \begin{align}\label{A2UE8}
 i(A_2(-1)) \subset U \oplus E_8(-2).
 \end{align}
 Now, let $\{e,f\}$ be a  basis of $U$ such that
 $(e\cdot e)=(f\cdot f)=0$ and $(e\cdot f)=1$.
 Also, let $\{\nu_1,\nu_2\}$ be a  basis of $A_2(-1)$ satisfying $(\nu_1\cdot \nu_1)=(\nu_2 \cdot \nu_2)=-2$ and $(\nu_1 \cdot \nu_2)=1$.
If (\ref{A2UE8}) holds,
we have an expression
$$
i (\nu_j)=l_j e +m_j f + \mu_j
$$
 for $j=1,2$,
 where $l_j, m_j \in \mathbb{Z}$ and $\mu_j \in E_8(-2)$.
 Then,
 we have
 $$
 -2=(i(\nu_1)\cdot i(\nu_1))=2 l_1 m_1 +(\mu_1\cdot \mu_1).
 $$
 Since self-intersection numbers of $E_8(-2)$ are in $4\mathbb{Z}$,
 it follows that both $l_1$ and $m_1$ are  odd numbers.
 Similarly, both of $l_2$ and $m_2$ are  odd numbers.
 On the other hand,
 we obtain
 $$
 1=(i(\nu_1)\cdot i(\nu_2))= l_1 m_2 + l_2 m_1 +(\mu_1\cdot \mu_2).
 $$
 Here, $(\mu_1\cdot \mu_2)\in 2\mathbb{Z}$, because $\mu_j \in E_8(-2)$.
 Hence, $l_1m_2 + l_2 m_1 $ is an odd number.
 This is a contradiction.
 Therefore, there is no primitive embedding (\ref{iPrimEmb}).
 So, by
 Lemma \ref{LemNikulin},
 there is no Nikulin involution on the $K3$ surface (\ref{S(t)}). 
 \end{proof}

 \subsection{Sandwich phenomenon for the families of \cite{CMS} and \cite{MSY}}

 As a by-product of our explicit expression of Nikulin involutions,
 we can obtain a  handy description of a sandwich phenomenon for another families of  $K3$ surfaces.
 
 Clingher, Malmendier and Shaska \cite{CMS} obtains another natural extension of the story for the $K3$ surfaces ${\rm Kum}(\mathfrak{A})$ and $S_{{\rm CD}}$.
 They consider the family of lattice polarized $K3$ surfaces $S_{{\rm CMS}}$
with the  transcendental lattice 
$U\oplus U\oplus A_1(-1) \oplus A_1(-1).$
 Also, they study the relation between $S_{{\rm CMS}}$
 and $K_{{\rm MSY}}$,
 where $K_{{\rm MSY}}$ is the $K3$ surfaces with the transcendental lattice
 $U(2)\oplus U(2)\oplus A_1(-1) \oplus A_1(-1)$.
 The family  of $K_{{\rm MSY}}$ is  precisely studied by Matsumoto, Sasaki and Yoshida \cite{MSY} 
 on the basis of the viewpoint of the double covering of $\mathbb{P}^2(\mathbb{C})$ branched along six lines and hypergeometric differential equations. 
 In \cite{CMS},
 they show that there is a van Geemen-Sarti involution on $S_{{\rm CMS}}$ and the corresponding double covering $S_{{\rm CMS}} \dashrightarrow K_{{\rm MSY}}.$
They call the $K3$ surface $S_{{\rm CMS}}$  the  van Geemen-Sarti partner of $K_{{\rm MSY}}$.

In this section, we use the Weierstrass equation of the $K3$ surfaces $S_{{\rm CMS}}$  given in \cite{NU}:
 \begin{align}\label{NU}
 z^2= y^3 + (u_{5,3} x^5 + u_{4,4} x^4 + u_{3,5} x^3) y+ (u_{7,5} x^7 + u_{6,6} x^6 +u_{5,7} x^5),
 \end{align}
 Set
 \begin{align*}
 t_4 = u_{4,4}, \quad t_6=u_{6,6}, \quad t_8=u_{5,3} u_{3,5}, \quad t_{10}=u_{5,3}u_{5,7}+u_{3,5} u_{7,5}, \quad t_{12}=u_{5,7}u_{7,5}.
 \end{align*}
  Via the inverse of the period mapping for the family of (\ref{NU}),
  $t_4,t_6,t_8,t_{10}$ and $t_{12}$
  give modular forms on the $4$-dimensional symmetric domain $\mathcal{D}$ of type $IV$
  for the orthogonal group $O(2,4;\mathbb{Z})$.
  Also, there are $3$  modular forms for $O(2,4;\mathbb{Z})$ of the $3$ non-trivial characters (see \cite{NU} Theorem 1.1).
Now,  remark that
  the $K3$ surface $S_{{\rm CMS}}$ given by (\ref{NU}) degenerates to the $K3$ surface $S_{{\rm CD}}$ 
  if  $u_{5,3}=0$ and $u_{7,5}=1$.
  Thus, the family of $S_{{\rm CMS}}$,
  which is characterized by the connection with the configuration of six lines in $\mathbb{P}^2(\mathbb{C})$ and modular forms for $O(2,4;\mathbb{Z})$,
   gives another natural extension of the family of $S_{{\rm CD}}$.

 By the birational transformation
 $(x,y,z)=(\frac{Y}{u_{5,3} X+u_{7,5}},\frac{X Y^2}{(u_{5,3} X +u_{7,5})^2},\frac{Y^3 Z}{(u_{5,3} X+u_{7,5})^3}),$
the equation (\ref{NU}) is transformed to
\begin{align}\label{SCMS}
S_{{\rm CMS}}: Z^2=Y^3+u_{4,4} Y +u_{6,6} +\Big(  X + \frac{u_{5,3} u_{3,5} Y^2 + (u_{5,3} u_{5,7}+u_{3,5} u_{7,5}) Y +u_{5,7}u_{7,5}}{X}\Big). 
\end{align}
There is a Nikulin involution
 \begin{align}\label{iotaCMS}
 \iota_{\psi,{\rm CMS}}: (X,Y,Z)\mapsto \Big(\frac{u_{5,3} u_{3,5} Y^2 + (u_{5,3} u_{5,7}+u_{3,5} u_{7,5}) Y +u_{5,7}u_{7,5}}{X} ,Y,-Z\Big)
 \end{align}
 on the $K3$ surface (\ref{SCMS}).
 Note that this is equal to the Nikulin involution
 which appears in \cite{CMS}.
 As in the proof of Theorem \ref{ThmMaSand},
 the Nikulin involution $\iota_{\psi,{\rm CMS}}$
 induces a double covering
 $
\psi_{{\rm CMS}}: S_{{\rm CMS}} \dashrightarrow K_{{\rm MSY}},
 $
 where $K_{{\rm MSY}}$ is a $K3$ surface given by
 \begin{align}\label{KCMS}
 K_{{\rm MSY}}:Z^2=Y^3+u_{4,4} Y +u_{6,6} +\Big(  U^2 + \frac{u_{5,3} u_{3,5} Y^2 + (u_{5,3} u_{5,7}+u_{3,5} u_{7,5}) Y +u_{5,7}u_{7,5}}{U^2}\Big).  
 \end{align}
 The transcendental lattice of $K_{{\rm MSY}}$ is given by $U(2)\oplus U(2)\oplus A_1(-1) \oplus A_1(-1).$
 If $u_{5,3}=0$ and $u_{7,5}=1$, $K_{{\rm MSY}}$  degenerates to ${\rm Kum}(\mathfrak{A})$ of (\ref{Kum(A)}).
 So, we can regard the family of $K_{{\rm MSY}}$ as another extension of  $\mathcal{G}_{\rm Kum}$.
 There is a Nikulin involution
 \begin{align}\label{iotaMSY}
 \iota_{\varphi,{\rm MSY}}: (U,Y,Z)\mapsto  (-U,Y,Z)
 \end{align}
 which induces a double covering $\varphi_{{\rm MSY}}: K_{{\rm MSY}}\dashrightarrow S_{{\rm CMS}}$.
 Summarizing this argument, we have the following theorem.
 
 \begin{thm}\label{ThmMSYCMS}
 One has the sandwich for $K_{{\rm MSY}} $ of (\ref{SCMS}) and $S_{{\rm CMS}}$ of (\ref{KCMS}):
 \begin{align*}
K_{{\rm MSY}} \overset{\varphi_{{\rm MSY}}}\dashrightarrow S_{{\rm CMS}} \overset{\psi_{{\rm CMS}}}\dashrightarrow K_{{\rm MSY}}.
\end{align*}
Here, $\psi_{\rm CMS}$  ($\varphi_{\rm MSY}$ , resp.) is the double covering given by the involution (\ref{iotaCMS}) ((\ref{iotaMSY}), resp.).
 \end{thm}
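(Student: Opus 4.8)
The plan is to run the computation of the proof of Theorem~\ref{ThmMaSand} once more, now with the linear polynomial $t_{10}Y+t_{12}$ replaced by the quadratic numerator
$$
g(Y)=u_{5,3}u_{3,5}Y^2+(u_{5,3}u_{5,7}+u_{3,5}u_{7,5})Y+u_{5,7}u_{7,5}
$$
appearing in (\ref{SCMS}) and (\ref{KCMS}). First I would record that both relevant involutions are symplectic. For $\iota_{\varphi,{\rm MSY}}$ on (\ref{KCMS}) this is literally Theorem~\ref{ThmKS}(3): the holomorphic $2$-form is $dY\wedge dU/(UZ)$ up to a constant, and $(U,Y,Z)\mapsto(-U,Y,Z)$ fixes it. For $\iota_{\psi,{\rm CMS}}$ on (\ref{SCMS}), whose holomorphic $2$-form is $dY\wedge dX/(XZ)$ up to a constant, the substitution $X\mapsto g(Y)/X$, $Z\mapsto-Z$ sends $dX\wedge dY$ to $-\bigl(g(Y)/X^2\bigr)\,dX\wedge dY$ (the $g'(Y)\,dY/X$ summand of $d(g(Y)/X)$ dies against $dY$) and $1/(XZ)$ to $-X/(g(Y)Z)$, so the form is preserved. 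Hence, after resolving the eight resulting ordinary double points, both quotient surfaces are again $K3$ surfaces by the theory of Nikulin involutions recalled in Section~3.1, and it remains only to identify these resolutions explicitly.

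For $\varphi_{\rm MSY}$ this is immediate: the substitution $X=U^2$ turns (\ref{KCMS}) literally into (\ref{SCMS}), so the quotient map $(U,Y,Z)\mapsto(X,Y,Z)=(U^2,Y,Z)$ realizes the double covering $K_{\rm MSY}\dashrightarrow S_{\rm CMS}$, word for word as in Theorem~\ref{ThmKS}(3), with deck involution $\iota_{\varphi,{\rm MSY}}$.

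The substantive step is $\psi_{\rm CMS}$. Here I would pass to the $\iota_{\psi,{\rm CMS}}$-invariant functions $U_0=X+g(Y)/X$ and $V_0=\tfrac1Z\bigl(X-g(Y)/X\bigr)$. Since $X$ and $g(Y)/X$ are the two roots of $T^2-U_0T+g(Y)=0$, one has $(X-g(Y)/X)^2=U_0^2-4g(Y)$, i.e. $V_0^2Z^2=U_0^2-4g(Y)$; combining this with $Z^2=Y^3+u_{4,4}Y+u_{6,6}+U_0$ read off from (\ref{SCMS}) and eliminating $Z$ yields the quotient surface as
$$
U_0V_0^2-U_0^2+V_0^2\bigl(Y^3+u_{4,4}Y+u_{6,6}\bigr)+4g(Y)=0,
$$
which is the evident analogue of (\ref{EqU0V0}). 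The birational change $(U_0,V_0)=(2U(U+Z),2U)$ then transforms this equation into
$$
Z^2=Y^3+u_{4,4}Y+u_{6,6}+U^2+\frac{g(Y)}{U^2},
$$
that is, precisely (\ref{KCMS}); so $\iota_{\psi,{\rm CMS}}$ induces the double covering $S_{\rm CMS}\dashrightarrow K_{\rm MSY}$. Composing the two coverings gives the asserted sandwich, and for completeness I would note that the transcendental lattices then come out as $U\oplus U\oplus A_1(-1)^{\oplus 2}$ for $S_{\rm CMS}$ and $U(2)\oplus U(2)\oplus A_1(-1)^{\oplus 2}$ for $K_{\rm MSY}$, consistently with \cite{MSY}, \cite{CMS}, \cite{NU} and with Lemma~\ref{LemNikulin}.

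The main obstacle is the bookkeeping in the last step: one must check that the birational map $(U_0,V_0)=(2U(U+Z),2U)$ introduces no extra components and that the eight exceptional curves of the resolution fit the Weierstrass model (\ref{KCMS}), so that the resulting map of function fields really is the degree-two quotient map. Since this is the exact counterpart of the $t_{18}=0$ computation already carried out in the proof of Theorem~\ref{ThmMaSand} (with $g(Y)$ in place of $t_{10}Y+t_{12}$), I do not expect any essentially new difficulty.
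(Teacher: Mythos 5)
Your proposal is correct and follows essentially the same route as the paper: the paper's argument for Theorem \ref{ThmMSYCMS} is precisely to repeat the proof of Theorem \ref{ThmMaSand} with the quadratic numerator $g(Y)$ in place of $t_{10}Y+t_{12}$, i.e.\ the substitution $X=U^2$ for $\varphi_{\rm MSY}$ and the passage to the invariants $U_0=X+g(Y)/X$, $V_0=\tfrac1Z(X-g(Y)/X)$ followed by $(U_0,V_0)=(2U(U+Z),2U)$ for $\psi_{\rm CMS}$. Your direct verification that $\iota_{\psi,{\rm CMS}}$ preserves the holomorphic $2$-form is a small addition the paper handles by citing \cite{CMS}, but it does not change the approach.
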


 \section*{Acknowledgement}
 The first author  is supported by JSPS Grant-in-Aid for Scientific Research (18K13383)
and 
MEXT LEADER.
The second author is supported by JSPS Grant-in-Aid for Scientific Research (19K03396).
The authors are thankful to Professor Jiro Sekiguchi for valuable discussions about the surface $\Sigma(t)$ in Section 4 from the view point of complex reflection groups,
and the anonymous referee for suggestions for improvement.

{\small

}

\vspace{3mm}

\begin{center}
\hspace{7.7cm}\textit{Atsuhira  Nagano}\\
\hspace{7.7cm}\textit{Faculty of Mathematics and Physics}\\
 \hspace{7.7cm} \textit{Institute of Science and Engineering}\\
\hspace{7.7cm}\textit{Kanazawa University}\\
\hspace{7.7cm}\textit{Kakuma, Kanazawa, Ishikawa}\\
\hspace{7.7cm}\textit{920-1192, Japan}\\
 \hspace{7.7cm}\textit{(E-mail: atsuhira.nagano@gmail.com)}
  \end{center}

\vspace{1mm}

\begin{center}
\hspace{7.7cm}\textit{Hironori  Shiga}\\
\hspace{7.7cm}\textit{Graduate School of Science }\\
\hspace{7.7cm}\textit{Chiba University}\\
\hspace{7.7cm}\textit{Yayoi-cho 1-33, Inage-ku, Chiba}\\
\hspace{7.7cm}\textit{263-8522, Japan}\\
 \hspace{7.7cm}\textit{(E-mail: pigrohironoris@gmail.com)}
  \end{center}

\end{document}